\numberwithin{equation}{section}
\definecolor{qqwuqq}{rgb}{0,0,0}
\begin{document}

\date{\today}

\title[On the domain of Dirac and Laplace Operators on Stratified Spaces]
{On the domain of Dirac and Laplace \\ type Operators on Stratified Spaces}

\author{Luiz Hartmann}
\address{Universidade Federal de S\~ao Carlos (UFSCar),
	Brazil}
\email{hartmann@dm.ufscar.br}
\urladdr{http://www.dm.ufscar.br/profs/hartmann}

\author{Matthias Lesch}
\address{Universit\"at Bonn,
	Germany}
\email{ml@matthiaslesch.de, lesch@math.uni-bonn.de}
\urladdr{www.matthiaslesch.de, www.math.uni-bonn.de/people/lesch}

\author{Boris Vertman} 
\address{Universit\"at Oldenburg, Germany} 
\email{boris.vertman@uni-oldenburg.de}

\thanks{Partial support by DFG Priority Programme "Geometry at Infinity", 
FAPESP: 2016/16949-8 and the Hausdorff Center 
for Mathematics, Bonn}

\subjclass[2010]{Primary 35J75; Secondary 58J52.}
\keywords{stratified spaces, iterated cone-edge metrics, minimal domain}

\begin{abstract}
We consider a generalized Dirac operator
on a compact stratified space with an iterated 
cone-edge metric. Assuming a spectral Witt condition, we prove its essential 
self-adjointness and identify its domain and the domain of its square with 
weighted edge Sobolev spaces. This sharpens 
previous results where the minimal domain is shown only to be a subset of an intersection 
of weighted edge Sobolev spaces. Our argument does not rely on microlocal techniques and 
is very explicit. The novelty of our approach is the use of an abstract functional analytic notion 
of interpolation scales. Our results hold for the Gauss-Bonnet and spin Dirac operators satisfying a 
spectral Witt condition.
\end{abstract}

\maketitle

\tableofcontents

\section{Introduction and statement of the main results}

Singular spaces arise naturally in various parts of mathematics.
Important examples of singular spaces include algebraic varieties and
various moduli spaces; singular spaces also appear naturally as
compactifications of smooth spaces or as limits of families of smooth
spaces under controlled degenerations. The development of analytic
techniques to study partial differential equations in the singular
setting is a central issue in modern geometry. 

\begin{figure}[h]
	\includegraphics[scale=0.75]{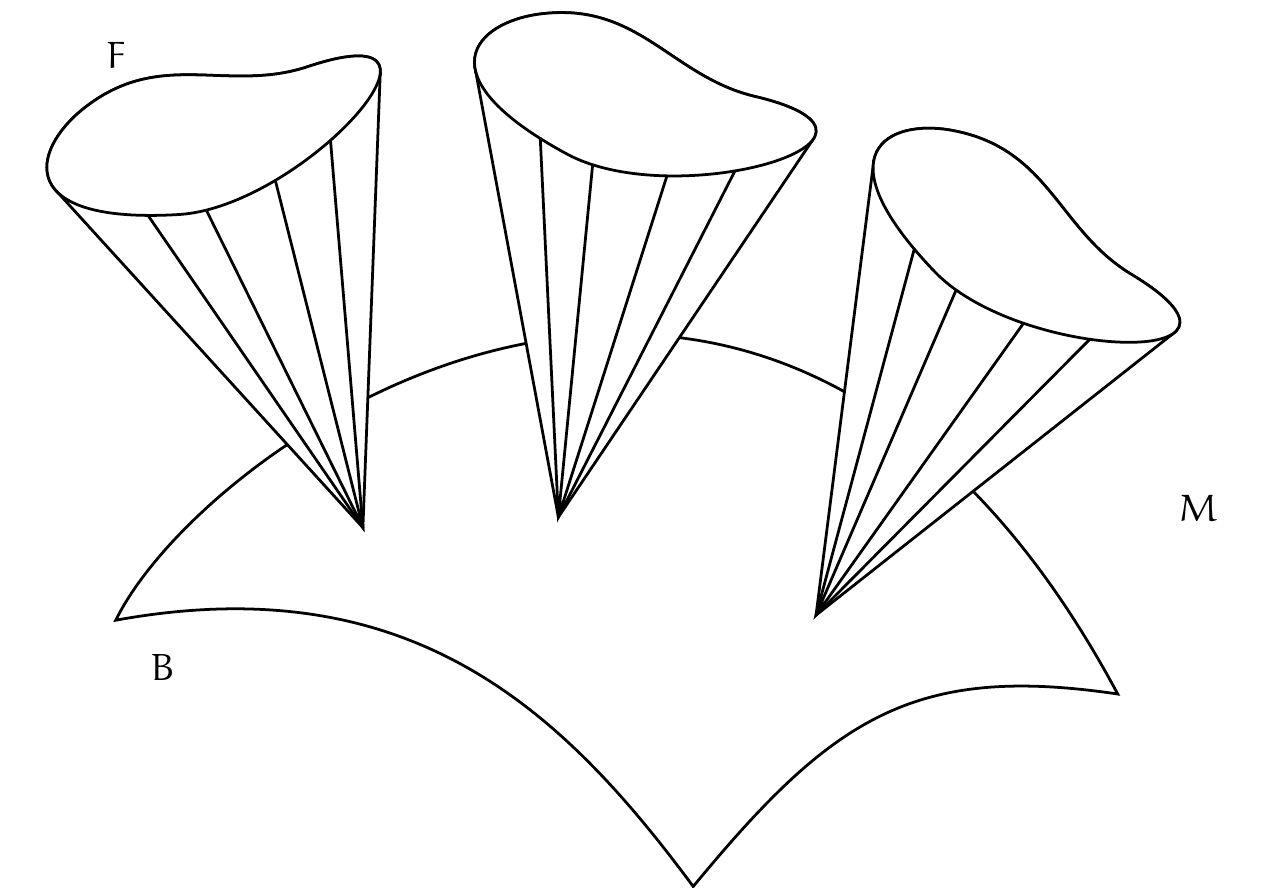}
	\caption{Simple Edge as a Cone bundle over $B$.}
	\label{figure1}
\end{figure}

Cheeger \cite{Che:SGS} was the first to initiate an influential program on
spectral analysis on smoothly stratified spaces with singular
Riemannian metrics.  Analysis of the associated geometric operators on
spaces with conical singularities was the focal point of
the research by Br\"uning and Seeley \cite{BruSee:RSA,
BruSee:TRE,BruSee}, Lesch \cite{Les:OOF}, Melrose \cite{Mel:TAP},
Schulze \cite{Schulze-cone1, Schulze-cone2}, Schrohe and Schulze \cite{Schulze-Schrohe1, 
Schulze-Schrohe2}, Gil, Krainer and Mendoza \cite{Krainer-cone, Krainer-cone2} to name just a few.  

Extensions to spaces with simple edge
singularities were developed by Mazzeo \cite{Maz:ETO}, as well as
Schulze \cite{Schulze, Schulze2} and his collaborators, see also 
Gil, Krainer and Mendoza \cite{Krainer-edge}. Various questions in spectral geometry and index theory 
on spaces with simple edge singularities have been addressed \eg 
by Br\"uning and Seeley \cite{BruSee}, Mazzeo and Vertman
\cite{MazVer, MaVe1}, Krainer and Mendoza \cite{Krainer-Mendoza, Krainer-Mendoza2}, 
Albin and Gell-Redman \cite{Albin-Jesse}, Piazza and Vertman \cite{PiVe}.

There have also been recent advances to lift the analysis to a very general setting of stratified spaces 
with iterated cone-edge singularities. Index theoretic questions for geometric Dirac operators on a general 
class of compact stratified Witt spaces with iterated cone-edge metrics have 
been studied 
by Albin, Leichtnam, Piazza and Mazzeo in \cite{ALMP1,ALMP2,Novikov}. The Yamabe problem on stratified spaces has been
solved by Akutagawa, Carron and Mazzeo in \cite{ACM}. 

If we wish to go a step further and do spectral geometry on stratified spaces, the crucial difficulty appears 
already in the setting of a stratified space of depth two, illustrated as in 
Figure \ref{figure2} below
with fibers $F_y$, at each $y \in B=Y_2$, being simple edge spaces. 
Consider \eg the family of Gauss--Bonnet operators on the fibers $F_y, y \in 
B$. 
Even if we impose a spectral Witt condition so that 
the Gauss--Bonnet 
operators on the fibers are essentially self-adjoint, their domains may 
still vary with the base point across $B$.
In case of variable domains however, smoothness of the operator 
family becomes a much more complicated issue, which needs to be resolved 
before any meaningful spectral geometric questions may be addressed. 

Our main result is formulated using the concept of a spectral Witt condition and the weighted
edge Sobolev space $\sH^{1,1}_e(M)$ on a stratified Witt space $M$ with an 
iterated 
cone-edge metric, which will be made explicit below. Elements of the edge 
Sobolev 
spaces take values in a Hermitian vector bundle $E$, which is suppressed from the notation.

For the moment, the spectral Witt condition is a spectral gap condition 
on certain operators on fibers $F$, see \Eqref{Witt-condition} and Definition 
\ref{Witt-stratified}, and in case 
of the 
Gauss--Bonnet operator on 
a stratified Witt space it can 
always be achieved by scaling the iterated cone-edge metric appropriately. The 
weighted edge 
Sobolev space $\sH^{s,\delta}_e(M)= \rho^\delta \sH^s_e(M)$ is the Sobolev 
space 
$\sH^s_e(M)$ of  all square integrable sections of the 
Hermitian vector bundle $E$ that remain square
integrable under weak application of $s\in \N$ edge vector fields, weighted with 
a $\delta$-th power of a smooth function $\rho$ that vanishes at
the singular strata to first order. Our main theorem is now as follows.

\begin{theorem}\label{main-intro}
	Let $M$ be a compact stratified space with an iterated cone-edge metric. 
	 Let $D$ denote either the Gauss--Bonnet or the spin Dirac operator, 
	 and assume the spectral Witt condition holds,\ie Definition 
	 \ref{Witt-stratified}. 
	 Then both $D$ and 
	 $D^2$ are essentially self-adjoint with domains
	\begin{equation}
	\begin{split}
	&\sD_{\max}(D) = \sD_{\min}(D) = \sH^{1,1}_e(M), \\
	&\sD_{\max}(D^2) = \sD_{\min}(D^2) = \sH^{2,2}_e(M).
	\end{split}
	\end{equation}
In case of the Gauss--Bonnet operator, sections take values in the exterior 
algebra $\gL^* ({}^{ie}T^*M)$
of the incomplete edge cotangent space $\gL^* ({}^{ie}T^*M)$. In case of the 
spin Dirac operator, sections
take values in the spinor bundle $S$.
\end{theorem}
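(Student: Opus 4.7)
The plan is to prove the theorem by induction on the depth of the stratification. The base case of a smooth compact manifold is classical elliptic theory, giving $\sD_{\max}(D)=\sD_{\min}(D)=\sH^1(M)$ and similarly for $D^2$. Assume the statement holds for all compact stratified Witt spaces of depth strictly less than that of $M$. Cover $M$ by the regular open set $M^{\mathrm{reg}}$ together with tubular neighbourhoods of each singular stratum, and introduce a subordinate partition of unity. On $M^{\mathrm{reg}}$ the operator $D$ is uniformly elliptic and standard interior regularity identifies the contribution with $\sH^1_{\mathrm{loc}}$, so the real problem localises to a model computation near each singular stratum $Y \subset M$.

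Near such a stratum the tubular neighbourhood is diffeomorphic to a bundle over $Y$ with fibre a truncated cone $C(F)=[0,1)\times F/(\{0\}\times F)$, where $F$ is itself a compact stratified Witt space of strictly smaller depth carrying an iterated cone-edge metric. In local coordinates $(y,r,z)$ with $y\in Y$, $r\in [0,1)$ and $z\in F$, the operator takes the edge form
\begin{equation*}
D \;=\; \tfrac{1}{r}\,\sigma\bigl(r\partial_r + D_F(y) + c\bigr) \;+\; D_Y \;+\; R,
\end{equation*}
with $\sigma$ a bundle isomorphism, $D_F(y)$ the induced Dirac-type operator on the fibre $F_y$, $D_Y$ a first-order operator involving base derivatives, and $R$ of lower edge order. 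By the inductive hypothesis each $D_F(y)$ is essentially self-adjoint with domain $\sH^{1,1}_e(F_y)$, and the spectral Witt condition guarantees a uniform gap of the fibre spectrum about the interval that would allow $L^2$ singular radial modes at $r=0$. Consequently the Mellin indicial family of $D$ is invertible on a strip containing the critical line.

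The central step is the identification $\sD_{\max}(D)=\sH^{1,1}_e(M)$. The inclusion $\sH^{1,1}_e(M)\subseteq \sD_{\max}(D)$ is a direct consequence of the mapping property $D:\sH^{s,\delta}_e\to\sH^{s-1,\delta-1}_e$ at $(s,\delta)=(1,1)$. For the reverse inclusion we invoke the abstract interpolation-scale machinery advertised in the abstract: organise the weighted edge Sobolev spaces into a scale indexed by $(s,\delta)$, show that on this scale $D$ acts as a regular shift morphism whose obstruction is governed by the indicial family, and verify that the spectral Witt condition together with the inductive statement about $D_F(y)$ produces a resolvent of the fibre operator which extends to a smooth family of bounded operators on the scale of Sobolev spaces along $F$. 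Combining this with a Mellin transform in $r$ and Fourier estimates in $y$ bootstraps any $u\in\sD_{\max}(D)$ from $L^2=\sH^{0,0}_e$ up to $\sH^{1,1}_e$. The statement for $D^2$ follows either by running the same machine one order higher, or, once the first-order result is in hand, by writing $\sD_{\max}(D^2)=\{u\in\sH^{1,1}_e(M):Du\in\sH^{1,1}_e(M)\}$ and reapplying the first-order conclusion. Essential self-adjointness of both $D$ and $D^2$ is then a consequence of $\sD_{\max}=\sD_{\min}$, which reduces to density of $C^\infty_c(M^{\mathrm{reg}})$ in $\sH^{1,1}_e(M)$ and $\sH^{2,2}_e(M)$; this density is the content of a cutoff-approximation argument in which the Witt gap is again decisive, as it rules out the small-radial-mode obstruction that would otherwise defeat truncation.

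The main obstacle, precisely as foreshadowed in the introduction, is that in a depth $\ge 2$ neighbourhood the fibre operators $D_F(y)$ have domains that in general vary with $y\in Y$, so one cannot proceed by treating $D$ as a smooth family of self-adjoint operators on a fixed Hilbert space. The interpolation-scale viewpoint is designed to bypass this: rather than tracking the pointwise domain of $D_F(y)$, one tracks integrated Sobolev norms along the fibre, and these depend smoothly on $y$ even when the domains themselves do not. Making this precise, and in particular establishing that the fibrewise resolvent is uniformly bounded on the scale independently of $y\in Y$, is the step I expect to absorb the bulk of the technical work; once that uniform control is secured, the remainder of the argument is an exercise in threading together the Mellin calculus in $r$, Fourier analysis in $y$, and the abstract scale formalism.
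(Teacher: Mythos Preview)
Your overall scaffold --- induction on depth, localisation to a tubular neighbourhood, reduction to a model operator of the form $\Gamma(\partial_x + X^{-1}S(y)) + T + V$ --- matches the paper's. But two points deserve correction.

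First, you have misdiagnosed the ``varying domains'' issue. You write that the fibre operators $D_F(y)$ have domains that vary with $y$, and propose to bypass this by tracking integrated Sobolev norms which ``depend smoothly on $y$ even when the domains themselves do not''. This is backwards. The inductive hypothesis is not merely that each $D_F(y)$ is essentially self-adjoint, but that its unique domain equals $\sH^{1,1}_e(F)$, a space defined purely metrically and hence \emph{independent} of $y$. Once this is part of the induction, the domain $\sD_S = \sH^{1,1}_e(F)$ of $S(y)$ is fixed, and the interpolation scales $H^s(S(y))$ coincide for $0\le s\le 1$ (and, after the $D^2$ step, for $0\le s\le 2$). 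The interpolation-scale formalism in the paper is not a workaround for varying domains; it is a device for identifying the abstract tensor-product Sobolev spaces $W^{s,s}(\R_+\times\R^b,H)$ with the concrete iterated edge spaces $\sH^{s,s}_e$, via Proposition~\ref{p.ISHS.12}. Your proposed workaround would not actually function: as Example~\ref{p.ISHS.5} shows, if the first-order domains differed, the higher interpolation scales would differ too.

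Second, the technical heart of your argument is a sketch rather than a proof, and it invokes machinery the paper deliberately avoids. You appeal to invertibility of a ``Mellin indicial family on a strip'' and then to ``Mellin transform in $r$'' plus Fourier estimates in $y$. The paper does none of this. Instead it Fourier transforms only in $y$, reducing $D_{y_0}$ to a $\xi$-family of Bessel-type operators $L(y_0,\xi)$ on $\R_+$; inverts these explicitly via kernels built from $I_\nu, K_\nu$; and proves the required $W^{0,0}\to W^{1,1}$ (resp.\ $W^{2,2}$) bounds by Schur's test, with constants uniform in $\xi$ precisely because the Witt condition forces $\nu_j\ge 3/2+\delta$. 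This is where the specific numerical gap $(-1,1)$ (rather than the usual $(-1/2,1/2)$) enters. From the parametrix $Q$ one gets $u = Q(D_{y_0}u)$ for $u\in\sD_{\rm comp}(D_{y_0,\max})$, hence the domain identification; a relative-bound argument then passes from $D_{y_0}$ to the full $D$. The $D^2$ statement is proved in parallel with its own parametrix $Q^2$, not by iterating the first-order result as you suggest --- your proposed identity $\sD_{\max}(D^2)=\{u\in\sH^{1,1}_e:Du\in\sH^{1,1}_e\}$ is not available a priori, since $\sD_{\max}(D^2)$ is defined distributionally, not via the closure of $D$.
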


Let us comment on related work in connection to Theorem \ref{main-intro}.
Gil, Krainer and Mendoza \cite[Theorem 4.2]{Krainer-edge} 
prove that for an elliptic differential wedge operator $A$ of order $m$ 
on a simple edge space, under an assumption on indicial roots, $\sD_{\min}(A)= \sH^{m,m}_e(M)$. Our theorem here extends
this statement to compact stratified spaces in the special case of the 
Gauss--Bonnet and Spin Dirac operators.
Moreover, Albin, Leichtnam, Piazza and Mazzeo in \cite[Prop.~5.9]{ALMP1} prove that under the spectral Witt condition 
the minimal domain $\sD_{\min}(D)$ of the Gauss--Bonnet operator is included in 
the intersection of 
$\sH^{1,\delta}_e(M)$ for all $\delta<1$. Our theorem here sharpens this statement into an equality instead of an inclusion.

In addition we emphasize that we employ different methods which are more elementary and do have a functional 
analytic flavor. Furthermore we also do not need singular pseudo-differential 
calculi.

\section{Smoothly stratified iterated edge spaces}\label{stratified-edge-spaces-section}

In this section we recall basic aspects of the definition of a compact smoothly stratified space 
of depth $k\in \N_0$, referring the reader for a complete discussion \eg  
to a very thorough analysis in \cite{ALMP1,ALMP2,Alb}. 

\subsection{Smoothly stratified iterated edge spaces of depth zero and one}

A compact stratified space of  depth $k=0$ is simply a compact Riemannian manifold.
A compact stratified space of depth $k=1$ is a compact simple 
edge space $\overline{M}$ with smooth open interior $M$, as discussed in \eg in \cite{Maz:ETO,MaVe1}. More precisely, $\overline{M}$ 
admits a single stratum $B\subset \overline{M}$ which is a smooth compact manifold. The 
edge $B$ comes with an open tubular neighborhood $\cU \subset \overline{M}$, a radial function  
$x$ defined on $\cU$, and a smooth fibration $\phi:\cU \to B$ with preimages 
$\phi^{-1}(q)\setminus\{q\}$, $q\in B$, being all diffeomorphic to 
open cones $C(F)=(0,1)\times F$ over a smooth compact manifold 
$F$. The restriction $x$ to each fiber $\phi^{-1}(q)$ is a radial function on that cone.
We also write $\phi: \b\cU\to B$ for the 
fibration of the $\{x=1\}$ level set over $B$. The tubular neighborhood $\cU 
\subset \overline{M}$ is illustrated in the Figure \ref{figure1}. 

The resolution $\widetilde{M}$ is defined by replacing the cones in the 
tubular neighborhood $\cU$ by 
finite cylinders $[0,1) \times F$. This defines a compact manifold with smooth boundary 
$\partial \widetilde{M}$ given by the total space of the fibration $\phi$. The resolution $\widetilde{\cU}$ of the singular neighborhood $\cU$ is defined 
analogously.

We equip the simple edge space with an edge metric $g$, which is smooth on 
$\overline{M}\setminus \cU$ and which over $\cU \backslash B$ takes the following form
\begin{equation}
g|_{\cU} = dx^2 + \phi^{\ast}g_B + x^2 g_F + h =: g_0 + h
\end{equation}
where $g_B$ is a Riemannian metric on $B$, $g_F$ is a 
smooth family of bilinear forms on the tangent bundle of the total space of the fibration $\phi:\b 
\cU \to B$, restricting to a Riemannian metric on fibers $F$, $h$ is smooth on 
$\widetilde{\cU}$ and 
$|h|_{g_0} = O(x)$, when $x\to 0$. We also require that $\phi: (\b\cU, g_F + 
\phi^{\ast}g_B)
\to (B,g_B)$ is a Riemannian submersion. 

Consider local coordinates $(x,y,\theta)$ on $\cU\backslash B \subset M$ near the edge, 
where $x$ is as before the radial coordinate, $y$ is
the lift of a local coordinate system on $B$ and $\theta$ restricts to local 
coordinates on each fiber $F$. Then, in terms of symmetric $2$-tensors 
$\textup{Sym}^2\{dx, x d\theta, dy\}$, generated by the $1$-tensors 
$\{dx, x d\theta, dy\}$, the higher order term $h$ satisfies over $\widetilde{\cU}$
\begin{equation}
h \in x \cdot C^\infty(\widetilde{\cU}, \textup{Sym}^2\{dx, x d\theta, dy\}).
\end{equation}

We finish with the standard definition of \emph{edge vector fields}.
The edge vector fields $\mathcal{V}_{e,1}$ are defined to be 
smooth on $\widetilde{M}$ and tangent to the fibers $F$ at $\partial 
\widetilde{M}$. 
We also write $\mathcal{V}_{ie,1} := x^{-1} \mathcal{V}_{e,1}$, which we call the \emph{incomplete edge}
vector fields. In the chosen local coordinate system $(x,y,\theta)$ we have explicitly
\begin{equation}
\begin{split}
&\mathcal{V}_{e,1}\restriction \widetilde{\cU} = C^\infty(\widetilde{\cU})\textup{- span}\, 
\{x\partial_x, x\partial_{y_1}, ..., x\partial_{y_{\dim B}}, \partial_{\theta_1},..., 
\partial_{\theta_{\dim F}}\}, \\
&\mathcal{V}_{ie,1}\restriction \widetilde{\cU} = C^\infty(\widetilde{\cU})\textup{- span}\, 
\{\partial_x, \partial_{y_1}, ..., \partial_{y_{\dim B}}, x^{-1}\partial_{\theta_1},..., 
x^{-1}\partial_{\theta_{\dim F}}\}.
\end{split}
\end{equation}

\subsection{Smoothly stratified iterated edge spaces of depth two}

A stratified space of depth $2$ is modelled as above but allowing the links 
$F$ to be stratified spaces of depth $1$, with 
smooth links. This is illustrated in Figure \ref{figure2}, and we proceed with studying 
this case in detail to provide a basis for a definition of smoothly stratified iterated edge spaces
of arbitrary depth. 
\begin{figure}[h]
	\includegraphics[scale=0.75]{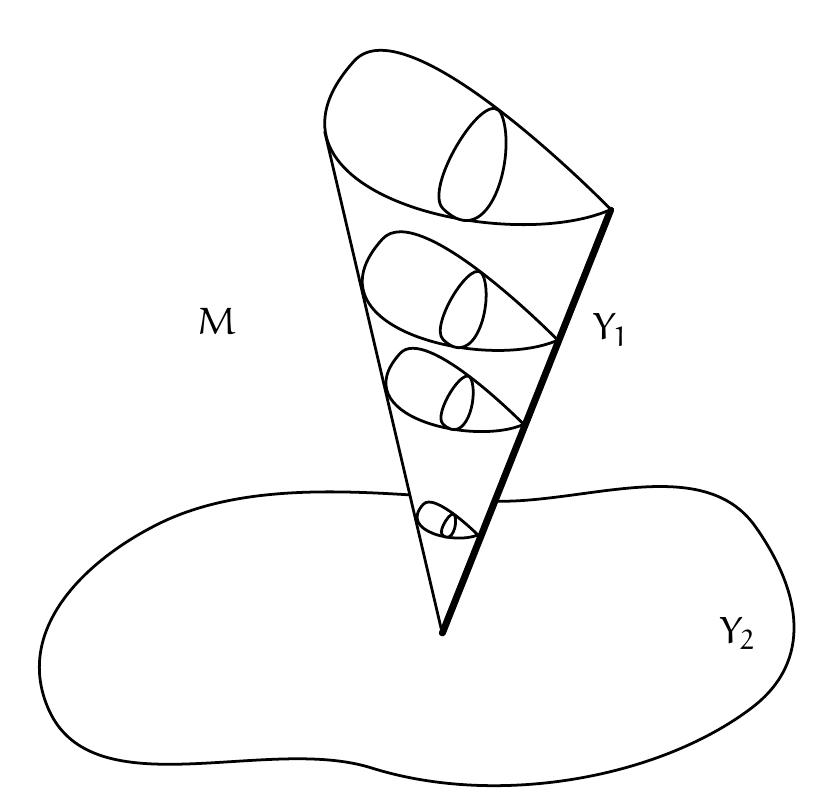}
	\caption{Tubular neighborhood $\cU\subset \overline{M}$, 
	$\overline{M}$ of depth $2$.}
	\label{figure2}
\end{figure}

The fibration of cones with singular links defines an open edge space itself with 
an open edge singularity in $Y_1$, which fibers over $Y_2$ and contains $Y_2$ in its closure. 
We now have two strata $\{Y_1,Y_2\}$ satisfying the following fundamental properties.
\begin{enumerate}
	\item[i)] $Y_2 \subset \overline{Y}_1$, and $Y_2$ is compact and smooth. 
	\item[ii)] Any point $q\in Y_1 = \overline{Y}_1\setminus Y_2$ has a tubular 
	neighborhood of cones with 
	smooth links. We say that $Y_1$ is a stratum of depth 1.
	Any point $q\in Y_2$ has a tubular neighborhood of cones
	$[0,1)\times F/_{(0, \theta_1) \sim (0, \theta_2)}$ with links $F$
	being stratified spaces of depth $1$. We say that $Y_2$ is a stratum of 
	depth $2$. 
	\item[iii)] We have the following sequence of inclusions 
	\begin{equation}
	\overline{M} \supset \overline{Y}_1 \supseteq Y_2 \supseteq 
	\varnothing.
	\end{equation}
    Then $\overline{M} \setminus 
	\overline{Y}_1$ is 
	an open Riemannian manifold dense in $\overline{M}$, and the strata of $\overline{M}$ are
	\begin{equation}
	Y_2,\;\; Y_1 = \overline{Y}_1\setminus Y_2,\;\; \overline{M}\setminus \overline{Y}_1.
	\end{equation}
\end{enumerate}

The resolution $\widetilde{M}$ is defined as in the depth one case by 
replacing the cones in the fibration $\phi: \cU \to Y_2$ by finite cylinders $[0,1)\times F$, and subsequently replacing the simple edge space $F$ with its resolution as well. This defines a compact manifold
with corners. The resolution $\widetilde{\cU}$ of $\cU$ is defined analogously. We denote the radial function on each cone in the fibration $\phi$ by $x$, and write $x'$ for the radial function of the simple edge space $F$. 

We can now define an iterated cone-edge metric $g$ as before by specifying
\begin{equation}
g|_{\cU} = dx^2 + \phi^{\ast} g_B + x^2 g_F + h=:g_0+h,
\end{equation}
where $B=Y_2$, $g_B$ is a smooth Riemannian metric, $g_F$ restricting on the links $F$ to 
iterated cone-edge metrics of depth $1$ (simple edge space). As before, these 
metrics 
$g_B$ and $g_F$
do not depend on the radial function $x$, and the higher order terms of the metric are included in 
the tensor $h$, which is smooth on $\widetilde{\cU}$ with $|h|_{g_0}= O(x)$ as $x\to 0$. We require that 
$\phi \restriction \b\cU: (\b\cU, g_F + \phi^{\ast}g_B)
\to (B,g_B)$ is a Riemannian submersion and put the same condition on the 
fibers $(F,g_F)$. 
 
The edge vector fields $\mathcal{V}_{e, 2}$, as well as the incomplete edge vector fields 
$\mathcal{V}_{ie, 2}$, are defined similarly to $\mathcal{V}_{e, 1}$ and  $\mathcal{V}_{ie, 1}$.
\begin{equation}\label{edge-vector-field-depth-2}
\begin{split}
&\mathcal{V}_{e,2}\restriction \widetilde{\cU} = C^\infty(\widetilde{\cU})\textup{- span}\, 
\{(xx')\partial_x, (xx')\partial_{y_1}, ..., (xx')\partial_{y_{\dim B}}, \mathcal{V}_{e, 1}(F)\}, \\
&\mathcal{V}_{ie,2}\restriction \widetilde{\cU} = C^\infty(\widetilde{\cU})\textup{- span}\, 
\{\partial_x, \partial_{y_1}, ..., \partial_{y_{\dim B}}, (xx')^{-1}\mathcal{V}_{e, 1}(F)\}.
\end{split}
\end{equation}
where $\mathcal{V}_{e, 1}(F)$ refers to the edge vector fields on the 
simple edge space $F$.

\subsection{Smoothly stratified iterated edge spaces of arbitrary depth}

At an informal level we can now say that $\overline{M}$ is a compact smoothly stratified iterated edge space
of arbitrary depth $k\geq 2$ with strata $\{Y_{\alpha}\}_{\alpha\in A}$ 
if $\overline{M}$ is compact and the following, inductively defined, properties are satisfied. 
     \begin{enumerate}
	\item[i)] If $Y_\alpha \cap \overline{Y}_\beta \neq \varnothing$ then 
	$Y_{\alpha}\subset \overline{Y}_\beta$ (each stratum is identified with its 
	open 
	interior).
	\item[ii)] The depth of a stratum $Y$ is the largest $(j-1) \in \N_0$ such 
	that there exists a chain of pairwise distinct strata $Y=Y_j,\; Y_{j-1},\ldots, Y_1$ with
	$Y_i \subset \overline{Y}_{i-1}$ for all $2	\leq i \leq j$. 
	\item[iii)] The stratum of maximal depth is smooth and compact. The maximal 
	depth of any stratum of 
	$\overline{M}$ is called the depth of $\overline{M}$.
	\item[iv)] Any point of $Y_\alpha$, a stratum of depth $j$, has a tubular 
	neighborhood of cones with links being stratified spaces of depth $j-1$, 
	for all $1\leq j \leq k$. 
	\item[v)] Setting 
	\begin{equation}
	\overline{M} = X_n \supset X_{n-1}=X_{n-2} \supseteq X_{n-3}\supseteq \cdots \supseteq 
	X_1 \supseteq X_0,
	\end{equation}
	where $X_j$ is the union of all strata of dimension less or equal than $j$,
	$X_n\setminus X_{n-2}$ is an open Riemannian manifold, dense in $\overline{M}$. 
\end{enumerate}
We call the union $X_{n-2}$ of all $Y_{\alpha}$, for 
$\alpha \in A$ the singular part of $\overline{M}$, and its complement in $\overline{M}$ the regular part, 
denoted by $M$. The precise definition of smoothly stratified spaces 
contains some other technical conditions, \cf Thom--Mather-spaces \cite{Alb}. 

The resolution $\widetilde{M}$ is a manifold with corners defined iteratively 
by resolving in each step the highest codimension singular strata as before. 
Each tubular neighborhood $\cU_\alpha$ of any point in $Y_\alpha$ admits 
a resolution $\widetilde{\cU}_\alpha$ in an analogous way. 

We define an iterated cone-edge metric $g$ on $M$ by asking $g$ to be an 
arbitrary smooth Riemannian metric away 
from singular strata, and requiring in each tubular neighborhood $\cU_\alpha$ of any point 
in $Y_\alpha$ to have the following form
\begin{equation}\label{iemetricUa}
g|_{\cU_\alpha} = dx^2 + \phi^{\ast}_\alpha g_{Y_\alpha} + x^2 g_{F_\alpha} + 
h=:g_0 + h,
\end{equation}
where $\phi_{\alpha}:\cU_\alpha \to \phi_{\alpha} (\cU_\alpha) \subseteq Y_\alpha$ is the 
obvious fibration, $\phi_{\alpha} (\cU_\alpha)$ is open in $Y_\alpha$, the restriction 
$g_{Y_\alpha} \restriction \phi_{\alpha} (\cU_\alpha)$ is a smooth Riemannian metric,
$g_F$ is a symmetric two tensor on the level set $\{x=1\}$, whose restriction to the links
$F_\alpha$ (smoothly stratified iterated edge spaces of depth at most $(k-1)$) 
is an iterated cone-edge metric. The higher order term $h$ is smooth on 
$\widetilde{\cU}_\alpha$
and satisfies $|h|_{g_0}=O(x)$, when $x\to 0$.
We also assume that $\phi_\alpha \restriction \b\cU_\alpha: (\b\cU_\alpha, g_{F_\alpha} + \phi^{\ast}_\alpha g_{Y_\alpha})
\to (\phi_{\alpha} (\cU_\alpha), g_{Y_\alpha})$ is a Riemannian submersion and 
put the same condition in the lower depth. Existence of such 
smooth iterated cone-edge metrics is discussed in \cite[Prop.~3.1]{ALMP1}.

The definition of edge vector fields
$\mathcal{V}_{e, k}$ and incomplete edge vector fields $\mathcal{V}_{ie, k}$, extends to the smoothly stratified space $M$
by an inductive procedure as in case of $k=2$, cf. \eqref{edge-vector-field-depth-2}. To be precise, 
denote by $\rho$ a smooth
	function on the resolution $\widetilde{M}$, nowhere vanishing
	in its open interior, and vanishing to first order at each 
	boundary face. Then $\mathcal{V}_{e,k} = \rho \mathcal{V}_{ie,k}$
	and
	\begin{equation}\label{edge-vector-field-depth-k}
	\begin{split}
	&\mathcal{V}_{e,k}\restriction \widetilde{\cU} = C^\infty(\widetilde{\cU})\textup{- span}\, 
	\{\rho \partial_x, \rho\partial_{s_1}, ..., 
	\rho \partial_{s_{\dim Y_\alpha}}, \mathcal{V}_{e, k-1}(F_\alpha)\}, \\
	&\mathcal{V}_{ie,k}\restriction \widetilde{\cU} = C^\infty(\widetilde{\cU})\textup{- span}\, 
	\{\partial_x, \partial_{s_1}, ..., \partial_{s_{\dim Y_\alpha}},  \rho^{-1}\mathcal{V}_{e,k-1}(F_\alpha)\}.
	\end{split}
	\end{equation}

\subsection{Sobolev spaces on smoothly stratified iterated edge spaces}

We may now define the edge Sobolev spaces in the setup of 
a compact stratified space $M$ of depth $k$ with an iterated cone-edge metric.
Let ${}^{ie}TM$ denote the canonical vector bundle defined by the condition that the 
incomplete edge vector fields $\mathcal{V}_{ie, k}$ form locally a spanning set of sections
$\mathcal{V}_{ie, k} = C^\infty(M, {}^{ie}TM)$. We denote by ${}^{ie}T^*M$ the dual of ${}^{ie}TM$,
also referred to as the incomplete edge cotangent bundle. We write $E=\Lambda^* ({}^{ie}T^*M)$, when 
discussing the Gauss--Bonnet operator, and we set $E$ to be the spinor bundle, 
when 
discussing the spin Dirac operator. In either of these cases we define the edge Sobolev 
spaces with values in $E$ as follows.

\begin{definition}\label{Sobolev-spaces}
Let $M$ be a compact smoothly stratified iterated edge space of arbitrary depth $k\in \N$
with an iterated cone-edge metric $g$. We denote by $L^2(M, E)$ the $L^2$ 
completion of smooth compactly supported differential forms $C^\infty_0(M, E)$. Denote by $\rho$ a smooth
function on the resolution $\widetilde{M}$, nowhere vanishing
in its open interior, and vanishing to first order at each 
boundary face. Then, for any $s\in \N$ and $\delta \in \R$
we define the weighted edge Sobolev spaces by
\begin{equation}
\begin{split}
&\sH_e^s(M):= \{\w \in L^2(M) \mid V_1 \circ \cdots \circ V_s \w \in L^2(M,E), \ 
\textup{for} \ V_j \in \mathcal{V}_{e, k}\}, \\
&\sH_e^{s, \delta}(M):= \{\w = \rho^\delta u \mid u \in \sH_e^s(M)\},
\end{split}
\end{equation}
where $V_1 \circ \cdots \circ V_s \w \in L^2(M,E)$ is understood in the distributional sense\footnote{
This is not the ordinary Sobolev space $H^s(\R_+)$ if $M=\R_+$. }.
\end{definition}

\section{Interpolation scales of Hilbert Spaces}
\label{s.ISHS}

\subsection{Preliminaries}
Let $H_1, H_2$ be Hilbert spaces which are assumed to be embedded into a
barrelled locally convex topological vector space, such that it makes sense to
talk about $H_1+H_2$ (non-direct sum space) and $H_1\cap H_2$.  Let
$[H_1,H_2]_{\theta}$, $0\leq \theta \leq 1$, be their complex interpolation
space. For Calder\'{o}n's complex interpolation theory \cite{Cal1964} we refer to
\cite[Sec.~4.2]{Tay}.  The space of bounded linear operators between $H_1, H_2$
is denoted by $\sL(H_1,H_2)$, resp. if $H_1=H_2=H$ we just write $\sL(H)$.

If $H_2 \hookrightarrow H_1$ is densely embedded such that the norm of $H_2$ is
the graph norm of the nonnegative self-adjoint operator $\gL$ in $H_1$,
then by \cite[Prop.~2.2]{Tay}
\begin{equation}\label{eq.ISHS.1}
[H_1,H_2]_{\theta} = \sD (\gL^{\theta}).
\end{equation} 
In fact, there is a converse to this statement.

\begin{lemma}\label{p.ISHS.1}
Let $T: H_1 \to H_2$ be a bounded operator between Hilbert spaces $H_1$ and 
$H_2$. Then we have the equality of ranges $\ran T = \ran \sqrt{T\ T^*}$.
\end{lemma}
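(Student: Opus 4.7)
The plan is to deduce the range identity from (a weak form of) polar decomposition, driven by the isometric identity
\[
\|T^*x\|_{H_1}^2 \;=\; \langle T T^* x, x\rangle_{H_2} \;=\; \|\sqrt{T T^*}\, x\|_{H_2}^2, \qquad x \in H_2,
\]
which is the only real input. First I would use this identity to declare a map $U : \ran\sqrt{TT^*} \to \ran T^*$ by $U(\sqrt{TT^*}\,x) := T^* x$. The identity above guarantees both that $U$ is well defined (if $\sqrt{TT^*}\,x = 0$ then $T^*x = 0$) and that it is isometric on $\ran\sqrt{TT^*}$. It therefore extends by continuity to an isometry of $\overline{\ran\sqrt{TT^*}}$ onto $\overline{\ran T^*}$, which, extended by zero on the orthogonal complement $\ker\sqrt{TT^*} = (\overline{\ran\sqrt{TT^*}})^\perp$, becomes a partial isometry $U : H_2 \to H_1$ satisfying, by construction, $T^* = U\sqrt{TT^*}$.

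Taking Hilbert-space adjoints turns this into $T = \sqrt{TT^*}\, U^*$, and the inclusion $\ran T \subseteq \ran\sqrt{TT^*}$ is then immediate. For the reverse inclusion I would exploit the standard fact that $U^*$ is itself a partial isometry, now with initial space $\overline{\ran T^*}$ and final space $\overline{\ran\sqrt{TT^*}} = (\ker\sqrt{TT^*})^\perp$. Given $y = \sqrt{TT^*}\, x \in \ran\sqrt{TT^*}$, orthogonally decompose $x = x_0 + x_1$ with $x_0 \in \ker\sqrt{TT^*}$ and $x_1 \in (\ker\sqrt{TT^*})^\perp$. Then $y = \sqrt{TT^*}\, x_1$, and since $x_1$ lies in the final space of $U^*$ one can write $x_1 = U^* z$ for some $z \in H_1$, whence $y = \sqrt{TT^*}\, U^* z = T z \in \ran T$.

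There is no genuine obstacle here; the whole argument is a few lines once the isometric identity is in place. The only point requiring a little care — and the one place a cavalier proof could go wrong — is that $U^*$ must be shown to map \emph{onto} $(\ker\sqrt{TT^*})^\perp$ rather than merely into it, which is why the orthogonal decomposition of $x$ cannot be skipped. Alternatively, one can short-circuit the construction of $U$ entirely by invoking Douglas' range inclusion theorem: the equality $T T^* = \sqrt{TT^*}\,(\sqrt{TT^*})^*$ forces both range inclusions at once, and the identity $\ran T = \ran\sqrt{TT^*}$ drops out immediately. I would present the direct partial-isometry proof, since it is elementary and keeps the paper self-contained.
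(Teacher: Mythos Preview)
Your proof is correct and follows essentially the same route as the paper: both arguments use the polar decomposition $T^* = U\sqrt{TT^*}$, take adjoints to get $T = \sqrt{TT^*}\,U^*$, and then exploit that $\ran U^* = (\ker \sqrt{TT^*})^\perp$ to obtain the reverse inclusion. The only difference is cosmetic: the paper simply invokes polar decomposition as a known fact, whereas you construct the partial isometry $U$ explicitly from the isometric identity $\|T^*x\| = \|\sqrt{TT^*}\,x\|$ and spell out the orthogonal decomposition of $x$ in the final step.
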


\begin{proof}
  Let $T^* = U|T^*| = U\sqrt{T\ T^\ast}$ be the polar decomposition of $T^*$;
$U$ is a partial isometry with $\ran U = \ovl{\ran T^*} = (\ker T)^\perp$ 
and $\ker U = \ker T^* = (\ran T)^\perp$.
Then, taking adjoints $T = \sqrt{T\ T^*} \ U^*$, and hence 
$\ran \sqrt{T \ T^\ast}\supset \ran T$. Since 
$\ran U^* = (\ker  U)^\perp = (\ker T^*)^\perp = (\ker \sqrt{T \ T^*})^\perp$, 
the equality follows.
\end{proof}

\begin{proposition}[{\cite[Sec.~I.2.1]{LioMag1972}}]
  \label{p.ISHS.2}      
  Let $H$ be a Hilbert space with a dense subspace $\sD\subset H$.
Assume  that $\sD$ carries a Hilbert space structure such that the inclusion
map $i:\sD \hookrightarrow H$ is continuous. Then $\sD = \ran \sqrt{i \ i^*}$ and
$\sqrt{i \ i^*}: H \to \sD$ is a unitary isomorphism. $\gL:=(\sqrt{i \
  i^*})^{-1}$ is a self-adjoint operator with domain $\sD$, hence
\begin{equation}\label{eq.ISHS.2} 
   [H,\sD]_\theta = \sD(\gL^\theta),\;\theta\in[0,1].
\end{equation}	
\end{proposition}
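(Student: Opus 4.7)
The plan is to apply Lemma \ref{p.ISHS.1} to the inclusion $i: \sD \hookrightarrow H$ to get the first assertion, then to exploit the polar decomposition of $i^*$ to upgrade the square-root map to a unitary isomorphism, and finally to interpret its inverse as a self-adjoint operator $\Lambda$ to which \eqref{eq.ISHS.1} directly applies.

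First I would set $T = i$ in Lemma \ref{p.ISHS.1}, which gives $\sD = \ran i = \ran \sqrt{i\, i^*}$, establishing the first stated equality. Next I would write out the polar decomposition $i^* = U \sqrt{i\, i^*}$, where $U: H \to \sD$ is a partial isometry with $\ker U = (\ran i)^\perp = \{0\}$ (by density of $\sD$ in $H$) and with $\ran U$ equal to the closure of $\ran i^*$ inside $\sD$. Since $\ker i = \{0\}$, the orthogonal complement in $\sD$ of $\ran i^*$ is trivial, so this closure is all of $\sD$; hence $U$ is a bijective isometry, i.e.\ a unitary isomorphism $H \to \sD$.

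Taking adjoints one gets $i = \sqrt{i\, i^*}\, U^*$. For every $x \in H$ I would then compute
\[
\|\sqrt{i\, i^*}\, x\|_\sD = \|i(U x)\|_\sD = \|U x\|_\sD = \|x\|_H,
\]
the second equality using that $i$ is literally the inclusion and so preserves the $\sD$-norm of its argument, the third that $U$ is unitary. Combined with surjectivity onto $\sD$ from the first step and the injectivity $\ker \sqrt{i\, i^*} = \ker i^* = \{0\}$, this shows that $\sqrt{i\, i^*}: H \to \sD$ is a unitary isomorphism. Its inverse $\Lambda := (\sqrt{i\, i^*})^{-1}$ is then a positive self-adjoint operator on $H$ (as the inverse of a positive, bounded, injective self-adjoint operator on $H$ with range $\sD$), with domain $\sD(\Lambda) = \sD$ and graph norm equivalent to $\|\cdot\|_\sD$ by construction. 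An application of \eqref{eq.ISHS.1} with $H_1 = H$, $H_2 = \sD$ then yields $[H,\sD]_\theta = \sD(\Lambda^\theta)$.

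The main subtle point will not be any calculation but the verification that $U$ is genuinely unitary rather than just a partial isometry: this requires combining the given density $\overline{\sD} = H$ with the less obvious density of $\ran i^*$ in $\sD$, which in turn follows from the injectivity of the inclusion. Once these two orthogonal-complement computations are in place, the rest is a direct translation of the polar decomposition identity into the language of graph norms.
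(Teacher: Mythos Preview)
Your proof is correct and follows essentially the same path as the paper: apply Lemma~\ref{p.ISHS.1} to get $\sD=\ran\sqrt{i\,i^*}$, use injectivity of $i$ and density of $\sD$ to see that $\sqrt{i\,i^*}$ is injective with dense range, verify that $\sqrt{i\,i^*}:H\to\sD$ is isometric, and then invoke \eqref{eq.ISHS.1}. The only cosmetic difference is in the isometry step: the paper checks $\|\sqrt{i\,i^*}\,y\|_\sD=\|y\|_H$ by a direct chain of inner-product identities using $\sqrt{i\,i^*}=i^*\Lambda$, whereas you extract the partial isometry $U$ from the polar decomposition $i^*=U\sqrt{i\,i^*}$, argue it is unitary, and read off the isometry from $i=\sqrt{i\,i^*}\,U^*$.
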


\begin{proof} By \eqref{eq.ISHS.1}, see \cite[Proposition 2.2]{Tay}, the last claim follows once the claims about the operator $\gL$ are established.
From Lemma \ref{p.ISHS.1} we know that $\sD = \ran\sqrt{i \ i^*}$. Note 
that $\ker i = \{0\}$, $\ovl{\ran i} = H$ and hence $i^*$ and
$\sqrt{i \ i^*}$ are injective with dense range. 
Consequently, $\gL = (\sqrt{i \ i^*})^{-1}$ is self-adjoint with domain $\sD$.
For $y\in \sD$ we find
\begin{equation}\label{eq.ISHS.3}
\begin{aligned}
\| \sqrt{i\ i^*} \ y\|_{\sD}^2 &=\langle \sqrt{i\ i^*} \ y, \sqrt{i\ i^*} \ 
	y\rangle_\sD = \langle  i^* \gL\ y, \sqrt{i\ i^*} \ y\rangle_\sD\\
	&=\langle  \gL\ y, i\ \sqrt{i\ i^*} \ y\rangle_H 
          =\langle   y, \gL \ \sqrt{i\ i^*} \ y\rangle_H = \|y\|^2_H.
\end{aligned}
\end{equation}
Since $\sD$ is dense the claim follows.
\end{proof}

\subsection{Scales of Hilbert Spaces}

From Br\"uning and Lesch \cite[Section 2]{BL1} we recall the useful concept
of a scale of Hilbert spaces, which has been used in various forms by
several authors, see Connes and Moscovici \cite[Appendix B]{CoMo95}, Higson
\cite[\S 4]{Hig06}, Otgonbayar \cite{Otg09} and Paycha \cite{Pay10}. Let $H$ 
be a Hilbert space and $A$ a self-adjoint operator in $H$. Then
\begin{equation}\label{eq.ISHS.4}
H^{\infty}:= \bigcap_{n=0}^\infty \sD (|A|^n) = \bigcap_{n=0}^\infty \sD(A^n)
\end{equation}
is dense in $H$. For $s\in \R$, let $H^s(A)$ be the completion of $H^\infty$ 
with respect to the scalar product
\begin{equation}\label{eq.ISHS.5}
\langle x , y \rangle_s:=\langle (I+ A^2)^{\frac{s}{2}} \ x , 
(I+A^2)^{\frac{s}{2}}\ y \rangle.
\end{equation}  
Then $H^n(A) = \sD(A^n) = \sD(|A|^n)$  for $n\in \Z_+$, respectively, $H^s(A) = 
\sD(|A|^s)$, for any $s\geq 0$. 

The properties of the family $\{H^s\}_{s\in \R}$ are reminiscent of properties of 
Sobolev spaces and they are summarized in the following proposition. 

\begin{proposition}\label{p.ISHS.3} 
The family $(H^s(A))_{s\geq 0}$ satisfies:
\begin{thmenum}
\item $H^s$ is a Hilbert space, for all $s\geq 0$.
		
\item For $s'\geq s$ we have a continuous embedding
$H^{s'}\hookrightarrow H^s$.

\item $[H_s,H_t]_\theta = H_{\theta t + (1-\theta) s }$, for 
$0\leq \theta \leq 1$, \\ 
in the sense of complex interpolation.
	
\item $H^\infty= \bigcap\limits_{s\geq 0} H^s$ 
is dense in $H^t$ for each $t$. 
\end{thmenum}
\end{proposition}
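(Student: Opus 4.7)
The plan is to reduce everything to the functional calculus of the single auxiliary operator $B := (I+A^2)^{1/2}$, which is self-adjoint in $H$ with $B \geq 1$. With $H^\infty = \bigcap_{n \geq 0} \sD(B^n) = \bigcap_{n \geq 0} \sD(A^n)$, the spectral theorem (via the cut-offs $\chi_{[0,n]}(B) x$, which lie in $H^\infty$) shows that $H^\infty$ is a core for every power $B^s$, $s \geq 0$. Consequently the abstract completion $H^s(A)$ is naturally identified with $\sD(B^s)$ equipped with the norm $\|B^s \cdot\|$, and $B^s$ extends to a unitary isomorphism $H^s \to H$. All four claims then reduce to elementary statements about $B$ on $H$, transported back through this unitary.

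Claims (i) and (ii) follow immediately from this setup: (i) holds because a completion of an inner-product space is a Hilbert space by construction, and (ii) follows from the elementary pointwise inequality $(1+\lambda^2)^{s/2} \leq (1+\lambda^2)^{s'/2}$ for $\lambda \in \R$ and $s \leq s'$, which gives $\|x\|_s \leq \|x\|_{s'}$ on $H^\infty$ and extends to a continuous (dense) embedding $H^{s'} \hookrightarrow H^s$.

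For (iii), fix $0 \leq s \leq t$ and consider $\gL := B^{t-s}$, viewed as an unbounded operator in $H^s$ with domain $H^t$. Conjugation by the unitary $B^s : H^s \to H$ makes $\gL$ unitarily equivalent to $B^{t-s}$ acting in $H$, so $\gL$ is positive self-adjoint in $H^s$; a short calculation shows that its graph norm equals $\bigl(\|x\|_s^2 + \|x\|_t^2\bigr)^{1/2}$, which is equivalent to $\|x\|_t$ since $B \geq 1$ implies $\|x\|_s \leq \|x\|_t$. Applying Proposition \ref{p.ISHS.2} (equivalently \eqref{eq.ISHS.1}) then yields
\begin{equation*}
[H^s, H^t]_\theta = \sD(\gL^\theta) = \sD(B^{s+\theta(t-s)}) = H^{(1-\theta)s + \theta t},
\end{equation*}
where the middle equality identifies the domain of $\gL^\theta$ in $H^s$ via the unitary $B^s$.

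Finally, for (iv), given $x \in H^t$, the sequence $x_n := \chi_{[0,n]}(B) x$ lies in $H^\infty$ and satisfies $\|x - x_n\|_t^2 = \|B^t(x-x_n)\|_H^2 \to 0$ by dominated convergence against the integrable function $\|B^t x\|_H^2$ on the spectral measure of $B$. The only genuinely non-formal step in this whole plan is the identification $H^s = \sD(B^s)$ from the opening paragraph, which rests on the spectral theorem; once this core property is in hand, the remaining arguments are straightforward bookkeeping through the unitary $B^s$.
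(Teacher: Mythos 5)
Your proof is correct. The paper itself does not prove Proposition \ref{p.ISHS.3} (it is recalled from Br\"uning--Lesch \cite{BL1}), but your argument via the functional calculus of $B=(I+A^2)^{1/2}$ --- identifying $H^s$ with $\sD(B^s)$ through the core property of $H^\infty$, and then invoking \eqref{eq.ISHS.1} with $\gL=B^{t-s}$ for the interpolation claim --- is exactly the standard route and is consistent with the machinery the paper sets up in Proposition \ref{p.ISHS.2}.
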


An abstract family $(H^s)_{s\geq 0}$ of Hilbert spaces satisfying
(1)--(4) is called an (interpolation) scale of Hilbert spaces.
If there exists a self-adjoint operator $A$ such that $H^s=H^s(A)$
for $s\ge0$ then we call $A$ a generator of the scale.
The item (4) implies that $H^{s'}$ is dense in $H^s$ for $s'\geq s$. 
Proposition \ref{p.ISHS.2} implies that for $N>0$ there exists a 
self-adjoint operator $\gL\geq0$ with $\sD(\gL^N) = H^N$ 
and hence $H^s = \sD(\gL^s) =: H^s(\gL)$ for
$0\leq s\leq N$.

\begin{remark}\label{p.ISHS.4} 
Given a scale $(H^s)_{s\geq 0}$ of Hilbert spaces as in Proposition \ref{p.ISHS.3},
one may ask whether there exists a generator $\gL$,  such that
$H^s = \sD(\gL^s)$ for all $s\geq0$, and not only for $0\leq s\leq N$.

We believe that in general the answer is no. \textit{E.g.,} the scale of 
Sobolev spaces
  $H^s([0,\infty))$ does not have a natural generator, although we cannot prove
  that there does not exist one.  We leave this open question to the reader.
  This does not affect the discussion below.
\end{remark}

Nonetheless, in the sequel we will for convenience assume that the scales do
have a global generator $\gL$. As the arguments will always only concern a
compact set of $s$-values, in light of the discussion above,
this is not really a loss of generality. 

Thus for all practical purposes we may think of a Hilbert space scale being 
the scale of a positive operator $\gL$.
We note that if two positive self-adjoint operators  $\gL_1$, 
$\gL_2$ have the same domain $\sD(\gL_1) = \sD(\gL_2)$
then $H^1(\gL_1)=H^1(\gL_2)$, and by complex interpolation
\begin{equation}\label{eq.ISHS.6}
  \sD(\gL_1^s) = [H,H^1(\gL_1)= H^2(\gL_2)]_s = \sD(\gL_2^s),\;{\text{for}}\; 
0\leq s \leq 1.
\end{equation}
In general, however, we will have 
\begin{equation}\label{eq.ISHS.7}
\sD(\gL_1^s)\not=\sD(\gL_2^s),\;\text{for}\; s>1.
\end{equation}

\begin{example}\label{p.ISHS.5}   
To illustrate this by example consider
\begin{equation}\label{eq.ISHS.8}
\gL_1 := \left(\begin{array}{cc}
0 & \partial_x\\-\partial_x & 0
\end{array}\right), \quad 
\gL_2 := \left(\begin{array}{cc}
0 & \partial_x+a\\-\partial_x+a & 0
\end{array}\right),
\end{equation}
acting in the Hilbert space $L^2(\R_+,\C^2)=L^2(\R_+)\otimes \C^2$
with domain
\begin{equation}\label{eq.ISHS.9}
\sD(\gL_1) = \sD(\gL_2)
  = \bigsetdef{f={f_1\choose f_2}\in H^1(\R_+)\otimes \C^2}{f_1(0)=0}. 
\end{equation} 
It is straightforward to see that $\gL_j, j=1,2$ are
self-adjoint. However, the domains of the squares are given
by
\begin{equation}\label{eq.ISHS.10}
\begin{split}
  \sD(\gL_1^2) &= \bigsetdef{f\in H^2(\R_+)\otimes \C^2}{ f_1(0) =0,\ 
  f_2'(0) = 0  }, \\
  \sD(\gL_2^2) &= \bigsetdef{f\in H^2(\R_+)\otimes \C^2 }{ f_1(0) =0,\ 
f_2'(0)+a\cdot f_2(0) = 0 },
\end{split}
\end{equation} 
thus $H^s(\gL_1) \not= H^s(\gL_2)$ for $1<s\le 2$.
\end{example}

In view of Example \ref{p.ISHS.5}, we may now ask for criteria such that
two self-adjoint operators generate the same interpolation scale. 
 
\begin{definition}\label{p.ISHS.6} 
Let $\gL$ be a self-adjoint operator in the Hilbert space $H$ with
interpolation scale $H^s(\gL)_{s\geq 0}$.
A linear operator $P:H^\infty (\gL) \to H^\infty(\gL)$ is said to be
of order $\mu$ if $P$ admits a formal adjoint\footnote{This means 
  that there is $P^t:H^\infty\to H^\infty$ such that for all $x,y\in H^\infty$, 
  $\inn{Px,y} = \inn{x,P^ty}$.} with respect to the scalar product of $H$,
and for any $s\in \R$, $P$ and $P^t$ extend by continuity
$H^s(\gL) \to H^{s-\mu}(\gL)$. I.~e. there are constants $C_s(P), C_s(P^t)$
such that for $x\in H^{\infty}$ we have $\|Px\|_{s-\mu} \le C_s(P)\cdot
\|x\|_s$ and $\|P^tx\|_{s-\mu} \le C_s(P)\cdot \|x\|_s$. 
By $\Op^\mu( \gL )$ we denote the operators of order $\mu$.
\end{definition}

Clearly, $\Op^\bullet(\gL) = \bigcup_\mu \Op^\mu(\gL)$ is a filtered
algebra of operators acting on $H^\infty(\gL)$. \mpar{ Ref to Higson, Connes maybe }
To show that an operator $P$ is of order $\mu$ it suffices to check
the estimates in the definition on a sequence $(t_j)_j$ of $t$-values
with $\lim t_j = \infty$. This follows again from complex interpolation.

The continuity condition can equivalently be formulated in terms of the
resolvent of $\gL$:
\begin{equation}\label{eq.ISHS.11}
\xymatrix@-1.75pc{H^t(\gL) \ar[rr]^{P} \ar[dd]_{(I+|\gL|)^t} & & 
H^{t-\mu}(\gL)\ar[dd]^{(I+|\gL|)^{t-\mu}}\\
	& \circlearrowleft & \\
	H=H^0(\gL)\ar[rr] &  & H^0(\gL)=H.}
\end{equation}
Here, the lower arrow is given by the operator
\begin{equation}\label{eq.ISHS.12}
(I+|\gL|)^{t-\mu} \circ P \circ (I+|\gL|)^{-t},
\end{equation}
which is required to be bounded on $H$ for all $t\in \R$.
If $P=\gL_2$ is a selfadjoint operator of order $1$ 
on the Sobolev-scale $H^\bullet(\gL_1)$, then we have an equality of
interpolation scales $H^\bullet(\gL_1) = H^\bullet(\gL_2)$, and hence
we conclude using the interpolation property with the following observation.

\begin{proposition}\label{p.ISHS.7}   
  Assume that for any $n\in\N\setminus \{0\}$
  \begin{equation}\label{eq.ISHS.13} 
  (I+|\gL_1|)^{n-1} \circ \gL_2 \circ (I+|\gL_1|)^{-n}
  \end{equation}
  is bounded on $H$. Then $\gL_1$ and $\gL_2$ generate the same 
  interpolation scales. If \eqref{eq.ISHS.13} is bounded only for $n=1$ then 
  we can only infer that  
  $H^s(\gL_1)= H^s(\gL_2)$ for $0 \leq s \leq 1$.
\end{proposition}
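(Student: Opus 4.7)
The plan is to read the hypothesis as the statement that $\gL_2$ belongs to $\Op^1(\gL_1)$ in the sense of Definition \ref{p.ISHS.6}, and then to iterate and interpolate to obtain equality of the scales.

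First, I would invoke the commutative diagram \eqref{eq.ISHS.11} with parameters $\mu=1$ and $t=n$ to translate the boundedness of \eqref{eq.ISHS.13} on $H$ into the continuity of $\gL_2\colon H^n(\gL_1)\to H^{n-1}(\gL_1)$. Thus the hypothesis supplies the continuity estimate on the unbounded integer sequence $t_n = n\to\infty$, and, as remarked after Definition \ref{p.ISHS.6}, this suffices for all real indices by interpolation. Since $\gL_2$ is self-adjoint and therefore coincides with its own formal adjoint, taking Hilbert-space adjoints in \eqref{eq.ISHS.13} and reinserting into the diagram also yields continuity $\gL_2\colon H^{-(n-1)}(\gL_1)\to H^{-n}(\gL_1)$ on the dual levels. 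Filling in the intermediate real indices by complex interpolation via Proposition \ref{p.ISHS.3}(3) shows $\gL_2\in\Op^1(\gL_1)$.

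Next, iterating the order-one property gives boundedness of $\gL_2^k\colon H^k(\gL_1)\to H$ for every $k\in\N$. Since $H^k(\gL_2)=\sD(\gL_2^k)$ with norm equivalent to the graph norm, this produces a continuous inclusion $H^k(\gL_1)\hookrightarrow H^k(\gL_2)$, which extends by complex interpolation to $H^s(\gL_1)\hookrightarrow H^s(\gL_2)$ for all $s\geq 0$. The reverse inclusion is obtained from the same argument with the roles of $\gL_1$ and $\gL_2$ interchanged, which is natural since the formal-adjoint clause in Definition \ref{p.ISHS.6} is symmetric in $P$ and $P^t$, and both operators in play are self-adjoint; together these give $H^s(\gL_1)=H^s(\gL_2)$ with equivalent norms for all $s\geq 0$.

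For the concluding remark, if \eqref{eq.ISHS.13} is only available at $n=1$ then the interpolation step above can only fill the interval between the integer levels $s=0$ and $s=1$, and there is no ladder to climb to higher integer levels, which is exactly the behaviour illustrated by Example \ref{p.ISHS.5}. The step I expect to be the main obstacle is the extraction of the symmetric counterpart in Step~2: one must make sure that the one-sided boundedness hypothesis, combined with self-adjointness of $\gL_1,\gL_2$ and the formal-adjoint requirement built into the definition of $\Op^1(\gL_1)$, indeed produces the reverse continuous embedding rather than merely a one-sided inclusion of domains.
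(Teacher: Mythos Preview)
Your approach tracks the paper's own treatment almost exactly: the paper does not supply a formal proof, but rather states in the paragraph preceding the proposition that the hypothesis means $\gL_2$ is a self-adjoint operator of order~$1$ on the scale $H^\bullet(\gL_1)$ and then asserts the equality of scales, ``hence we conclude using the interpolation property with the following observation.'' So your Steps~1 and~2 (translate \eqref{eq.ISHS.13} via the diagram \eqref{eq.ISHS.11} into $\gL_2\in\Op^1(\gL_1)$, then iterate and interpolate) reproduce that reasoning and correctly yield the inclusion $H^s(\gL_1)\subset H^s(\gL_2)$.

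The obstacle you flag is genuine, and the paper does not resolve it either. The hypothesis \eqref{eq.ISHS.13} is asymmetric in $\gL_1,\gL_2$; self-adjointness of $\gL_2$ gives you $\gL_2=\gL_2^t$ and hence the formal-adjoint clause of Definition~\ref{p.ISHS.6} on the $\gL_1$-scale, but it does \emph{not} place $\gL_1$ in $\Op^1(\gL_2)$, so ``interchanging the roles'' is not available from the stated hypothesis alone. Indeed, take $\gL_1$ self-adjoint with unbounded spectrum and $\gL_2=I$: then $(I+|\gL_1|)^{n-1}\gL_2(I+|\gL_1|)^{-n}=(I+|\gL_1|)^{-1}$ is bounded for every $n$, yet $H^s(\gL_2)=H$ for all $s$ while $H^s(\gL_1)\subsetneq H$ for $s>0$. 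Thus the conclusion as literally stated requires something more --- for instance the symmetric hypothesis with the roles of $\gL_1,\gL_2$ exchanged, or equality of domains $\sD(\gL_1^n)=\sD(\gL_2^n)$. In the paper's actual applications this extra input is present: in Remark~\ref{different-sobolev-scales} one has $\sD(S(y))=\sD_S$ fixed, so $H^1(\gL_1)=H^1(\gL_2)$ and the $n=1$ conclusion follows from \eqref{eq.ISHS.6}; for the higher range Assumption~\ref{assume-equal-interpolation-scales} is imposed directly. Your write-up is therefore faithful to the paper's sketch, and your hesitation at the reverse inclusion is exactly the point where the statement, read in isolation, is incomplete.
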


\subsection{Tensor products of interpolation scales}
In this section we follow in part \cite[Sec.~2]{BL1}.
We fix two interpolation scales $\{H_j^s\}_{s\geq 0}$, $j=1,2$ with generators
$\gL_1$, $\gL_2$. Without loss of generality we may choose $\gL_1$,
$\gL_2$ such that they are greater or equal to $I$ and hence we may define
the scalar product on $H_j^s$ by
\begin{equation}\label{eq.ISHS.14}
\langle x,y\rangle_{H^s_j}:=\langle \gL_j^s x, \gL_j^s y \rangle_{H_j}.
\end{equation}

For tensor products of (unbounded) operators we refer to the
Appendix \ref{s.TPO}, in particular 
Proposition \ref{p.TPO.2}. $H_1\hot  H_2$ resp. $A\hot  B$
denotes the completed Hilbert space tensor product resp. the tensor product
of (unbounded) operators $A,B$.

\begin{lemma}\label{p.ISHS.8}    
$\{H_1^s \hot  H^s_2\}_{s\geq 0}$ is an interpolation scale with generator 
$\gL_1 \hat\otimes \gL_2$. 
\end{lemma}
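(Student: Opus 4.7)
The plan is to identify the tensor product $\gL_1 \hot \gL_2$ as the generator of the scale $\{H_1^s \hot H_2^s\}_{s\geq 0}$, after which Proposition \ref{p.ISHS.3} delivers the four interpolation scale properties for free. Under the normalisation $\gL_j\geq I$ adopted in the excerpt, the scalar product on $H_j^s$ is simply $\langle \gL_j^s x, \gL_j^s y\rangle_{H_j}$, which streamlines the identification.

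First I would invoke Proposition \ref{p.TPO.2} from the appendix to conclude that $\gL_1 \hot \gL_2$ is a positive self-adjoint operator on $H_1\hot H_2$ satisfying $\gL_1 \hot \gL_2\geq I$. The key point here is that the algebraic tensor product $\sD(\gL_1)\otimes_{\mathrm{alg}}\sD(\gL_2)$ is a common core for the two strongly commuting positive self-adjoint operators $\gL_1\otimes I$ and $I\otimes \gL_2$, so that the joint spectral theorem (applied to their product measure) is available.

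Second, using the joint functional calculus of the commuting pair $(\gL_1\otimes I,\, I\otimes \gL_2)$, I would establish the identity
\begin{equation*}
(\gL_1 \hot \gL_2)^s \;=\; \gL_1^s \hot \gL_2^s, \qquad s\geq 0,
\end{equation*}
together with the corresponding equality of domains. On algebraic tensors $x\otimes y$ with $x\in H_1^\infty(\gL_1)$, $y\in H_2^\infty(\gL_2)$, the definition of the Hilbert tensor product norm gives
\begin{equation*}
\bigl\|(\gL_1 \hot \gL_2)^s (x\otimes y)\bigr\|_{H_1\hot H_2}
=\|\gL_1^s x\|_{H_1}\,\|\gL_2^s y\|_{H_2}
=\|x\otimes y\|_{H_1^s\hot H_2^s}.
\end{equation*}
Since $\gL_1\hot \gL_2\geq I$, the graph norm of $(\gL_1\hot \gL_2)^s$ is equivalent to $\|(\gL_1\hot \gL_2)^s \,\cdot\,\|$, and the algebraic tensor product of the smooth scales is a core for $(\gL_1\hot \gL_2)^s$ as well as being dense in $H_1^s \hot H_2^s$. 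Taking completions yields the isometric identification $\sD\bigl((\gL_1\hot \gL_2)^s\bigr)=H_1^s\hot H_2^s$.

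Third, with this identification in hand, $\{H_1^s\hot H_2^s\}_{s\geq 0}$ coincides with the Hilbert space scale $\{H^s(\gL_1 \hot \gL_2)\}_{s\geq 0}$, and so Proposition \ref{p.ISHS.3} immediately supplies properties (1)–(4) and hence qualifies the family as an interpolation scale with generator $\gL_1\hot \gL_2$. The main technical obstacle is really only the strong commutativity and core argument needed to justify the identity $(\gL_1 \hot \gL_2)^s = \gL_1^s\hot \gL_2^s$; this is where one must be careful not to confuse the algebraic tensor product with the completed one, but once the joint spectral measure viewpoint is adopted the remaining steps reduce to bookkeeping.
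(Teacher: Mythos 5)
Your proposal is correct and follows essentially the same route as the paper: reduce to the operator identity $(\gL_1\hot\gL_2)^s=\gL_1^s\hot\gL_2^s$ via the commuting positive operators $\gL_1\hot I$ and $I\hot\gL_2$ (with $\gL_1\hot\gL_2\geq I$ from the appendix), then match the tensor-product norm with the graph norm on a dense set of algebraic tensors. The only cosmetic difference is that the paper writes out the inner-product computation for general finite sums $\sum_j x_j\otimes y_j$ rather than single elementary tensors, which is the routine extension you implicitly invoke when passing to completions.
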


\begin{proof}
By Proposition \ref{p.TPO.3}, we have
$\gL_1\hot \gL_2 \geq I$, hence the graph norm of 
$(\gL_1\hot \gL_2)^s$ is equivalent to
$\|(\gL_1\hot \gL_2)^s x\|$. Note furthermore,
that  $\gL_1\hot I$ and $I\hot \gL_2$ are commuting
self-adjoint operators greater or equal to $I$, thus 
\begin{equation}\label{eq.ISHS.15}
(\gL_1\hot \gL_2)^s 
  = (\gL_1\hot  I \cdot I\hot \gL_2)^s 
  = \gL^s_1\hot  I \cdot I\hot \gL_2^s
  =\gL^s_1\hot  \gL_2^s.
\end{equation}
Furthermore, for $x_j\in H_1^\infty$, $y_j\in H_2^\infty, j=1,\ldots,r$
we have with each summation index running from $j=1,\ldots,r$:
\begin{equation*}
\begin{aligned}
\Bigl \|\sum_j x_j\otimes y_j\Bigr\|^2_{H_1^s\hot  H_2^s} 
  &=  \sum_{k,l}\langle x_k \otimes y_k, x_l\otimes y_l \rangle_{H_1^s \hot H_2^s}\\
  &= \sum_{k,l}\langle x_k, x_l\rangle_{H_1^s} \langle y_k, y_l \rangle_{H_2^s}\\
  &= \sum_{k,l}\langle \gL_1^s x_k, \gL_1^s x_l\rangle_{H_1} \langle 
  \gL_2^sy_k, \gL_2^s y_l \rangle_{H_2}\\
  &= \Bigl \langle \gL_1^s \hot \gL_2^s \Bl\sum_j x_j\ot y_j\Br, 
      \gL_1^s \hot \gL_2^s \Bl\sum_j x_j\ot y_j\Br \Bigr\rangle_{H_1\otimes 
      H_2}.
\end{aligned}
\end{equation*}
This shows that the tensor product norm on $H_1^s\hot H_2^s$ is equivalent
to the graph norm of $\gL_1^s\hot \gL_2^s$ which proves the claim.
\end{proof}

As a consequence we get for $s,t\geq 0$
\begin{equation}\label{eq.ISHS.17}
\bigl [H_1^s\hot  H_2^s, H_1^t \hot  H_2^t\bigr ]_\theta = 
    H_1^{\theta t + (1-\theta s )} \hot  H_2^{\theta t + (1-\theta)s}
              =[H_1^s,H_1^t]_\theta \hot [H_2^s,H_2^t]_\theta .
\end{equation}
Since every interpolation pair of Hilbert spaces may be embedded into an 
interpolation scale (Proposition \ref{p.ISHS.2}) we obtain 

\begin{corollary}\label{p.ISHS.9} 
If $E'\subset E$, $F'\subset E$ are interpolation pairs of Hilbert spaces 
then, for $0\leq \theta \leq 1$,
\begin{equation}\label{eq.ISHS.18}
    [E\hot F,E'\hot F']_{\theta} = 
    [E,E']_\theta\hot [F,F']_\theta.
\end{equation}
\end{corollary}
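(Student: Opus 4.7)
The plan is to reduce the corollary to Lemma \ref{p.ISHS.8} by realizing each interpolation pair as the initial two levels of a Hilbert space scale, then applying the already-established tensor product identity \eqref{eq.ISHS.17}.

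First, I would apply Proposition \ref{p.ISHS.2} to the pair $E' \subset E$ and to the pair $F' \subset F$. This yields self-adjoint operators $\gL_1$ on $E$ with $\sD(\gL_1) = E'$ and $\gL_2$ on $F$ with $\sD(\gL_2) = F'$. After adding a sufficiently large positive constant (which leaves the domain and the topology of the scale unchanged), I may assume $\gL_1, \gL_2 \geq I$, so the discussion preceding \eqref{eq.ISHS.14} applies. This produces interpolation scales $\{E^s\}_{s\geq 0}$ and $\{F^s\}_{s\geq 0}$ with $E^0 = E$, $E^1 = E'$, $F^0 = F$, $F^1 = F'$, and by Proposition \ref{p.ISHS.3}(3) we have $[E,E']_\theta = E^\theta$ and $[F,F']_\theta = F^\theta$.

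Next, I would invoke Lemma \ref{p.ISHS.8} to conclude that $\{E^s \hot F^s\}_{s\geq 0}$ is itself an interpolation scale (with generator $\gL_1 \hot \gL_2 \geq I$). In particular $E^0 \hot F^0 = E\hot F$ and $E^1 \hot F^1 = E' \hot F'$ are its level zero and level one spaces. The interpolation identity for this scale at $s=0$, $t=1$, which is precisely the content of \eqref{eq.ISHS.17}, gives
\begin{equation*}
[E\hot F, E'\hot F']_\theta = E^\theta \hot F^\theta = [E,E']_\theta \hot [F,F']_\theta,
\end{equation*}
which is the claim.

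The only genuinely non-trivial point is ensuring that the operators produced by Proposition \ref{p.ISHS.2} meet the hypotheses of Lemma \ref{p.ISHS.8}, but the shift $\gL_j \mapsto \gL_j + cI$ is harmless since $\sD((\gL_j + cI)^s) = \sD(\gL_j^s)$ with equivalent norms. Everything else is a direct combination of the two preceding results, so I expect no real obstacle beyond keeping track of domains.
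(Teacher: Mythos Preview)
Your proposal is correct and follows essentially the same route as the paper: the paper simply observes that, by Proposition \ref{p.ISHS.2}, each interpolation pair embeds into an interpolation scale, after which \eqref{eq.ISHS.17} yields the claim. Your additional remark about shifting $\gL_j$ to be $\geq I$ makes explicit a detail the paper leaves implicit.
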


\begin{remark}\label{p.ISHS.10}
The tensor product of Lemma \ref{p.ISHS.8} should not be 
confused with the Sobolev spaces on product spaces. Note that on $\R^n$ we 
have $H^s(\R^n) = H^s(\Delta_{\R^n})$, where $\Delta_{\R^n} = - \sum_{j=1}^n \partial_{x_j}^2$
is the Laplace operator. Now it is not true that 
\begin{equation}\label{eq.ISHS.19}
   H^s(\R^n \times \R^m) = H^s(\R^n)\hot H^s(\R^m).
\end{equation}
Rather we have the following equalities
\begin{equation}\label{eq.ISHS.20}
\begin{aligned}
  H^s(\R^n \times \R^m) 
     &= H^s(\Delta_{\R^n\times\R^m} = \Delta_{\R^n}\hot  I + I \hot \Delta_{\R^m})\\
     &\stackrel{!}{=} H^s(\Delta_{\R^n})\hot L^2(\R^m) \ \cap \ 
        L^2(\R^n)\hot  H^s(\R^m), \;\text{ for } s\geq 0.
\end{aligned}
\end{equation}
This is due to the equality of domains
\begin{equation}\label{eq.ISHS.21}
   \sD((\Delta_{\R^n}\hot  I + I 
	\hot \Delta_{\R^m})^s)=\sD(\Delta^s_{\R^n}\hot  I)\cap 
	\sD(I \hot \Delta^s_{\R^m}),
\end{equation}
as we will see below.
\end{remark}

\begin{definition}\label{p.ISHS.11}
Given two interpolation scales $\{H^s_j\}_{s\geq 0}, j=1,2$, 
we put for $s\geq 0$
\begin{equation}\label{eq.ISHS.22}
   \sH^s:= \sH^s(\{ H_1^{\bullet} \},\{ H_2^{\bullet} \})
	:=H_1^s \hot  H_2^0 \cap H_1^0 \hot  H_2^s.
\end{equation}
\end{definition}
This is a Hilbert space with scalar product being the sum of the scalar
products of $H_1^s\hot H_2^0$ and $H_1^0 \hot  H_2^s$.

\begin{proposition}\label{p.ISHS.12}
Let $\gL_1,\gL_2 \geq I$ be generators of 
$\{H_1^\bullet\}$, $\{H_2^\bullet\}$, respectively. 
Then $\{\sH^s\}_{s\geq 0}$ is an 
interpolation scale with generator $\gL_1\hot I + I \hot  \gL_2$ and
\begin{equation}\label{eq.ISHS.23}
  \sH^s = \bigcap_{0\leq t \leq s} H_1^t \hot  H_2^{s-t}
  = \left(H_1^s \hot  H_2^{0}\right) \cap \left(H_1^0 \hot  H_2^{s}\right).
\end{equation}
\end{proposition}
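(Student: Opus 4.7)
The plan is to identify $\sH^s$ with the domain of $A^s$, where $A:=\gL_1\hot I + I\hot \gL_2$, so that $\{\sH^s\}_{s\ge0}$ is a scale of Hilbert spaces by Proposition \ref{p.ISHS.3}, with $A$ as a generator. The triple equality then follows by interpolating between the two ``extreme'' tensor products.

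First I would invoke the tensor product results of Propositions \ref{p.TPO.2} and \ref{p.TPO.3} to see that $X:=\gL_1\hot I$ and $Y:=I\hot \gL_2$ are commuting self-adjoint operators on $H_1\hot H_2$, both $\ge I$, and hence $A=X+Y$ is self-adjoint with $A\ge 2I$. Working with the joint spectral resolution of the commuting pair $(X,Y)$, supported in $[1,\infty)^2$, the functional calculus identifies $A^s$ with multiplication by $(\xi+\eta)^s$ and $X^s+Y^s$ with multiplication by $\xi^s+\eta^s$. For $s\ge 0$ and $\xi,\eta\ge 1$ one has the elementary pointwise inequalities
\begin{equation*}
  2^{-s}\,(\xi+\eta)^s \;\le\; \xi^s+\eta^s \;\le\; 2\,(\xi+\eta)^s,
\end{equation*}
so that $\|A^s u\|^2 \asymp \|X^s u\|^2 + \|Y^s u\|^2$ for $u\in H^\infty(A)$, with constants depending only on $s$. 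In particular,
\begin{equation*}
\sD(A^s)=\sD(X^s)\cap \sD(Y^s),
\end{equation*}
with equivalent graph norms.

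Next I would identify each of $\sD(X^s)$ and $\sD(Y^s)$ as a tensor product. By the computation \eqref{eq.ISHS.15} in Lemma \ref{p.ISHS.8} applied with $\gL_2$ replaced by $I$, we have $X^s=\gL_1^s \hot I$, and the graph norm of $\gL_1^s\hot I$ on $H_1\hot H_2$ coincides with the norm of $H_1^s\hot H_2^0$; symmetrically $\sD(Y^s)=H_1^0\hot H_2^s$. Combining this with the previous step yields $\sD(A^s)=\sH^s$ with equivalent norms. Hence Proposition \ref{p.ISHS.3} furnishes the scale structure and $A$ as its generator. This proves the right-hand equality in \eqref{eq.ISHS.23}.

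Finally, for the full intersection formula $\sH^s=\bigcap_{0\le t\le s} H_1^t\hot H_2^{s-t}$, note that the inclusion ``$\supset$'' is immediate by taking $t\in\{0,s\}$. For ``$\subset$'', Corollary \ref{p.ISHS.9} together with Proposition \ref{p.ISHS.3}(3) applied to each factor gives
\begin{equation*}
 \bigl[H_1^s\hot H_2^0,\;H_1^0\hot H_2^s\bigr]_\theta
   \;=\;[H_1^s,H_1^0]_\theta \hot [H_2^0,H_2^s]_\theta
   \;=\; H_1^{(1-\theta)s}\hot H_2^{\theta s}.
\end{equation*}
Since the intersection of two Hilbert spaces continuously embeds into every complex interpolation space between them, taking $\theta=(s-t)/s$ yields $\sH^s\hookrightarrow H_1^t\hot H_2^{s-t}$ for all $0\le t\le s$.

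The only real technical point is the commuting-operator functional-calculus argument that underlies $\sD(A^s)=\sD(X^s)\cap \sD(Y^s)$; everything else is essentially bookkeeping once the tensor-product properties in the appendix and Corollary \ref{p.ISHS.9} are taken as given.
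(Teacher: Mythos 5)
Your proof is correct in substance, and its core coincides with the paper's: both identify $\sH^s$ with $\sD\bigl((\gL_1\hot I+I\hot\gL_2)^s\bigr)$ by combining the joint spectral calculus of the commuting operators $X=\gL_1\hot I$, $Y=I\hot\gL_2$ with the elementary inequality $\tfrac12(b^s+c^s)\le(b+c)^s\le 2^s(b^s+c^s)$ (the paper's \eqref{eq.ISHS.24}, which is your pointwise estimate in rearranged form), and then use Lemma \ref{p.ISHS.8} to read off $\sD(X^s)=H_1^s\hot H_2^0$ and $\sD(Y^s)=H_1^0\hot H_2^s$. You diverge only in the last step, the inclusion $\sH^s\subset H_1^t\hot H_2^{s-t}$: you interpolate between the two extreme tensor products and invoke the embedding of the intersection into every complex interpolation space, whereas the paper stays inside the joint functional calculus and applies the concavity inequality $a^\theta b^{1-\theta}\le\theta a+(1-\theta)b$ with $a=\xi^s$, $b=\eta^s$, obtaining $\sD(X^s)\cap\sD(Y^s)\subset\sD(X^{\theta s}Y^{(1-\theta)s})=H_1^{\theta s}\hot H_2^{(1-\theta)s}$ directly. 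One caveat on your route: Corollary \ref{p.ISHS.9} is stated for nested pairs $E'\subset E$, $F'\subset F$, while you apply it to the ``cross'' couple $\bigl[H_1^s\hot H_2^0,\;H_1^0\hot H_2^s\bigr]$, in which the inclusions run in opposite directions in the two factors. The identity you need is nevertheless true, but justifying it for this configuration essentially reduces to the same commuting-operator functional-calculus computation that the paper performs outright; the paper's direct argument is therefore shorter and stays strictly within what has already been established.
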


\begin{proof}
Recall that $\gL_1\hot I$, $I\hot \gL_2$ are 
commuting self-adjoint operators greater or equal to $I$. Now from 
\begin{equation}\label{eq.ISHS.24}
  \frac{1}{2} (b^s+c^s) \leq (b+c)^s\leq 2^s (b^s+c^s),
\end{equation}
for $b,c,s\geq 0$ and the Spectral Theorem we infer
\begin{equation}\label{eq.ISHS.25}
    \sH^s = \sD(\gL_1^s \hot  I) \cap \sD(I \hot  
   	\gL_2^s) = \sD((\gL_1\hot I + I \hot \gL_2)^s),
\end{equation}
hence the first part of the statement follows. 
	
For the second part, we first note
that the concavity of the $\log$--function implies the inequality
\begin{equation}\label{eq.ISHS.26}
  a^{\theta}\cdot b^{1-\theta} \leq \theta \cdot a + (1-\theta)\cdot b,
\end{equation}
for $a,b\geq 0$ and $0 \leq \theta \leq 1$. For the commuting
operators $\gL_1\hot I$, $I\hot \gL_2$ and $0\leq \theta \leq 1$
the inequality implies
\begin{equation}\label{eq.ISHS.27}
  \begin{aligned}
    H_1^0\hot H_2^s \cap H_1^s\hot H_2^0 
      &= \sD(I\hot \gL_2^s) \cap \sD(\gL_1^s\hot I)\\
      &\subset \sD((\gL_1^s\hot I)^\theta \cdot	(I\hot \gL_2^s)^{1-\theta})\\
      &=\sD(\gL_1^{\theta s} \hot  \gL_2^{s-\theta s})
        =H_1^{\theta s} \hot  H_2^{s-\theta s}.
  \end{aligned}
\end{equation}
Consequently, $\sH^s \subset \bigcap_{0\leq t \leq s} H_1^t \hot  H_2^{s-t}$.
\end{proof}

\section{Dirac operators on an abstract edge}\label{abstract-Dirac-section}

\subsection{Generalized Dirac operators on an abstract edge}

Let $S$ be a smooth family of self-adjoint operators in a Hilbert space $H$ 
with parameter $y\in \R^b$ and a fixed domain $\sD_S$. We assume that each 
$S(y)$ is discrete. A generalized Dirac Operator $D$ acting on 
$C_0^\infty(\R_+\times\R^b,H^\infty)$ is defined by the following (differential) expression
\begin{equation}\label{generalized-Dirac}
D:=\Gamma (\partial_x + X^{-1}  S)+ T,
\end{equation}
where $x\in \R_+$, $X$ denotes the multiplication operator by $X$, $\Gamma$ is 
skew-adjoint and a unitary operator on the Hilbert space $L^2(\R_+\times 
\R^b,H)$, and $T$ is a symmetric generalized Dirac Operator on $\R^b$, given in 
terms of coordinates $(y_1,\ldots,y_b)\in\R^b$ and smooth families $(c_1(y), c_b(y))$
of bounded linear operators on $H$, which satisfy Clifford relations for 
each fixed $y\in \R^b$, by
\begin{equation}
T= \sum_{j=1}^{b}c_j(y) \frac{\partial}{\partial y_j}.
\end{equation}
Here, we have hid the vector bundle value action of the Dirac Operator $T$ into 
the Hilbert space $H$. 

We assume that the following standard commutator relations hold
\begin{equation}\label{Eq1}
\begin{aligned}
\Gamma \ S + S \ \Gamma &= 0,\\
\Gamma \ T + T \ \Gamma &= 0,\\
T \ S - S \ T &=0.
\end{aligned}
\end{equation}
In \S \ref{s.AEDSE} we show that the Gauss--Bonnet 
operator on a simple edge satisfies these relations, cf. \eqref{commutator-GB}.
The same relations hold for the spin Dirac operator, as shown in 
\cite[(3.16), (3.18)]{Albin-Jesse}. 

We shall also consider $D$ with coefficients frozen at some $y_0 \in \R^b$
\begin{equation}
D_{y_0}:=\Gamma (\partial_x + X^{-1} S(y_0))+ T_{y_0}, \quad 
\textup{where} \ T_{y_0}= \sum_{j=1}^{b}c_j(y_0) \frac{\partial}{\partial y_j}.
\end{equation}
Consider the Fourier transform $\mathfrak{F}_{y\to\xi}$ on the
$L^2(\R^b)$-component of $L^2(\R_+\times\R^b,H)$. We use H\"ormander's normalization and write 
\begin{equation}
\left(\mathfrak{F}_{y\to\xi} f\right)(\xi) = 
\int_{\R^b} e^{-i \langle y, \xi \rangle} f(y) dy, 
\quad \left(\mathfrak{F}^{-1}_{y\to\xi} g\right)(y) = 
\int_{\R^b} e^{i \langle y, \xi \rangle} g(\xi) \frac{d\xi}{(2\pi)^b}. 
\end{equation}
We compute
\begin{equation}\label{L-transform}
\mathfrak{F}_{y\to\xi} \circ D_{y_0} \circ \mathfrak{F}_{y\to\xi}^{-1} = \Gamma 
(\partial_x + X^{-1}S(y_0)) + ic(\xi; y_0) =: L(y_0,\xi),
\end{equation}
where
\begin{equation}
c(\xi; y_0) := \sum_{j=1}^b c_j(y_0) \xi_j.
\end{equation}
The usual strategy is now to study invertibility of $L(y_0,\xi)$ on appropriate 
spaces, which is then used to construct the parametrix for $D$ and analysis 
of its domain.

\subsection{The spectral Witt condition}

We also impose a {\it spectral Witt condition}, which asserts that 
\begin{equation}\label{Witt-condition}
\forall \, y \in \R^b: \ \spec S(y) \cap \left[-1, 1\right] = \varnothing,
\end{equation}

\begin{remark}
We should point out that Albin and Gell-Redman \cite{Albin-Jesse} require 
a smaller spectral gap $\spec S(y) \cap \Bl-1/2, 1/2\Br = \varnothing$. However, 
when proving an analogue of the crucial \cite[Lemma 3.10]{Albin-Jesse} by explicit
computations, it seems that a smaller spectral gap may not be sufficient for our purposes. 
In any case, if $D$ is the Gauss--Bonnet operator on a 
stratified Witt space, one can always achieve 
$\spec S(y) \cap \Bl-R, R\Br = \varnothing$ for any $R>0$ by a simple rescaling of the metric.
\end{remark}

\subsection{Squares of generalized Dirac operators}

In view of the commutator relations \eqref{Eq1},
the generalized Laplace operators $D^2$ and $D_{y_0}^2$, acting both 
on $C_0^\infty(\R_+\times\R^b,H^\infty)$, are of the following form
\begin{equation*}
\begin{split}
&D^2 = -\b_x^2 + X^{-2}\ S \ (S+1) + T^2, \\
&D^2_{y_0} = -\b_x^2 + X^{-2} \ S(y_0)  \ (S(y_0) + 1) + T^2_{y_0}.
\end{split}
\end{equation*}
We set $A := \left|S\right| +\frac{1}{2}$. Assuming 
$\spec S \cap \left[-1, 1\right] = \varnothing$, we find $S(S+1) = A^2 -1/4$
and rewrite the generalized Laplacians $D^2$ and $D_{y_0}^2$ as follows
\begin{equation*}
\begin{split}
&D^2 = -\b_x^2 + X^{-2} \Bl A^2 - \frac{1}{4} \Br + T^2, \\
&D^2_{y_0} = -\b_x^2 + X^{-2}\Bl A^2(y_0) - \frac{1}{4} \Br+ T^2_{y_0}.
\end{split}
\end{equation*}
As before, we may apply the Fourier transform $\mathfrak{F}_{y\to\xi}$ 
and compute
\begin{equation*}
\begin{split}
\mathfrak{F}_{y\to\xi} \circ D^2_{y_0} \circ \mathfrak{F}_{y\to\xi}^{-1} &= -
\b_x^2 + X^{-2}\left(A^2(y_0) - \frac{1}{4}\right) + c(\xi,y_0)^2 \\ 
&=: L^2(y_0,\xi), \ \textup{where} \ 
c(\xi; y_0)^2 =- \sum_{j,k=1}^b c_j(y_0) c_k(y_0)\xi_j \xi_k.
\end{split}
\end{equation*}

\subsection{Sobolev-spaces of an abstract edge}

Recall the definition of interpolation scales of Hilbert spaces in \S \ref{s.ISHS}. This defines for 
each $y_0\in\R^b$ an interpolation scale $H^s(S(y_0))$, $s\in\R$.
We can now define the Sobolev-scales on the 
{\it model cone} and the {\it model edge} in our abstract setting. Consider for 
this the Sobolev-scale $H^\bullet_e(\R_+)$ generated by\footnote{The edge
Sobolev scale $H^\bullet_e(\R_+)$ prescribes regularity under differentiation by $x\partial_x$.
However, $x\partial_x$ is not a symmetric operator and hence we take its symmetrization $(i x\partial_x + i/ 2)$
as the generator of the Sobolev scale. Alternatively we can replace the definition of Sobolev scales to 
allow for closed not necessarily symmetric operators.} $(i x\partial_x + i/ 2)$; and 
the Sobolev-scale \LHchange{There is an arrow in the note that I do not 
understand.} $H_e^\bullet(\R_+\times \R^b)$ generated by 
$\gL = (i x\partial_x + i/ 2)\hot I+I\hot x T_{y_0}$. The lower index $e$ indicates
that these interpolation scales coincide with the edge Sobolev spaces for integer
orders.

\begin{definition} Let $y_0 \in \R^b$ be fixed.
	\begin{enumerate}
		\item[a)] The Sobolev-scale $W^\bullet(\R_+,H)$ of an abstract model 
		cone is defined as an interpolation scale with generator 
		$(i x\partial_x + i/ 2)\hot I+I\hot S(y_0)$. By Proposition \ref{p.ISHS.12}
		\begin{equation}\label{Sobolev-space-definition}
		W^s(\R_+,H):=(H^s_e(\R^+)\hot H) \cap 
		(L^2(\R_+)\hot H^s(S(y_0))).
		\end{equation}
	
		\item[b)] The Sobolev-scale $W^\bullet(\R_+\times \R^b, H)$ of an 
		abstract model edge is defined as an interpolation scale with 
		generator $\gL\hot  I + I \hot S(y_0)$, where $\gL$ is the generator of the
		Sobolev-scale $H^s_e(\R_+\times \R^b)$. By Proposition \ref{p.ISHS.12}
		\begin{equation}
		W^s(\R_+\times\R^b,H):=(H^s_e(\R_+\times\R^b)\hot H) \cap 
		(L^2(\R_+\times \R^b)\hot H^s(S(y_0))).
		\end{equation}
	\end{enumerate}
\end{definition}

\begin{remark}\label{different-sobolev-scales}
In view of Proposition \ref{p.ISHS.7}, for $y,y_0\in \R^b$, the interpolation scales of $S(y)$ 
and $S(y_0)$ need not coincide. However, since for any $y\in \R^b$,
the domain of $S(y)$ is fixed and given by $\sD_S$, we have $H^s(S(y))= H^s(S(y_0))$ for 
$0\leq s\leq 1$. In particular the Sobolev scales $W^s(\R_+, H)$ and $W^s(\R_+\times \R^b, H)$
do not depend on $y_0 \in \R^b$ for $0\leq s\leq 1$. In fact, in our 
arguments below we will require
independence of the Sobolev spaces for $0 \leq s \leq 2$. 
\end{remark}

We conclude with a definition of weighted Sobolev-spaces, where we denote by $X$ 
the multiplication operator by $x \in \R_+$.

\begin{definition}
	The weighted Sobolev-scales are defined by
	\begin{equation}
	W^{s,\delta,l}:= X^\delta (1+X)^{-l} W^s(\R_+,H), \quad W^{s,\delta}:= W^{s,\delta,0}.
	\end{equation}
\end{definition}

\subsection{Examples of generalized Dirac operators on an abstract edge}
\label{s.AEDSE}

The spin Dirac operator on a model edge space
is indeed a generalized Dirac operator in the sense that it is given by the differential 
expression \eqref{generalized-Dirac} and satisfies the commutator relations \eqref{Eq1}. This has been 
established by Albin and Gell-Redman \cite{Albin-Jesse}. In this subsection we 
prove that the Gauss--Bonnet operator on a model edge space is a generalized 
Dirac operator in the sense above as well.

Let $M^m$ and $N^n$ be Riemannian manifolds. Given forms $\omega_p \in \Omega^p(M)$ and $\eta_q \in \Omega^q(N)$, we will write $\omega_p \wedge \eta_q$ for the form $\pi_M^* (\omega_p) \wedge \pi_N^* (\eta_q) \in \Omega^{p+q}(M\times N)$, where $\pi_M:M\times N \to M$ and $\pi_N:M\times N \to N$ are projections onto the first and second factors respectively. It is well known that the 
exterior derivative $d:\Omega^{\ast}(M\times N) \to \Omega^{\ast}(M\times N)$ 
satisfies the Leibniz rule, \ie if $\omega_p\in\Omega^{p}(M)$ and 
$\eta_q\in\Omega^q(N)$ then
\begin{equation}
d(\omega_p\wedge\eta_q) \ = \ (d^M\omega_p)\wedge\eta_q + 
(-1)^{p} \  
\omega_p\wedge(d^N\eta_q).
\end{equation}

\begin{lemma}\label{Leibniz}
	The same Leibniz rule holds for the adjoint of the exterior derivative 
	$d^t$ in $\Omega^{\ast}(M\times N)$.
\end{lemma}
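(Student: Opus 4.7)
The plan is to establish, for $\omega_p\in\Omega^p(M)$ and $\eta_q\in\Omega^q(N)$, the identity
\begin{equation*}
d^t(\omega_p\wedge\eta_q) \;=\; (d^{M,t}\omega_p)\wedge\eta_q + (-1)^p\, \omega_p\wedge(d^{N,t}\eta_q),
\end{equation*}
by testing against compactly supported smooth forms with the $L^2$ inner product induced by the product metric on $M\times N$. The key structural observation I would use is that the pointwise inner product on $\Lambda^\bullet T^*(M\times N)$ respects the orthogonal bidegree decomposition $\Lambda^k T^*(M\times N) = \bigoplus_{p+q=k}\Lambda^p T^*M \otimes \Lambda^q T^*N$, that the volume form is $dV_M\,dV_N$, and that on decomposable forms Fubini gives
\begin{equation*}
\langle \omega_p\wedge\eta_q,\omega'_p\wedge\eta'_q\rangle_{L^2(M\times N)} \;=\; \langle\omega_p,\omega'_p\rangle_{L^2(M)}\,\langle\eta_q,\eta'_q\rangle_{L^2(N)}.
\end{equation*}

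Next I would use $\langle d^t(\omega_p\wedge\eta_q),\beta\rangle = \langle\omega_p\wedge\eta_q, d\beta\rangle$ for an arbitrary compactly supported $(p+q-1)$-form $\beta$. Using a partition of unity subordinate to product charts, it suffices to consider $\beta = \beta_r\wedge\beta_s$ with $r+s = p+q-1$, and by orthogonality of bidegrees on both sides only the cases $(r,s)=(p-1,q)$ and $(r,s)=(p,q-1)$ can yield nonzero pairings. In each case I expand $d\beta$ using the Leibniz rule for $d$ stated just above the lemma; exactly one of the two resulting summands has bidegree $(p,q)$ and thus contributes, while the other is annihilated by orthogonality. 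Fubini, combined with the defining identities for $d^{M,t}$ and $d^{N,t}$ on the factors, then transfers the derivative from $\beta$ onto $\omega_p$ respectively $\eta_q$, producing $(d^{M,t}\omega_p)\wedge\eta_q$ in the first case and $(-1)^p\,\omega_p\wedge(d^{N,t}\eta_q)$ in the second.

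The only delicate point is bookkeeping of the sign $(-1)^p$ in the second case. It does not originate from the adjoint operation itself (Fubini and the inner products on the factors are sign-free) but rather is inherited unchanged from the Leibniz rule applied to $d(\beta'_p\wedge\beta_{q-1}) = (d^M\beta'_p)\wedge\beta_{q-1} + (-1)^p\,\beta'_p\wedge(d^N\beta_{q-1})$, where only the second summand survives the bidegree match with $\omega_p\wedge\eta_q$. I do not anticipate any serious obstacle; the main care is in verifying that exactly one Leibniz summand survives in each bidegree case and that the sign is preserved under adjunction. Summing the two contributions and invoking density of decomposable test forms yields the lemma.
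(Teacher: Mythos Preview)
Your proposal is correct and follows essentially the same approach as the paper: both test the identity against decomposable forms of the two relevant bidegrees $(p-1,q)$ and $(p,q-1)$, expand $d$ on the test form via the Leibniz rule, use bidegree orthogonality to kill one summand, and then apply the Fubini factorization of the $L^2$ inner product together with the defining adjoint relations for $d^{M,t}$ and $d^{N,t}$. The only cosmetic difference is that the paper indexes with $\omega_{p+1}\wedge\eta_q$ rather than $\omega_p\wedge\eta_q$, yielding the sign $(-1)^{p+1}$ in place of your $(-1)^p$.
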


\begin{proof}
Note that $\Omega^{\ast}(M\times N)$ can be decomposed into a direct sum of 
subspaces of the form $\Omega^{\ast}(M) \wedge \Omega^{\ast}(N)$. Hence it suffices
to study the action of $d^t$ on differential forms in $\Omega^{p+1}(M) \wedge \Omega^q(N)$,
where we have
\begin{equation}
d^t: \Omega^{p+1}(M) \wedge \Omega^q(N) \to 
(\Omega^{p}(M) \wedge \Omega^q(N)) \oplus (\Omega^{p}(M) \wedge \Omega^{q-1}(N)).
\end{equation}
	Consider $\tilde\omega_{p}\in \Omega^{p}(M)$, 
	$\omega_{p+1},\tilde\omega_{p+1}\in 
	\Omega^{p+1}(M)$, $\tilde\eta_q, \eta_q\in \Omega^{q}(N)$ and 
	$\tilde\eta_{q-1}\in \Omega^{q-1}(N)$, then we have for the first component of $d^t$
	\begin{equation}
	\begin{aligned}
	\langle d_{p+q}^t (\omega_{p+1}\wedge&\eta_q), 
	\tilde\omega_{p}\wedge\tilde\eta_q\rangle \\
	&= \langle 
	\omega_{p+1}\wedge\eta_q, 
	(d^M\tilde\omega_{p})\wedge\tilde\eta_q + (-1)^p 
	\tilde\omega_{p}\wedge(d^N\tilde\eta_q)\rangle\\
	&=\langle 
	\omega_{p+1}\wedge\eta_q, 
	(d^M\tilde\omega_{p})\wedge\tilde\eta_q \rangle\\
	&=\langle \omega_{p+1},  (d^M\tilde\omega_{p})\rangle_M
	\langle \eta_{q}, \tilde\eta_q\rangle_N\\
	&=\langle (d^{M,t}\omega_{p+1}), \tilde\omega_{p}\rangle_M
	\langle \eta_{q}, \tilde\eta_q\rangle_N\\
	&=\langle (d^{M,t}\omega_{p+1})\wedge \eta_{q}, 
	\tilde\omega_{p}\wedge\tilde\eta_{q}\rangle.
	\end{aligned}
	\end{equation}
	For the second component of $d^t$, we obtain
	\begin{equation}
	\begin{aligned}
	\langle d_{p+q}^t &(\omega_{p+1}\wedge\eta_q), 
	\tilde\omega_{p+1}\wedge\tilde\eta_{q-1}\rangle\\ 
	&= \langle 
	\omega_{p+1}\wedge\eta_q, 
	(d^M\tilde\omega_{p+1})\wedge\tilde\eta_{q-1} + (-1)^{p+1} 
	\tilde\omega_{p+1}\wedge(d^N\tilde\eta_{q-1})\rangle\\
	&=(-1)^{p+1}\langle 
	\omega_{p+1}\wedge\eta_q, 
	\tilde\omega_{p+1}\wedge(d^N\tilde\eta_{q-1}) \rangle\\
	&=(-1)^{p+1}\langle \omega_{p+1},  \tilde\omega_{p+1}\rangle_M
	\langle \eta_{q},  (d^N\tilde\eta_{q-1})\rangle_N\\
	&=(-1)^{p+1}\langle \omega_{p+1}, \tilde\omega_{p+1}\rangle_M
	\langle (d^{N,t}\eta_{q}), \tilde\eta_{q-1}\rangle_N\\
	&=(-1)^{p+1}\langle \omega_{p+1}\wedge (d^{N,t}\eta_{q}), 
	\tilde\omega_{p+1}\wedge\tilde\eta_{q-1}\rangle.
	\end{aligned}
	\end{equation}
	Altogether, we arrive at the result
	\begin{equation}
	d_{p+q}^t (\omega_{p+1}\wedge\eta_q) = (d_p^{M,t}\omega_{p+1})\wedge 
	\eta_{q} + (-1)^{p+1} \omega_{p+1}\wedge (d^{N,t}_{q} \eta_q).
	\end{equation}
	
\end{proof}

We now apply Lemma \ref{Leibniz} to the case of a model edge $C(F) \times Y$
of cones $C(F) = \R_+\times F$ fibered over an edge manifold $Y$.
Recall that on a cone $C(F) = \R_+\times F$ we have as in 
\cite[(5.9a), (5.9b)]{BS2} the following isometric identifications 
\begin{equation}\label{unitary-trafo}
\Omega^{\rm ev}(C(F)) \cong C^{\infty}(\R_+,\Omega^{\ast}(F)), \qquad
\Omega^{\rm odd}(C(F)) \cong C^{\infty}(\R_+,\Omega^{\ast}(F)).
\end{equation}
Under these identifications the Gauss--Bonnet 
operator $D=d+d^t$ acting now from $\Omega^{\rm ev}(C(F))\cong C^{\infty}(\R_+,\Omega^{\ast}(F))$ to $\Omega^{\rm odd}(C(F))\cong C^{\infty}(\R_+,\Omega^{\ast}(F))$, takes the form
cf. \cite[(5.10)]{BS2} 
\begin{equation}
D = \frac{d}{dx}+X^{-1} A.
\end{equation}
Respectively, the full operator $D$ acts on 
$C^{\infty}(\R_+,\Omega^{\ast}(C(F))\oplus \Omega^{\ast}(C(F)))$ as
\begin{equation}
\left(
\begin{array}{cc}
0 & -\frac{d}{dx}+X^{-1} A\\
\frac{d}{dx}+X^{-1} A & 0
\end{array}
\right) = 
\left(
\begin{array}{cc}
0 & -1\\
1 & 0
\end{array}
\right) 
\left(\frac{d}{dx} + X^{-1} 
\left(
\begin{array}{cc}
A & 0\\
0 & -A
\end{array}\right)\right).
\end{equation}
Note that the grading operator on $\Omega^{\ast}(C(F))\oplus \Omega^{\ast}(C(F))$ is 
$\left(\begin{array}{cc}
1 & 0\\
0 & -1
\end{array}\right)$.

Taking now the cartesian product by a manifold $Y$ (the edge), we have
\begin{equation*}
\begin{aligned}
\Omega^{\rm ev}(C(F)\times Y) &=  \Omega^{\rm ev}(C(F))\otimes \Omega^{\rm 
ev}(Y) \oplus \Omega^{\rm odd}(C(F))\otimes \Omega^{\rm odd}(Y)\\
&\cong C^{\infty}(\R_+, \Omega^{\ast}(F))\otimes \Omega^{\rm ev}(Y) \oplus 
C^{\infty}(\R_+, \Omega^{\ast}(F))\otimes \Omega^{\rm odd}(Y),
\end{aligned}
\end{equation*}
where we used the identifications \eqref{unitary-trafo} in the second equality. 
In exactly the same manner we find for differential forms of odd degree
\begin{equation*}
\begin{aligned}
\Omega^{\rm odd}(C(F)\times Y) &=  \Omega^{\rm odd}(C(F))\otimes \Omega^{\rm 
	ev}(Y) \oplus \Omega^{\rm ev}(C(F))\otimes \Omega^{\rm odd}(Y)\\
&\cong C^{\infty}(\R_+, \Omega^{\ast}(L))\otimes \Omega^{\rm ev}(Y) \oplus 
C^{\infty}(\R_+, \Omega^{\ast}(F))\otimes \Omega^{\rm odd}(Y).
\end{aligned}
\end{equation*}
So again we have an identification of the space $\Omega^{\rm 
ev}(C(F)\times Y)$ with the space $\Omega^{\rm odd}(C(F)\times Y)$. 
For $\omega_1 \in \Omega^{\rm ev}(C(F))$, $\omega_2 \in \Omega^{\rm odd}(C(F))$, $\eta_1 \in 
\Omega^{\rm ev}(Y)$ and $\eta_2 \in \Omega^{\rm odd}(Y)$, we have 
$\omega_1\otimes \eta_1 \oplus \omega_2\otimes\eta_2\in \Omega^{\rm 
ev}(C(F)\times Y)$. Using Lemma \ref{Leibniz} we now find for $D= d + d^t$,
\begin{equation}
\begin{aligned}
D(\omega_1\otimes \eta_1 \oplus \omega_2\otimes\eta_2) &= 
D^{C(F)}\omega_1\otimes \eta_1 + \omega_1 \otimes D^Y\eta_1\\
&+ D^{C(F)}\omega_2\otimes \eta_2 - \omega_2\otimes D^Y\eta_2\\
&=\left(
\begin{array}{cc}
\partial_x + X^{-1}A & - D^Y\\
D^Y & -\partial_x + X^{-1}A
\end{array}
\right) 
\left(
\begin{array}{c}
\omega_1\otimes\eta_1\\
\omega_2\otimes\eta_2
\end{array}
\right).
\end{aligned}
\end{equation}
Note that by construction $A$ and $D^Y$ commute. By abuse of notation $A$ 
acts as $A\otimes I$ and $D^Y$ acts as $I\otimes D^Y$ on the tensors. The 
full Gauss--Bonnet then becomes
\begin{equation}\label{GaussBonnetOp}
D= \left(\begin{array}{cccc}
0 & 0 & -\partial_x + X^{-1}A & D^Y\\
0 & 0 & -D^Y & \partial_x + X^{-1}A\\
\partial_x + X^{-1}A & -D^Y & 0 & 0\\
D^Y & -\partial_x + X^{-1}A & 0 & 0
\end{array}\right).
\end{equation}
This expression can rewritten as follows.
\begin{equation}
\begin{aligned}
D = \left(
\begin{array}{cccc}
0 & 0 & -1 & 0\\
0 & 0 &  0 & 1\\
1 & 0 &  0 & 0\\
0 &-1 &  0 & 0
\end{array}
\right) &\left(\partial_x + X^{-1} \left(
\begin{array}{cccc}
1 & 0 & 0 & 0\\
0 & -1 &0 & 0\\
0 & 0 & -1&0\\
0 & 0 & 0 & 1
\end{array}
\right)\cdot A\right)\\
&+ 
\left(
\begin{array}{cccc}
0 & 0 & 0 & 1\\
0 & 0 & -1 & 0\\
0 & -1 & 0 & 0\\
1 & 0 & 0 & 0
\end{array}
\right)\cdot D^Y,
\end{aligned}
\end{equation}
with grading operator $\left(\begin{array}{cc}
I_2 & 0\\
0 & - I_2
\end{array}\right)$
where $I_2$ is the identity in $M_2(\R)$. Define the following matrices
\begin{equation*}
\Gamma = \left(
\begin{array}{cccc}
0 & 0 & -1 & 0\\
0 & 0 &  0 & 1\\
1 & 0 &  0 & 0\\
0 &-1 &  0 & 0
\end{array}
\right)\!\!,
S =\left(
\begin{array}{cccc}
1 & 0 & 0 & 0\\
0 & -1 &0 & 0\\
0 & 0 & -1&0\\
0 & 0 & 0 & 1
\end{array}
\right) A,
T = \left(
\begin{array}{cccc}
0 & 0 & 0 & 1\\
0 & 0 & -1 & 0\\
0 & -1 & 0 & 0\\
1 & 0 & 0 & 0
\end{array}
\right) D^Y\!.
\end{equation*}
We introduce the usual Clifford matrices
\begin{equation}
\sigma_1 = \left(
\begin{array}{cc}
0 & -1\\
1 & 0
\end{array}
\right),\
\sigma_2=\left(
\begin{array}{cc}
0 & i \\
i & 0
\end{array}
\right),\
\omega=
\left(
\begin{array}{cc}
1 & 0\\
0 & -1
\end{array}
\right) = i \cdot \sigma_1 \cdot \sigma_2.
\end{equation}
We have,
\begin{equation}
\begin{aligned}
\Gamma &= \left(
\begin{array}{cc}
0 & -\omega\\
\omega & 0
\end{array}
\right) = \sigma_1 \otimes \omega,\\
S &= \left(
\begin{array}{cc}
\omega & 0\\
0 & -\omega
\end{array}
\right)\otimes A = \omega \otimes \omega \otimes A,\\
T &= \left(
\begin{array}{cc}
0 & -\sigma_1\\
\sigma_1 & 0
\end{array}
\right) \otimes D^Y = \sigma_1 \otimes \sigma_1 \otimes D^Y.
\end{aligned}
\end{equation}
We can now easily compute the commutator relations
\begin{equation}\label{commutator-GB}
\begin{aligned}
\Gamma \ S + S \ \Gamma &= \sigma_1 \otimes \omega \cdot \omega \otimes 
\omega \otimes A + \omega \otimes \omega \otimes A \cdot \sigma_1 \otimes 
\omega\\
&= (\sigma_1 \omega + \omega \sigma_1)\otimes \omega^2 \otimes A  = 0.\\
\Gamma \ T + T \ \Gamma &= \sigma_1 \otimes \omega \cdot \sigma_1 \otimes 
\sigma_1 \otimes D^Y + \sigma_1 \otimes \sigma_1 \otimes D^Y \cdot \sigma_1 
\otimes 
\omega\\
&=  \sigma_1 \otimes (\omega \sigma_1 + \sigma_1 \omega)\otimes D^Y = 0.\\
T \ S - S \ T &= \sigma_1 \otimes \sigma_1 \otimes D^Y \cdot \omega \otimes 
\omega \otimes A - \omega \otimes \omega \otimes A \cdot \sigma_1 \otimes 
\sigma_1 \otimes D^Y\\
&= (\sigma_1\cdot \omega 
\otimes \sigma_1\cdot \omega - \omega \cdot \sigma_1 \otimes \omega \cdot \sigma_1)\otimes D^Y \cdot A\\
&=(\omega \cdot \sigma_1 \otimes \sigma_1 \cdot \omega+ \sigma_1\cdot \omega 
\otimes  \sigma_1 \cdot \omega)\otimes D^Y \cdot A\\
&=(\omega \cdot \sigma_1 + \sigma_1\cdot \omega)\otimes  \sigma_1 \cdot \omega \otimes D^Y \cdot A = 0.
\end{aligned}
\end{equation}

\section{Some integral operators and auxiliary estimates}

In this section we study boundedness properties of certain 
integral operators that appear below when inverting the model 
Bessel operator $L^2(y_0,\xi)$ and its square $L^2(y_0,\xi)^2$. 

\begin{proposition}\label{Schur-1-1}
	Let $\nu \geq \frac{3}{2} + \delta$ for some $\delta>0$ and consider the integral operator $K$
	acting on $C^\infty_0(\R_+)$ with integral 
	kernel given by 
	\begin{equation}
	k(x,y) = \left\{\begin{array}{cc}
	\frac{1}{2\nu} \bl \frac{y}{x}\br^{\nu} (xy)^{\frac{1}{2}}, & y\leq 
	x,\\
	\frac{1}{2\nu} \bl \frac{y}{x}\br^{-\nu} (xy)^{\frac{1}{2}}, & x\leq 
	y.
	\end{array}	
	\right.
	\end{equation}
	Then $X^{-2} \circ K$ defines a bounded operator on $L^2(0,\infty)$	
	and there exists a constant $C>0$ depending only on $\delta>0$ 
	such that 
	\begin{equation}\label{K-0-2}
	\begin{split}
	&\|X^{-2}  \circ K \|_{L^2\to L^2}\leq 
	\Bl\nu^{2}-\frac{9}{4}\Br^{-1}, \\
	&\|(X \partial_x) \circ X^{-2} \circ K \|_{L^2\to L^2}\leq 
	\Bl\nu-\frac{3}{2}\Br^{-1}, \\
	&\|(X \partial_x)^2 \circ X^{-2} \circ K \|_{L^2\to L^2}\leq C.
	\end{split}
	\end{equation}
	\end{proposition}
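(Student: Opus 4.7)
The plan is to apply the Schur test throughout, after identifying the structure of $k(x,y)$. The first observation is that $k(x,y)$ is the Green's function of the Bessel-type operator $L=-\partial_x^2+(\nu^2-1/4)X^{-2}$, since the functions $u_\pm(x)=x^{1/2\pm\nu}$ satisfy $Lu_\pm=0$ and have Wronskian $-2\nu$. This makes the differentiation-under-the-integral-sign computations clean, because boundary contributions at $y=x$ will cancel.

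For the first bound I compute the row- and column-sums of the kernel of $X^{-2}\circ K$ directly. Splitting at $y=x$ and integrating:
\begin{equation*}
\int_0^\infty |k(x,y)|\,x^{-2}\,dy=\frac{1}{2\nu(\nu+3/2)}+\frac{1}{2\nu(\nu-3/2)}=\frac{1}{\nu^2-9/4},
\end{equation*}
and analogously $\int_0^\infty |k(x,y)|\,x^{-2}\,dx = (\nu^2-1/4)^{-1}$. Schur's lemma with constant weight then gives $\|X^{-2}K\|\le\left[(\nu^2-9/4)(\nu^2-1/4)\right]^{-1/2}\le(\nu^2-9/4)^{-1}$, since $\nu^2-1/4\ge\nu^2-9/4$.

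For the second bound I differentiate $v(x):=(X^{-2}Kf)(x)$ under the integral sign. The boundary contributions $\pm\frac{1}{2\nu}x^{-1}f(x)$ cancel, leaving an integral operator whose kernel has coefficients $-\frac{\nu+3/2}{2\nu}$ on $\{y\le x\}$ and $\frac{\nu-3/2}{2\nu}$ on $\{y\ge x\}$. I then apply Schur again. A short computation shows the row sum equals $\nu^{-1}$ and the column sum equals $(\nu^2-3/4)/[\nu(\nu^2-1/4)]$; an elementary polynomial inequality (verified by expanding $3\nu^3-7\nu^2/4-9\nu/4+27/16\ge 0$ for $\nu\ge 3/2$) gives the bound $(\nu-3/2)^{-1}$.

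For the third bound I iterate, computing $(X\partial_x)^2 v = Xv'+X^2 v''$. Crucially, after summing the two contributions the coefficients from the two regions simplify to $(\nu+3/2)^2/(2\nu)$ and $(\nu-3/2)^2/(2\nu)$, and the boundary contributions from $v''$ combine with an $f(x)$-term to produce an explicit identity part $-I$. Applying Schur to the resulting kernel: the row sum is identically $1$ and the column sum works out (using $(\nu+3/2)^2(\nu-1/2)+(\nu-3/2)^2(\nu+1/2)=\nu(2\nu^2+3/2)$) to $(\nu^2+3/4)/(\nu^2-1/4)$. For $\nu\ge 3/2+\delta$, both quantities are bounded by constants depending only on $\delta$. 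Adding the norm $1$ from the $-I$ contribution gives $\|(X\partial_x)^2 X^{-2}K\|\le C$ with $C=C(\delta)$.

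The main obstacle is purely bookkeeping: I need to keep track of the signs and boundary contributions when differentiating under the integral sign, and verify the polynomial inequality in the second estimate. No deeper functional-analytic input is required beyond Schur's test and the explicit form of the Green's function.
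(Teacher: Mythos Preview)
Your proof is correct and follows the same Schur-test approach as the paper. The paper carries out the first estimate in detail and then states that the second and third are established \emph{ad verbatim}; your explicit tracking of the boundary cancellations (in particular the appearance of the $-I$ term in the third estimate) and the polynomial inequality for the second estimate fill in details the paper leaves to the reader.
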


\begin{proof}
	We apply Schur's test, cf. Halmos and Sunder 
	\cite[Theorem 5.2]{HaSu78}. We have
	\begin{equation}
	\begin{aligned}
	\int_{0}^{x}x^{-2}k(x,y) \ dy &+ \int_{x}^{\infty} x^{-2} k(x,y) \ dy \\ &= 
	\frac{1}{2\nu} \Bl x^{-\nu - \frac{3}{2}}\int_{0}^x y^{\nu+\frac{1}{2}} \ 
	dy + x^{\nu - \frac{3}{2}} \int_{x}^{\infty} y^{-\nu + \frac{1}{2}} 
	dy\Br\\
	&= \frac{1}{2\nu} \Bl \frac{1}{\nu + \frac{3}{2}} + \frac{1}{\nu - 
	\frac{3}{2}} \Br =  \Bl \nu^{2} - \frac{9}{4} \Br^{-1}.
	\end{aligned}
	\end{equation}
	Similarly, we integrate in the $x$ variable and find
	\begin{equation}
	\begin{aligned}
	\int_{0}^{y}x^{-2}k(x,y) \ dx &+ \int_{y}^{\infty} x^{-2} k(x,y) \ dx \\&= 
	\frac{1}{2\nu} \Bl y^{-\nu + \frac{1}{2}}\int_{0}^y x^{\nu-\frac{3}{2}} \ 
	dx + y^{\nu + \frac{1}{2}} \int_{y}^{\infty} x^{-\nu - \frac{3}{2}} 
	dx\Br\\
	&= \frac{1}{2\nu} \Bl \frac{1}{\nu - \frac{1}{2}} + \frac{1}{\nu + 
		\frac{1}{2}} \Br =  \Bl \nu^{2} - \frac{1}{4} \Br^{-1}
	\end{aligned}
	\end{equation}
	From there one concludes that 
	\begin{equation}
	\|X^{-2} \circ K\|_{L^2\to L^2}\leq 
	\left( \Bl \nu^2 - \frac{9}{4}\Br \Bl \nu^{2}-\frac{1}{4}\Br\right)^{-\frac{1}{2}} \leq 
	\Bl\nu^2-\frac{9}{4}\Br^{-1}.
	\end{equation} 
	This proves the first estimate. The second and third estimates are established
	ad verbatim.
\end{proof}

\begin{proposition}\label{Schur-2-1}
	Let $\nu \geq \frac{3}{2} + \delta$ for some $\delta>0$ and 
	let $\beta>0$ be positive real number. Consider integral 
	operator $K$ acting on $C^\infty_0(\R_+)$ with integral 
	kernel given in terms of modified Bessel 
	functions by 
	\begin{equation}
	k(x,y) = \left\{ \begin{array}{cc}
	(xy)^{\frac{1}{2}} I_\nu (\beta \ y) K_{\nu}(\beta \ x), & y\leq x,\\
	(xy)^{\frac{1}{2}} I_\nu (\beta \ x) K_{\nu}(\beta \ y), & x\leq y.
	\end{array}
	\right.
	\end{equation}
	Then $X^{-2} \circ K$ defines a bounded operator on $L^2(0,\infty)$	
	and there exists a constant $C>0$ depending only on $\delta>0$ 
	such that 
	\begin{equation}\label{K2-statement}
	\begin{split}
	&\|X^{-2} \circ K \|_{L^2\to L^2}\leq 
	C \Bl\nu^{2}-\frac{9}{4}\Br^{-1}, \\
	&\|(X \partial_x) \circ X^{-2} \circ K \|_{L^2\to L^2}\leq 
	C \Bl\nu-\frac{3}{2}\Br^{-1}, \\
	&\|(X \partial_x)^2 \circ X^{-2} \circ K \|_{L^2\to L^2}\leq C.
	\end{split}
	\end{equation}
	\end{proposition}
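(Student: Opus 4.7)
The plan is to parallel the proof of Proposition~\ref{Schur-1-1}, reducing everything to an application of Schur's test. Two new ingredients are needed: a scaling argument to eliminate $\beta$, and a pointwise comparison between the Bessel kernel and the polynomial kernel of Proposition~\ref{Schur-1-1}.

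\textbf{Step 1: Rescaling.} Under the substitution $u = \beta y$, $v = \beta x$ and the induced $L^2(\R_+)$--isometry $f(y)\mapsto\beta^{-1/2}f(u/\beta)$, a direct computation shows that the operator $X^{-2}\circ K$ with parameter $\beta$ is unitarily conjugate to the same operator with $\beta=1$; similarly for $(X\partial_x)^j\circ X^{-2}\circ K$, $j=1,2$, since $x\partial_x = v\partial_v$ is scale-invariant. Hence it suffices to prove \eqref{K2-statement} in the case $\beta=1$.

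\textbf{Step 2: Kernel comparison.} The key point is the pointwise estimate
\begin{equation*}
(xy)^{1/2}I_\nu(y)K_\nu(x) \,\le\, \frac{1}{2\nu}\Bl\frac{y}{x}\Br^{\nu}(xy)^{1/2},\qquad 0<y\le x,
\end{equation*}
(and symmetrically for $y\ge x$), i.e.\ $k(x,y)\le \tilde k(x,y)$ where $\tilde k$ is the polynomial kernel of Proposition~\ref{Schur-1-1}. This can be seen conceptually by noting that the Bessel kernel is the Green's function of $L_1 = -\partial_x^2 + x^{-2}(\nu^2-1/4) + 1$, the polynomial kernel is the Green's function of $L_0 = -\partial_x^2 + x^{-2}(\nu^2-1/4)$, and since $L_1\ge L_0\ge 0$ with positive Green's kernels, one concludes $k\le\tilde k$ pointwise. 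A self-contained verification can be given from the asymptotics $I_\nu(y)\sim y^\nu/(2^\nu\Gamma(\nu+1))$ and $K_\nu(x)\sim 2^{\nu-1}\Gamma(\nu)x^{-\nu}$ as the argument tends to $0$ (which yield equality of the leading terms), together with the exponential decay of $K_\nu$ at infinity dominating the growth of $I_\nu$. Once this comparison is in hand, Schur's test with the very same test functions as in the proof of Proposition~\ref{Schur-1-1} immediately yields the first bound in \eqref{K2-statement}, possibly with an absolute constant $C>0$.

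\textbf{Step 3: Derivatives.} For the second and third estimates I would use the standard Bessel identities
\begin{equation*}
z\, I_\nu'(z) = z\, I_{\nu-1}(z) - \nu\, I_\nu(z),\qquad z\, K_\nu'(z) = -z\, K_{\nu-1}(z) - \nu\, K_\nu(z),
\end{equation*}
to expand $(x\partial_x)^j$ applied to the kernel in the $x$--variable as a finite linear combination of Bessel kernels of the same shape, but with index $\nu$ shifted by $\pm 1$ or $\pm 2$, and with extra polynomial weights in $x$ that are absorbed by the decay. Each such term admits a bound as in Step~2 (with $\nu$ replaced by the shifted index), and summation produces the factors $(\nu-3/2)^{-1}$ and the absolute constant $C$ in the remaining two estimates.

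\textbf{Main obstacle.} The delicate point is Step~2: justifying the pointwise comparison $k\le\tilde k$ rigorously and uniformly in $\nu\ge 3/2+\delta$. If the operator-theoretic comparison is deemed too slick, the robust alternative is to apply Schur's test directly with the Bessel kernel, splitting the integrals $\int_0^x y^{1/2}I_\nu(y)\,dy$ and $\int_x^\infty y^{1/2}K_\nu(y)\,dy$ into small- and large-argument regimes and using the asymptotics $I_\nu(t)K_\nu(t)\sim (2\nu)^{-1}$ as $t\to 0$ together with exponential decay of $K_\nu$ at infinity; the computation then reproduces the bound $(\nu^2-9/4)^{-1}$ in the small-$x$ regime and a much smaller quantity in the large-$x$ regime, giving the claimed uniform bound up to an absolute constant $C$.
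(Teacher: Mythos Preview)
Your overall strategy---reduce to the polynomial kernel of Proposition~\ref{Schur-1-1} via a pointwise comparison, then invoke Schur's test---is exactly what the paper does. The difference lies in how the comparison is established. For the first estimate, your Green's function (equivalently heat-kernel) argument $L_1^{-1}\le L_0^{-1}$ is correct and considerably more conceptual than the paper's route, which instead uses Olver's uniform asymptotic expansions of $I_\nu$ and $K_\nu$ as $\nu\to\infty$ to derive $|K_\nu(x)I_\nu(y)|\le C\nu^{-1}(y/x)^\nu$ for $y\le x$ by brute force. Both give the same conclusion.

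The gap is in Step~3. The operator $(x\partial_x)$ acts only on the $x$-variable of the kernel, so it differentiates $K_\nu(x)$ (in the region $y\le x$) and $I_\nu(x)$ (in the region $x\le y$) while leaving the $y$-factor untouched. The resulting terms are therefore \emph{mixed-index} products such as $x\,K_{\nu\pm 1}(x)\,I_\nu(y)$, not ``kernels of the same shape with index shifted'' $I_{\nu'}(y)K_{\nu'}(x)$. These mixed products are not Green's functions of any operator, so neither your comparison argument from Step~2 nor a shift of $\nu$ in Proposition~\ref{Schur-1-1} applies to them. The paper confronts precisely this issue: its Olver computation is set up from the start to bound $|K_{\nu+\alpha}(x)I_\nu(y)|$ for $\alpha\in\{0,1\}$, and produces the three estimates
\[
|K_\nu I_\nu|\le \tfrac{C}{\nu}\bigl(\tfrac{y}{x}\bigr)^\nu,\qquad
|xK_{\nu+1}I_\nu|\le C\bigl(\tfrac{y}{x}\bigr)^\nu,\qquad
|yK_\nu I_{\nu-1}|\le C\bigl(\tfrac{y}{x}\bigr)^\nu,
\]
all with the \emph{unshifted} exponent $\nu$, which is exactly what is needed to feed back into Proposition~\ref{Schur-1-1}. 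Your phrase ``extra polynomial weights in $x$ that are absorbed by the decay'' hides precisely this missing work. (For what it is worth, your approach \emph{can} be completed without Olver: the facts that $x^\mu K_\mu(x)$ is decreasing and $y^{-\mu}I_\mu(y)$ is increasing, together with the Wronskian identity $x\bigl(I_\nu K_{\nu+1}+I_{\nu+1}K_\nu\bigr)=1$, give $|xK_{\nu+1}(x)I_\nu(y)|\le (y/x)^\nu$ for $y\le x$ directly. But this is a separate argument that your write-up does not supply.)
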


\begin{proof}
Following Olver \cite[p. 377 (7.16), (7.17)]{Olv:AAS}, 
	we note the asymptotic expansions for Bessel functions
	as $\nu \to \infty$
	\begin{equation}\label{EqBesselestimates}
	I_\nu (\nu x)  \sim \frac{1}{\sqrt{2\pi\nu}}\cdot 
	\frac{e^{\nu \cdot\eta(x)}}{(1+x^2)^{1/4}},\;\; 
	K_\nu (\nu x)  \sim \sqrt{\frac{2\pi}{\nu}}\cdot 
	\frac{e^{-\nu \cdot\eta(x)}}{(1+x^2)^{1/4}}
	\end{equation}
	where $\eta(x) = \sqrt{1+x^2} +\ln \frac{x}{1+\sqrt{1+x^2}}$. By 
	\eqref{variation-bounds-2}, these expansions are uniform in 
	$x\in(0,\infty)$. We define an auxiliary function
	\begin{equation}
	E(x, \nu):=\frac{e^{\nu(\eta(x)-\ln x)}}{(1+x^2)^{\frac{1}{4}}}.	
	\end{equation}
	Note that $\eta(x) - \ln x$ is 
	increasing as $x\to\infty$, since
	\begin{equation}\label{eta-grow}
	\begin{aligned}
	(\eta(x)-\ln x)' &= (\sqrt{1+x^2} - \ln(1+\sqrt{1+x^2})'\\
	&=2x\Bl\frac{1}{2\sqrt{1+x^2}} - \frac{1}{2\sqrt{1+x^2}}\cdot 
	\frac{1}{1+\sqrt{1+x^2}} \Br\\
	&=\frac{x}{\sqrt{1+x^2}}\cdot \frac{\sqrt{1+x^2}}{1+\sqrt{1+x^2}}>0.
	\end{aligned}
	\end{equation}
	Consequently $E(x,-\nu)$ is decreasing and for $y \leq x$
	\begin{equation}\label{EquationKIestimate}
	\begin{aligned}
	\left|K_{\nu+\alpha}(x) \cdot I_\nu(y)\right| &\leq C\cdot 
	\frac{1}{\sqrt{\nu(\nu+\alpha))}} \Bl\frac{y}{\nu}\Br^\nu 
	 \Bl\frac{x}{\nu+\alpha}\Br^{-(\nu+\alpha)} \\ &\times E\left(\frac{y}{\nu},\nu\right)
	\cdot E\left(\frac{y}{\nu+\alpha},-(\nu+\alpha)\right),
	\end{aligned}
	\end{equation}
	for some uniform constant $C>0$ and $\alpha \in \{0,1\}$. In fact, below we will always denote 
	uniform positive constants by $C$. We proceed with a technical calculation
	\begin{equation}
	\begin{aligned}
	&(\nu+\alpha)\Bl\eta\Bl\frac{y}{\nu+\alpha}\Br - \ln 
		\frac{y}{\nu+\alpha} \Br - 
		\nu\Bl\eta\Bl\frac{y}{\nu}\Br - 
		\ln \frac{y}{\nu} \Br \\
	&=\sqrt{(\nu+\alpha)^2 +y^2} - \sqrt{\nu^2 +y^2} + 
		\nu\ln\frac{\nu+\alpha}{\nu} \\
	&- \nu \ln \Bl \frac{\nu+\alpha +  
		\sqrt{(\nu+\alpha)^2+y^2}}{\nu+\sqrt{\nu^2+y^2}} 
		\Br-\alpha\ln \Bl 
		1+\sqrt{1+\Bl \frac{y}{\nu+\alpha}\Br^2} \Br\\
	&=\sqrt{\nu^2 +y^2} 
	\Bl\sqrt{1+\frac{2\alpha\nu+\alpha^2}{\nu^2+y^2}}-1 
		\Br + \nu \ln\Bl 1+\frac{\alpha}{\nu}\Br\\
	&-\nu\ln \Bl 1+ \frac{\alpha}{\nu+\sqrt{\nu^2+y^2}} 
		+\frac{\sqrt{\nu^2+y^2}}{\nu+\sqrt{\nu^2+y^2}}\Bl 
		\sqrt{1+\frac{2\alpha\nu+\alpha^2}{\nu^2+y^2}}-1\Br \Br\\
	&-\alpha \ln \Bl 1 + \sqrt{1+\Bl \frac{y}{\nu+\alpha}\Br^2}\Br.
	\end{aligned}
	\end{equation}
	In order to continue with our estimates we write $O(f)$ for any function 
	whose absolute value is bounded by $f$, with a uniform constant that is 
	independent of $\nu$ and $y$, and note
	\begin{enumerate}
	\item $1+\frac{2\alpha\nu+\alpha^2}{\nu^2+y^2}$ is always positive,
	\item $\ln\frac{\nu+\alpha}{\nu} = \frac{\alpha}{\nu} + O\Bl \frac{1}{\nu^2}\Br$
	where the $O$-constant depends on $\delta>0$,
	\item 
	\begin{align*}
	\ln \Bl 1+ \frac{\alpha}{\nu+\sqrt{\nu^2+y^2}} 
		+\frac{\sqrt{\nu^2+y^2}}{\nu+\sqrt{\nu^2+y^2}}
		\Bl \sqrt{1+\frac{2\alpha\nu+\alpha^2}{\nu^2+y^2}}-1\Br \Br
		\\ = \frac{\alpha}{\nu+\sqrt{\nu^2+y^2}} 
		+\frac{\sqrt{\nu^2+y^2}}{\nu+\sqrt{\nu^2+y^2}} + O\Bl \frac{1}{\nu^2}\Br,
		\end{align*}
		where the $O$-constant may be chosen independently of $y\in (0,\infty)$, but
		depends on $\delta>0$.
	\end{enumerate}
	Plugging in these observations, we arrive at the following estimate,
	\begin{equation}
	\begin{aligned}
	&(\nu+\alpha)\Bl\eta\Bl\frac{y}{\nu+\alpha}\Br - \ln 
		\frac{y}{\nu+\alpha} \Br - 
		\nu\Bl\eta\Bl\frac{y}{\nu}\Br - 
		\ln \frac{y}{\nu} \Br \\
	&=-\alpha\ln\Bl 1+\sqrt{1+\Bl \frac{y}{\nu}\Br^2} \Br + 
	O\bl 1\br.
	\end{aligned}
	\end{equation}
     Plugging this into the 
	estimate \eqref{EquationKIestimate} we obtain:
	\begin{equation*}
	\begin{aligned}
	\left|K_{\nu+\alpha}(x) \cdot I_\nu(y)\right| &\leq C \cdot 
	\frac{1}{\sqrt{\nu(\nu+\alpha))}} \cdot F\Bl \frac{y}{\nu}\Br \cdot 
	 \Bl \frac{y}{x}\Br^\nu \cdot\Bl\frac{\nu+\alpha}{\nu} \Br^\nu \cdot
	\Bl\frac{x}{\nu+\alpha} \Br^{-\alpha} \\ &\leq C \cdot 
	\frac{1}{\sqrt{\nu(\nu+\alpha))}} \cdot F\Bl \frac{y}{\nu}\Br \cdot 
	 \Bl \frac{y}{x}\Br^\nu \cdot \Bl\frac{x}{\nu+\alpha} \Br^{-\alpha}, \\
	 & \textup{where} \ F\Bl \frac{y}{\nu}\Br := \Bl 1+\sqrt{1+\Bl \frac{y}{\nu}\Br^2} 
	\Br^{\alpha} / \sqrt{1+\Bl \frac{y}{\nu}\Br^2},
	\end{aligned}
	\end{equation*}
	for some uniform constants $C>0$, depending only on $\delta$, where in the second inequality 
	we noted that	$\lim\limits_{\nu\to\infty} \Bl \frac{\nu+\alpha}{\nu} \Br^\nu = 
	e^\alpha$ and hence $\Bl \frac{\nu+\alpha}{\nu} \Br^\nu$ is bounded 
	uniformly for large $\nu$. We also note that $(\nu (\nu+\alpha))^{-1} \leq C \nu^{-2}$, as long as 
	$\nu$ and $(\nu+\alpha)$ are positive bounded away from zero. Hence we arrive
	at the following estimate
	\begin{equation*}
	\begin{aligned}
	\left|K_{\nu+\alpha}(x) \cdot I_\nu(y)\right| \leq C \cdot 
	\frac{1}{\nu} \cdot F\Bl \frac{y}{\nu}\Br \cdot 
	 \Bl \frac{y}{x}\Br^\nu \cdot \Bl\frac{x}{\nu+\alpha} \Br^{-\alpha}.	
	 \end{aligned}
	\end{equation*}
Note that for $\alpha = 1$, $F(y/\nu)$ is uniformly bounded
and for $\alpha=0$, $F(y/\nu)\leq C(y/\nu)^{-1}$. Hence we conclude
for $x \geq y$ and some uniform constant $C>0$
		\begin{equation}\label{combinations}
		\begin{split}
	&\left| \, K_{\nu}(\beta \ x) \cdot I_\nu(\beta \ y) \, \right| 
	\leq C \cdot \frac{1}{\nu} \cdot \Bl\frac{y}{x}\Br^{\nu}. \\
	&\left| \, x K_{\nu+1}(\beta \ x) \cdot I_\nu(\beta \ y) \, \right| 
	\leq C \cdot \Bl\frac{y}{x}\Br^{\nu}. \\
	&\left| \, y K_{\nu}(\beta \ x) \cdot I_{\nu - 1}(\beta \ y) \, \right| 
	\leq C \cdot \Bl\frac{y}{x}\Br^{\nu}.	
	 \end{split}
	\end{equation}
By the formulae for the derivatives of modified Bessel functions
	\begin{equation*}
		\begin{split}
	& (x\partial_x) I_\nu(x) = xI_{\nu-1}(x) - \nu I_\nu(x), \\
	& (x\partial_x) K_\nu(x) = \nu K_\nu(x) - xK_{\nu+1}(x),
	 \end{split}
	\end{equation*}
the derivatives $(x\partial_x) k(x,y)$ and $(x\partial_x)^2 k(x,y)$ can be written as
combinations of the products in \eqref{combinations}. 
In view of Proposition \ref{Schur-1-1}, we obtain the result.
\end{proof}

\begin{remark}
The statement of Proposition \ref{Schur-2-1} corresponds to Br\"uning-Seeley 
\cite[Lemma 3.2]{BruSee:TRE}. However, the latter reference does not provide 
an exact lower bound on $\nu$, which is crucial in order to establish the optimal
spectral gap in the spectral Witt condition.
\end{remark}

We close the section with a crucial observation.

\begin{corollary}\label{comp-asymptotics-infinity}
Let $\nu \geq \frac{3}{2} + \delta$ for some $\delta>0$ and let
$K$ denote either the integral operator in Proposition \ref{Schur-1-1}
or in Proposition \ref{Schur-2-1}. Then for any $u \in L^2(\R_+)$ with compact
support in $[0,1]$, $Ku$ admits the following estimates \footnote{$Ku$ is 
continuously differentiable on $(0,\infty)$.}
\begin{align}
| Ku(x) | \leq \frac{C}{\nu} \|u\|_{L^2} \ x^{-1-\delta}, \quad | (x 
\partial_x) Ku(x) |
\leq C \|u\|_{L^2} \ x^{-1-\delta},
\end{align}
for a constant $C>0$ independent of $u$ and $\nu$.
\end{corollary}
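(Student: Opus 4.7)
The plan is to obtain both pointwise bounds by combining the kernel estimates already derived in the proofs of Propositions \ref{Schur-1-1} and \ref{Schur-2-1} with the Cauchy--Schwarz inequality, exploiting that $u$ is supported in $[0,1]$. The common feature is that in both cases the kernel admits a bound of the form $|k(x,y)| \leq C\nu^{-1} x^{1/2-\nu} y^{\nu+1/2}$ for $y \leq x$, with a symmetric bound for $x \leq y$: for Proposition \ref{Schur-1-1} this is immediate from the definition, and for Proposition \ref{Schur-2-1} it is exactly the content of \eqref{combinations} obtained during that proof.

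For the first inequality, when $x \geq 1$ the support condition forces $y \leq x$ throughout the integral, so only one branch of $k$ contributes and a direct application of Cauchy--Schwarz yields
\begin{equation*}
|Ku(x)| \leq \frac{C}{\nu} x^{1/2-\nu} \|u\|_{L^2} \Bl \int_0^1 y^{2\nu+1}\, dy \Br^{1/2} \leq \frac{C'}{\nu^{3/2}} x^{1/2-\nu} \|u\|_{L^2},
\end{equation*}
and since $\nu \geq 3/2 + \delta$ we have $x^{1/2-\nu} \leq x^{-1-\delta}$ for $x \geq 1$. For $x \in (0,1]$ both branches of $k$ contribute, but $x^{-1-\delta} \geq 1$ there, so an analogous crude Cauchy--Schwarz estimate (which yields $|Ku(x)| = O(x^{3/2}\nu^{-3/2})$) is more than enough to absorb.

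For the derivative estimate, I would split $Ku(x)$ as $\int_0^x k(x,y) u(y)\, dy + \int_x^\infty k(x,y) u(y)\, dy$, differentiate each piece, and note that the two boundary contributions cancel thanks to continuity of $k$ at the diagonal $y=x$. For the polynomial kernel this produces an elementary closed-form expression to which Cauchy--Schwarz applies directly. For the Bessel kernel I would invoke the standard identities $(z\partial_z) I_\nu(z) = z I_{\nu-1}(z) - \nu I_\nu(z)$ and $(z\partial_z) K_\nu(z) = \nu K_\nu(z) - z K_{\nu+1}(z)$ and combine them with the three pointwise bounds in \eqref{combinations}. The key arithmetic observation is that the factor $\nu$ appearing from these derivative identities cancels the $\nu^{-1}$ in the $I_\nu K_\nu$ estimate, which is precisely why the bound for $(x\partial_x) Ku$ no longer carries the $\nu^{-1}$ prefactor.

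The main obstacle is the arithmetic bookkeeping in the Bessel derivative case, but it reduces essentially entirely to the pointwise estimates \eqref{combinations} already established in Proposition \ref{Schur-2-1}. A secondary point to verify is the cancellation of the boundary contributions in the diagonal splitting, which depends on the precise continuity of $k(x,y)$ across $x=y$ in both kernels.
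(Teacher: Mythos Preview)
Your approach is correct and essentially matches the paper's: both rest on the kernel bound $|k(x,y)|\le C\nu^{-1}(y/x)^\nu (xy)^{1/2}$ (for the Bessel kernel this is precisely \eqref{combinations}) followed by Cauchy--Schwarz. The paper is slightly more economical: it first reduces the Bessel case to the polynomial one via \eqref{combinations}, then treats only $x>1$, where $Ku(x)=\tfrac{1}{2\nu}x^{1/2-\nu}\int_0^1 y^{\nu+1/2}u(y)\,dy$ is a pure power in $x$ so that $(x\partial_x)Ku$ is immediate without any diagonal splitting; your handling of $x\in(0,1]$ and the boundary cancellation is correct but the paper omits it since only the $x\to\infty$ asymptotics are used downstream.
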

\begin{proof}
It suffices to prove the statement for $K$ in Proposition \ref{Schur-1-1},
since by \eqref{combinations} the integral kernels in Proposition \ref{Schur-2-1},
and their derivatives, can be estimated 
against those in Proposition \ref{Schur-1-1}. Consider $u \in L^2(\R_+)$ 
such that $\supp u \subset [0,1]$. Then for $x>1$ we find using $\nu \geq \frac{3}{2} + \delta$
\begin{align*}
| Ku(x) | &\leq \frac{x^{-\nu+\frac{1}{2}}}{2\nu} \int_0^1 y^{\nu+\frac{1}{2}} | u(y) | dy 
\leq \frac{C}{\nu} \|u\|_{L^2}\ x^{-1-\delta}, \\
| x\partial_x Ku(x) | &\leq \frac{(-\nu+\frac{1}{2})}{2\nu} x^{-\nu+\frac{1}{2}} \int_0^1 y^{\nu+\frac{1}{2}} |u(y)| dy
\leq C \|u\|_{L^2}\ x^{-1-\delta}, \\
\end{align*}
for a constant $C>0$ independent of $u$ and $\nu$.
\end{proof}

\section{Invertibility of the model Bessel operators}

In this section we prove invertibility of  
\begin{equation}
L(y_0,\xi) = \Gamma (\partial_x + X^{-1}S(y_0)) + ic(\xi; y_0),
\end{equation}
cf. \eqref{L-transform}, and its square $L(y_0,\xi)^2$.
We will work with the Sobolev scale $W^s(\R_+, H)$, defined in terms of
the interpolation scale $H^s\equiv H^s(S(y_0))$. As noted in Remark
\ref{different-sobolev-scales}, the interpolation scales $H^s(S(y_0))$ in general depend on 
the base point $y_0\in \R^b$. This does not play a role here, since in the
present section $y_0$ is fixed. 

\begin{proposition}\label{PropositionLSquarenullxi-2}
	Assuming the spectral Witt condition \eqref{Witt-condition}, 
	the mapping $$L(y,0)^2:W^{2,2}(\R_+, H) \to W^{0,0}(\R_+, H),$$ 
	is bijective with bounded inverse.
\end{proposition}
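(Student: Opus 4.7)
The first step is to unpack the operator. For $\xi = 0$ the free term $ic(0,y)$ vanishes, so $L(y,0) = \Gamma(\partial_x + X^{-1} S(y))$, and squaring using the commutator relation $\Gamma S + S \Gamma = 0$ gives
\begin{equation*}
L(y,0)^2 \ = \ -\partial_x^2 + X^{-2}\, S(S+1) \ = \ -\partial_x^2 + X^{-2}\Bl A^2 - \tfrac{1}{4}\Br,
\end{equation*}
where $A = |S(y)| + \tfrac{1}{2}$, by the identity used in \S\ref{abstract-Dirac-section}. Since $S(y)$ is discrete self-adjoint with spectrum avoiding $[-1,1]$, there exists $\delta > 0$ with $\spec A \subset [\tfrac{3}{2} + \delta, \infty)$. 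Decomposing $H = \bigoplus_\nu H_\nu$ into the eigenspaces of $A$, the operator $L(y,0)^2$ reduces on each component $L^2(\R_+) \hot H_\nu$ to the classical Bessel operator $-\partial_x^2 + (\nu^2 - \tfrac{1}{4})/x^2$.

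The second step is to construct the inverse. Let $K_\nu$ denote the integral operator of Proposition \ref{Schur-1-1}, whose kernel is precisely the Green's function of this Bessel operator with the $x^{1/2+\nu}$ behaviour at $x=0$ and the $x^{1/2-\nu}$ decay at infinity. Define $G := \bigoplus_\nu K_\nu \hot I_{H_\nu}$. On each eigenspace, a direct computation on a dense core gives $L(y,0)^2 \circ G = G \circ L(y,0)^2 = \mathrm{id}$, so the identities extend by continuity once the mapping properties below are in place, and bijectivity follows.

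The main step is the boundedness $G : W^{0,0}(\R_+,H) \to W^{2,2}(\R_+,H)$. By Proposition \ref{p.ISHS.12}, $W^{2,2} = X^2 W^2$ with $W^2 = (H_e^2 \hot H) \cap (L^2 \hot H^2(S(y)))$, and the $W^2$-norm is controlled by the $L^2$-norms of $(x \partial_x)^k S^{2-k}$ applied to the element, for $k=0,1,2$. Writing $v = X^{-2} G u$ it therefore suffices, eigenspace by eigenspace, to bound $\nu^{2-k}\|(X\partial_x)^k X^{-2} K_\nu u_\nu\|_{L^2}$ for $k=0,1,2$ uniformly in $\nu \ge \tfrac{3}{2}+\delta$. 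This is exactly what the three estimates
\begin{equation*}
\|X^{-2} K_\nu\| \leq \Bl \nu^2 - \tfrac{9}{4}\Br^{-1}, \quad
\|(X\partial_x) X^{-2} K_\nu\| \leq \Bl \nu - \tfrac{3}{2}\Br^{-1}, \quad
\|(X\partial_x)^2 X^{-2} K_\nu\| \leq C
\end{equation*}
in Proposition \ref{Schur-1-1} provide, since $\nu^2/(\nu^2 - \tfrac{9}{4})$ and $\nu/(\nu - \tfrac{3}{2})$ are uniformly bounded on $[\tfrac{3}{2}+\delta,\infty)$. Boundedness of $L(y,0)^2$ from $W^{2,2}$ to $W^{0,0}$ itself is a short calculation using $x^2 \partial_x^2 = (x\partial_x)^2 - x\partial_x$, and together with the open mapping theorem this yields the claim.

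The hard point I expect to have to verify carefully is the mixed term $\|(x\partial_x)\, S\, X^{-2} G u\|$, which forces the combination of the middle estimate above with the uniform bound $\nu/(\nu - \tfrac{3}{2}) \leq C$ for $\nu \geq \tfrac{3}{2}+\delta$. This is precisely where the exact numerical threshold $\tfrac{3}{2}$ in Proposition \ref{Schur-1-1} is used, and is the structural reason the spectral Witt condition is formulated with the gap $[-1,1]$ rather than the smaller gap used by Albin and Gell-Redman in \cite{Albin-Jesse}.
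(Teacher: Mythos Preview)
Your proposal is correct and follows essentially the same approach as the paper: eigenspace decomposition with respect to $A^2(y)$, explicit Green's kernel for the scalar Bessel operator $-\partial_x^2 + x^{-2}(\nu^2-\tfrac14)$, and the three uniform estimates of Proposition~\ref{Schur-1-1} to control $\nu^{2-k}\|(X\partial_x)^k X^{-2}K_\nu\|$. The only minor presentational difference is that the paper argues injectivity by directly observing that neither fundamental solution $x^{\pm\nu+1/2}$ lies in $W^{2,2}$, whereas you obtain it from the left-inverse identity $G\circ L(y,0)^2=\mathrm{id}$ extended by continuity; both are valid.
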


\begin{proof} 
	Consider the following commutative diagram:
	\begin{equation}
	\xymatrix@-1.75pc{W^{2,2} \ar@<1ex>[rrrr]^{L^2(y,0)} 
	\ar@<1ex>@{-->}[dddd]^{X^{-2}} & & & &
		W^{0,0}\ar@{=}[dddd] \ar@{-->}[llll]^{K(y,0)} \\
		& &  & & \\
		& & \circlearrowright & & \\
		& &  & & \\
		W^{2,0}\ar[rrrr]^{\tilde L^2} \ar[uuuu]^{X^2}& & & & W^{0,0},
		\ar@<1ex>@{-->}[llll]^{\tilde K }}
	\end{equation}
	where $\tilde{L}^2(y,0) = L^2(y,0)\circ X^2$ and the inverse maps $K$ and 
	$\tilde{K}$ are constructed as follows. 
	Let $\{\phi_{j}\}_{j\in\N}$ be an orthonormal base of $H$ 
	consisting of eigenvectors of $A^2(y)$ such that
	$A^2(y) \ \phi_{j} =  \nu_j^2 \ \phi_{j}$,
	where by convention we assume $\nu_j>0$. The geometric Witt condition 
	\eqref{Witt-condition} implies $\nu_j > \frac{3}{2}$ and by discreteness
	we conclude
	\begin{equation}\label{nu-j-delta}
	\exists \, \delta > 0 \ \forall \, j \in \N: \ \nu_j \geq \frac{3}{2}+ \delta.
	\end{equation}
	For any $j\in \N$ we define $E_j := \langle 
	\phi_{j}\rangle$. For any 
	$g \in L^2(\R_+)$ the equation 
	$L^2(y,0)  \ f \cdot \phi_{j} = g \cdot \phi_j \in L^2(\R_+, E_j)$ reduces to a 
	scalar equation	
	\begin{equation}\label{eq1}
	\Bl- \b_x^2 + \frac{1}{x^2} \Bl
	\nu_j^2-\frac{1}{4}\Br \Br  f  = g.	
	\end{equation}
	The fundamental solutions for that equation are
	\begin{equation}
	\psi^+_{\nu_j} (x) = x^{\nu_j + \frac{1}{2}}, \ \text{and} \ 
	\psi^-_{\nu_j} (x) = x^{-\nu_j +\frac{1}{2}}.
	\end{equation}
	In view of \eqref{nu-j-delta}, neither of them lies in $W^{2,2}(\R_+)$
	and hence $L^2(y,0)$ is injective on $W^{2,2}(\R_+,H)$. It remains to prove surjectivity.
	The fundamental solutions $\psi^\pm_{\mu_j}$ yield a 
	solution of the equation \Eqref{eq1} with
	\begin{equation}
	f(x) = \int_{0}^{\infty} k_j(x,y) g(y) dy  =: K_j g,
	\end{equation}
	where $K_j$ is an integral operator with the integral kernel 
	\begin{equation}\label{k-j-bessel-0}
	k_j(x,y) = \left\{\begin{array}{cc}
	\frac{1}{2\nu_j} \bl \frac{y}{x}\br^{\nu_j} (xy)^{\frac{1}{2}}, & y\leq x,\\
	\frac{1}{2\nu_j} \bl \frac{y}{x}\br^{-\nu_j} (xy)^{\frac{1}{2}}, & x\leq y.
	\end{array} \right.
	\end{equation}
	Accordingly, a solution of the scalar equation for any $\tilde g \in L^2(\R_+)$
	$$(L^2(y,0) \circ X^2)  \ \tilde f \cdot \phi_{j}= \tilde g \cdot \phi_{j} \in L^2(\R_+, E_j),$$ is given 
	in terms of $\tilde K_j = X^{-2} \circ K_j$ by $\tilde f = \tilde K_j \tilde g$. 
	
	The integral operators $\tilde K_j$ have been studied in Proposition 
	\ref{Schur-1-1}, which proves in view of \eqref{nu-j-delta}
	that for each $E_j$ the restriction $\tilde{L}(y,0)|_{E_j}$ 
	admits an inverse
	\begin{equation}
	\tilde K_j:W^{0,0}(\R_+,E_j) \to W^{2,0}(\R_+,E_j)
	\end{equation}
	with norm bounded uniformly in $j\in \N$. Equivalently, the restriction $L(y,0)|_{E_j}$ 
	admits an inverse
	\begin{equation}
	K_j : W^{0,0}(\R_+,E_j) \to W^{2,2}(\R_+,E_j)
	\end{equation}
	with norm bounded uniformly in $j\in \N$. By \eqref{K-0-2}, the operator norms of 
	$\nu_j \cdot (X\partial_x) \circ K_j$ and $\nu_j^2 \cdot K_j$ are bounded uniformly in $j$ as well.
	Hence there exists a bounded inverse
	\begin{equation}
	(L^2(y,0))^{-1} : W^{0,0}(\R_+,H) \to W^{2,2}(\R_+,H).
	\end{equation}
	This proves the statement.
\end{proof}

\begin{proposition}\label{PropositionLnotnullxi-2}
     Assume the spectral Witt condition \eqref{Witt-condition}. 
	Then for fixed parameters $(y,\xi) \in \R^b\times \R^b$, the operator 
	$L^2(y,\xi):W^{2,2}(\R_+, H)\to W^{0,0,-2}(\R_+,H)$ is injective with 
	a right-inverse $L^2(y,\xi)^{-1}: W^{0,0}(\R_+, H)\to W^{2,2}(\R_+,H)$, 
	bounded uniformly in the parameters $(y,\xi)$.
\end{proposition}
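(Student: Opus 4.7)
The plan is to imitate the proof of Proposition \ref{PropositionLSquarenullxi-2} very closely, replacing the scalar Euler-type equation by the corresponding scalar modified Bessel equation that carries the additional constant potential $\beta^2 := c(\xi, y_0)^2 \geq 0$. As before, I diagonalize using an orthonormal eigenbasis $\{\phi_j\}_{j\in\N}$ of $A^2(y)$ with eigenvalues $\nu_j^2$, where the spectral Witt condition \eqref{Witt-condition} guarantees $\nu_j \geq 3/2 + \delta$ uniformly in $j$. Restricted to each one-dimensional eigenspace $E_j := \langle \phi_j \rangle$, the equation $L^2(y,\xi)(f \cdot \phi_j) = g \cdot \phi_j$ reduces to the scalar modified Bessel equation
\begin{equation*}
\left(-\partial_x^2 + \frac{\nu_j^2 - 1/4}{x^2} + \beta^2\right) f = g,
\end{equation*}
whose two fundamental solutions are $\sqrt{x}\, I_{\nu_j}(\beta x)$ and $\sqrt{x}\, K_{\nu_j}(\beta x)$.

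For injectivity of $L^2(y, \xi)$ from $W^{2,2}$ into $W^{0,0,-2}$, I observe that $\sqrt{x}\, I_{\nu_j}(\beta x)$ grows exponentially at infinity and hence lies neither in $W^{2,2}$ nor in $W^{0,0,-2}$, while $\sqrt{x}\, K_{\nu_j}(\beta x) \sim x^{1/2 - \nu_j}$ as $x \to 0$ and hence fails the $W^{2,2}$-integrability at zero under the condition $\nu_j \geq 3/2 + \delta$. This is verbatim as in Proposition \ref{PropositionLSquarenullxi-2}.

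For the right-inverse I write the Green's function explicitly: on each $E_j$ the integral operator $K_j$ with kernel $k_j(x,y)$ from Proposition \ref{Schur-2-1} (with Bessel parameter $\beta$) inverts the scalar equation. Setting $\widetilde K_j := X^{-2} \circ K_j$, the commutative diagram of Proposition \ref{PropositionLSquarenullxi-2} reduces bounded invertibility on each eigenspace to the estimates \eqref{K2-statement}. Crucially, the constants there depend only on $\delta$, and in particular are uniform in $\beta \geq 0$ and in $\nu_j$ subject to the Witt bound; therefore $\widetilde K_j$, $(X\partial_x)\widetilde K_j$, and $(X\partial_x)^2 \widetilde K_j$ are bounded on $L^2(\R_+, E_j)$ by $C\nu_j^{-2}$, $C\nu_j^{-1}$, and $C$ respectively, uniformly in $(y, \xi)$.

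The principal remaining step, and the main technical obstacle, is to assemble these block estimates into a single uniform bound on the full $W^{2,2}(\R_+, H)$-norm. By Proposition \ref{p.ISHS.12} the $W^{2,0}$-norm of $\widetilde K g$ decomposes as the intersection of the $H^2_e(\R_+) \hot H$- and $L^2 \hot H^2(S(y))$-norms. The first is controlled by the $(X\partial_x)^2$-bound summed over $j$ using orthogonality of $\{\phi_j\}$. For the second, $S(y)^2$ acts on $E_j$ as multiplication by $(\nu_j - 1/2)^2 \leq \nu_j^2$, which is absorbed exactly by the factor $\nu_j^{-2}$ in the first estimate of \eqref{K2-statement}. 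Multiplying by $X^2$ then maps $W^{2,0}$ into $W^{2,2}$ and produces the desired right-inverse $L^2(y,\xi)^{-1}: W^{0,0} \to W^{2,2}$ with operator norm uniform in $(y, \xi)$.
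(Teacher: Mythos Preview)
Your proof is correct and follows essentially the same route as the paper: diagonalize, reduce to the scalar Bessel equation on each eigenspace, and invoke the uniform kernel estimates of Proposition \ref{Schur-2-1} to assemble the right-inverse. The only point where the paper is slightly more explicit is that it invokes the commutator relations \eqref{Eq1} to simultaneously diagonalize $A^2(y)$ and $c(\xi,y)^2$ (obtaining $c(\hat\xi,y)^2\phi_j=\phi_j$, hence $\beta=\|\xi\|$), whereas you treat $c(\xi,y)^2$ as a scalar $\beta^2$ from the outset; this is justified by the Clifford relations but you should say so, and note that the case $\beta=0$ is covered by Proposition \ref{PropositionLSquarenullxi-2} rather than Proposition \ref{Schur-2-1} (which assumes $\beta>0$).
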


\begin{proof}
The case $\xi = 0$ has been established in Proposition \ref{PropositionLSquarenullxi-2}.
We proceed with the case $\xi \neq 0$. The commutator relations \Eqref{Eq1} imply that $A^2(y)$ 
and $c(\xi,y)^2$ may be simultaneously diagonalized and hence an orthonormal base 
$\{\phi_{j}\}_{j \in \N}$ of $H$ can be chosen such that
	\begin{equation}
		\begin{aligned}
			&A^2(y) \ \phi_{j} = \nu_j^2 \phi_{j},\; \text{where we 
			fix}\;\nu_j>0, \\
			&c(\hat{\xi}, y)^2 \phi_{j} =  
			\phi_{j},\;\text{where}\; \hat{\xi} = 
			\frac{\xi}{\|\xi\|}.
		\end{aligned}
	\end{equation}
	We write $E_j = \langle \phi_{j}\rangle$. Then, similar to 
	Proposition \ref{PropositionLSquarenullxi-2}, $L^2(y,\xi)$ 
	reduces over each $E_j$ to the scalar operator
	\begin{equation}
		L^2(y,\xi)|_{E_j} = -\b_x^2 + X^{-2} \Bl\nu_j^2 - \frac{1}{4}\Br + 
		\|\xi\|^2.
	\end{equation}
	The solutions to $L^2(y,\xi)|_{E_j} \phi = 0$ are given by linear 
	combination of modified Bessel-functions $\sqrt{x} I_{\nu_j}(\|\xi\| x)$
	and $\sqrt{x} K_{\nu_j}(\|\xi\| x)$, which are {\bf not} elements of $W^{2,2}(\R_+)$
	for any $j\in\N$ and any $\xi \not=0$. This proves injectivity of $L^2(y,\xi)$ on
	$W^{2,2}(\R_+,H)$.
	
	For the right-inverse we note the following commutative diagram
	\begin{equation}
	\xymatrix@-1.75pc{W^{2,2} \ar@<1ex>[rrrr]^{L^2(y,\xi)} 
		\ar@<1ex>@{-->}[dddd]^{X^{-2}} & & & &
		W^{0,0,-2}\ar@{=}[dddd] \ar@{-->}[llll]^{K_1(y,\xi)}\\
		& &  & &\\
		& & \circlearrowright & &\\
		& &  & &\\
		W^{2,0}\ar[rrrr]^{\tilde L^2} \ar[uuuu]^{X^2}& & & & W^{0,0,-2} 
		\ar@<1ex>@{-->}[llll]^{\tilde K_1 }}.
	\end{equation}
	The equation $[ L^2(y,\xi)\circ X^2|_{E_j}]f \cdot \phi_j= g \cdot \phi_j\in L^2(\R_+,E_j)$ 
	admits a solution
	\begin{equation}\label{L-inverse-kernel}
		\begin{aligned}
			X^{-2}\circ K_{j}(y,\xi) g &:= \int_{\R_+} 
			x^{-2}\  k_{j}(x,\tilde{x})\ g(\tilde{x}) 
			d\tilde{x},
		\end{aligned}
	\end{equation} 
	 where the kernel $k_{j}(x,\tilde{x})$ is 
	 \begin{equation}\label{k-j-bessel}
	 k_{j}(x,\tilde{x}) = \left\{ \begin{array}{cc}
	 (x\tilde{x})^{\frac{1}{2}} I_{\nu_j} (\|\xi\| \ \tilde{x}) 
	 K_{\nu_j}(\|\xi\| \ 
	 x), 
	 & \tilde{x}\leq x,\\
	 (x\tilde{x})^{\frac{1}{2}} I_{\nu_j} (\|\xi\| \ x) K_{\nu_j}(\|\xi\| \ 
	 \tilde{x}), 
	 & x\leq \tilde{x}.
	 \end{array}
	 \right.
	 \end{equation}
	 Therefore, by Proposition \ref{Schur-2-1}, $X^{-2}\circ 
	 K_{j}$ is bounded, uniformly in $j\in \N$ and $\xi > 0$. Then, 
	 in view of the uniform bounds \eqref{K2-statement}, 
	 $L^2(y,\xi) \circ X^2$ admits a right-inverse $X^{-2} \circ K(y,\xi):  W^{0,0}(\R_+,H) \to W^{2,0} 
	 (\R_+,H)$. Equivalently, $L^2(y,\xi)$ admits a right-inverse $K(y,\xi):  W^{0,0}(\R_+,H) \to W^{2,2} 
	 (\R_+,H)$, which proves the statement in view of continuity at $\xi = 0$.
\end{proof}

\begin{corollary}\label{PropositionLnotnullxi}
Assume the spectral Witt condition \eqref{Witt-condition}.
     Then $$L(y,\xi):W^{1,1}(\R_+,H)\to W^{0,0,-1}(\R_+,H),$$ is 
	injective with right-inverse $L(y,\xi)^{-1}: W^{0,0}(\R_+, H)\to W^{1,1}(\R_+,H)$, 
	bounded uniformly in $(y,\xi)$.
\end{corollary}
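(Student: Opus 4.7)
We deduce the statement from Proposition \ref{PropositionLnotnullxi-2}, which establishes the analogous result for $L(y,\xi)^2$. Let $K(y,\xi):W^{0,0}\to W^{2,2}$ be its right-inverse, uniformly bounded in $(y,\xi)$. We propose as right-inverse of $L(y,\xi)$ the composition
\[
  L(y,\xi)^{-1}:=L(y,\xi)\circ K(y,\xi),
\]
so that $L(y,\xi)\circ L(y,\xi)^{-1}=L(y,\xi)^2\circ K(y,\xi)=I$ on $W^{0,0}(\R_+,H)$. For the injectivity claim, if $u\in W^{1,1}$ satisfies $L(y,\xi)u=0$, then also $L^2u=0$; on each joint eigenspace of $A^2(y)$ and $c(\hat\xi,y)^2$, which are simultaneously diagonalizable thanks to \eqref{Eq1}, this reduces to a scalar Bessel equation whose only nontrivial solutions are multiples of $\sqrt{x}\,I_{\nu_j}(\|\xi\|x)$ and $\sqrt{x}\,K_{\nu_j}(\|\xi\|x)$ (respectively $x^{\pm\nu_j+1/2}$ when $\xi=0$). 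Under the spectral Witt condition $\nu_j\ge\tfrac{3}{2}+\delta$, neither is in $L^2$ near $0$ nor at infinity, forcing $u=0$.

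\textbf{Uniform boundedness.} The main estimate is a coercivity inequality
\begin{equation}\label{eq:coer-plan}
  \|v\|_{W^{1,1}}\le C\,\|L(y,\xi)v\|_{W^{0,0}},\qquad v\in W^{1,1},
\end{equation}
with $C$ independent of $(y,\xi)$; applied to $v=L(y,\xi)^{-1}u$ (for which $L(y,\xi)v=u$), it immediately gives $\|L(y,\xi)^{-1}u\|_{W^{1,1}}\le C\|u\|_{W^{0,0}}$. To prove \eqref{eq:coer-plan}, we first note that $L(y,\xi)$ is formally self-adjoint: since $\Gamma$ is skew-adjoint unitary ($\Gamma^2=-I$), $c(\xi)$ is anti-self-adjoint, and the commutation relations \eqref{Eq1} hold, a direct check gives $(\Gamma\partial_x)^*=\Gamma\partial_x$, $(\Gamma X^{-1}S)^*=\Gamma X^{-1}S$ and $(ic(\xi))^*=ic(\xi)$. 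The same relations ensure that all cross terms in the expansion of $L^2$ cancel, yielding $L^2=-\partial_x^2+X^{-2}(A^2-\tfrac14)+c(\xi)^2$, so that for smooth compactly supported $v$
\[
  \|L(y,\xi)v\|_{W^{0,0}}^2=\langle L^2v,v\rangle=\|\partial_xv\|^2+\bigl\langle X^{-2}(A^2-\tfrac14)v,v\bigr\rangle+\|c(\xi)v\|^2.
\]
The spectral Witt condition gives $A^2-\tfrac14\ge c_\delta A^2$ with $c_\delta>0$ and $A\ge\tfrac{3}{2}+\delta$; since $A$ and $X$ commute, the right-hand side is bounded below by $\|\partial_xv\|^2+c_\delta\|X^{-1}Av\|^2$, which in turn dominates $\|v\|_{W^{1,1}}^2$ via the description of the $W^{1,1}$-norm in Proposition \ref{p.ISHS.12}. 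This establishes \eqref{eq:coer-plan}.

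\textbf{Main obstacle.} The subtle point is that $L(y,\xi)$ itself is not uniformly bounded as a map $W^{1,1}\to W^{0,0,-1}$, because $ic(\xi)$ has norm proportional to $\|\xi\|$, and the naive product bound $\|L\circ K\|\le\|L\|\cdot\|K\|$ therefore grows linearly in $\|\xi\|$. The coercivity estimate \eqref{eq:coer-plan} circumvents this by having the $\|c(\xi)v\|^2$ contribution enter with the \emph{favorable sign}, so that it can simply be discarded on the lower-bound side rather than needing to be absorbed. This is precisely what removes the $\|\xi\|$-dependence from the final estimate.
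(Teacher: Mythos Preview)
Your approach coincides with the paper's: you define the right-inverse as $L\circ(L^2)^{-1}$ and prove injectivity by reducing to the scalar Bessel equation. Your injectivity argument is in fact slightly more economical than the paper's, which diagonalises the triple $S,\Gamma,ic(\xi)$ and reduces $L$ to a $2\times2$ first-order system; passing instead to $L^2u=0$ lets you get away with diagonalising only the commuting pair $A^2,c(\xi)^2$.

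There is, however, a genuine gap in your uniform-boundedness argument. You establish the coercivity identity
\[
  \|L(y,\xi)v\|_{L^2}^2=\|\partial_xv\|^2+\bigl\langle X^{-2}(A^2-\tfrac14)v,v\bigr\rangle+\|c(\xi)v\|^2
\]
only for $v\in C_c^\infty(\R_+,H^\infty)$, and then apply it to $v=L(y,\xi)^{-1}u$, which lies in $W^{1,1}$ but is neither smooth nor compactly supported. Extending the identity by density does not work directly: $C_c^\infty$ is dense in $W^{1,1}$, but $L:W^{1,1}\to W^{0,0,-1}$ is continuous only into the weighted space, not into $L^2$, so you cannot pass to the limit in $\|Lv_n\|_{L^2}$. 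Equivalently, the integration by parts $\langle-\partial_x^2v,v\rangle=\|\partial_xv\|^2$ requires the boundary term $\partial_xv\cdot\bar v$ to vanish at $x=\infty$, and for a generic $v\in W^{1,1}$ this fails.

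The paper confronts exactly this issue, but not in the proof of the corollary itself (which is terse on uniform boundedness). The coercivity computation you give appears instead in the proof of Theorem~\ref{TheoremParametrix}, where it is applied to $v=L(y_0,\xi)^{-1}\hat u(\xi)$ with $\hat u(\xi)$ \emph{compactly supported} in $[0,1]$. The decay $\|v(x)\|_H=O(x^{-1-\delta})$ at infinity, supplied by Proposition~\ref{comp-asymptotics-infinity-prop} (ultimately Corollary~\ref{comp-asymptotics-infinity}), is what kills the boundary term and legitimises the integration by parts. Since the parametrix construction only ever needs $L^{-1}$ on compactly supported data, this restriction is harmless for the applications; but your statement of \eqref{eq:coer-plan} for all $v\in W^{1,1}$ is, as written, unjustified.
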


\begin{proof}
      The commutator relations \Eqref{Eq1} imply that $S,\Gamma$ and $ic(\xi)$ 
	may be simultaneously diagonalized and hence an orthonormal base 
	$\{\phi_{j,\pm}\}$ of $H$ can be chosen such that
		\begin{equation}
	\begin{aligned}
	S\phi_{j,\pm} &= \pm \mu_j \phi_{j,\pm},\; \text{where we fix}\;\mu_j>0, \\
	ic(\hat{\xi}, y) \phi_{j,\pm} &= \pm \phi_{j,\pm},\;\text{where}\; \hat{\xi} = 
	\frac{\xi}{\|\xi\|}\\
	\Gamma\phi_{j,\pm} &= \pm\phi_{j,\mp}.
	\end{aligned}
	\end{equation}
	We define $E_j = \langle \phi_{j,+};\phi_{j,-}\rangle$. Then $L(y,\xi)$ reduces over each $E_j$ to 
	\begin{equation*}
	L(y,\xi)|_{E_j} = \left(
	\begin{array}{cc}
	0 & -I\\
	I & 0
	\end{array}\right)\left[
	\left(\begin{array}{cc}
	\partial_x & 0 \\
	0 & \partial_x
	\end{array}\right)
	+ X^{-1} \left(\begin{array}{cc}
	\mu_j & 0\\
	0 & -\mu_j
	\end{array}\right)
	\right] + \left(
	\begin{array}{cc}
	\|\xi\| & 0 \\
	0 & -\|\xi\|
	\end{array}
	\right).
	\end{equation*}
	Like in \cite[Lemma 3.10]{Albin-Jesse}, solutions to $L(y,\xi)|_{E_j} \phi = 0$ 
	are given by linear combination of modified Bessel-functions,
	which are {\bf not} elements of $W^{1,1}$ for any $j\in\N$ and any $\xi 
	\not=0$. Same can be checked explicitly for $\xi = 0$. This proves 
	injectivity of $L(y,\xi)$.
The right-inverse is obtained by
\begin{equation}
L(y,\xi)^{-1} := L(y,\xi) \circ \left(L^2(y,\xi)\right)^{-1}: L^2(\R_+,H) \to 
W^{1,1} (\R_+,H),
\end{equation}
where the composition is well-defined for $\xi = 0$ by Proposition 
\ref{PropositionLSquarenullxi-2}, and for $\xi \neq 0$ by the fact that 
$\left(L^2(y,\xi)\right)^{-1}$ maps $L^2(\R_+, H)$ to $W^{2,2} \cap L^2(\R_+,H)$, since
$$
L(y,0) \circ \left(L^2(y,\xi)\right)^{-1} = \textup{Id} - \| \xi \|^2 \cdot \left(L^2(y,\xi)\right)^{-1}.
$$
\end{proof}

In the Corollary \ref{PropositionLnotnullxi} there is a certain overlap with the work of Albin and 
Gell-Redman \cite{Albin-Jesse}, where in \cite[Lemma 3.10]{Albin-Jesse}
they assert invertibility of $L(y,\xi)$ for $\xi\neq 0$, 
and do not prove uniform bounds for the inverse. Here,
we invert $L(y,\xi)$ for all $\xi \in \R^b$ and establish 
uniform bounds for the inverse.

\section{Parametrices for generalized Dirac and Laplace operators}

We define subspaces of functions with compact support in $[0,1]$
 \begin{equation*}
 \begin{split}
 &W^\bullet_{\rm comp}(\R_+, H):= \{\phi u \mid  
 u \in W^\bullet (\R_+, H), \phi \in C^\infty_0[0,\infty), \supp \phi \subset [0,1]\}, \\
 &W^\bullet_{\rm comp}(\R_+\times \R^b, H):= \{\phi u \mid   
 u \in W^\bullet (\R_+\times \R^b, H), \phi \in C^\infty_0([0,\infty) \times \R^b), \\
 &   \qquad  \qquad \qquad \qquad \qquad \supp \phi \subset [0,1] \times \R^b\}.
 \end{split}
 \end{equation*}
Subspaces of weighted Sobolev scales, consisting of functions with compact support 
in $[0,1]$ and $[0,1] \times \R^b$ as above, are denoted analogously. The Sobolev 
scales are defined in terms of the interpolation scales $H^s\equiv H^s(S(y_0))$, which a
priori depend on the base point $y_0 \in \R^b$. This does not play a role here, since in the
present section $y_0$ is fixed. 

\begin{proposition}\label{comp-asymptotics-infinity-prop} Assume the spectral Witt condition 
	\eqref{Witt-condition}. Then there exists $\delta>0$ such that 
	for any $u \in W^0_{\rm comp}(\R_+, H)$ and any $\xi \in \R^b$
	\begin{equation*}
	\| L(y_0,\xi)^{-1} u(x) \|_H = O(x^{-1-\delta}), \quad
	\| L^2(y_0,\xi)^{-1} u(x) \|_H = O(x^{-1-\delta}), \quad x\to \infty. 
	\end{equation*}
	In particular, $L(y_0,\xi)^{-1} u$ and $L^2(y_0,\xi)^{-1} u$ are both in $L^2(\R_+,H)$.
	Here, $\| \cdot \|_H$ denotes the norm of the Hilbert space $H$.
\end{proposition}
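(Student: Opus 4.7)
The plan is to diagonalize $H$ by an orthonormal eigenbasis $\{\phi_j\}_{j\in\N}$ of $A^2(y_0)$ with eigenvalues $\nu_j^2$, chosen when $\xi\neq 0$ to also diagonalize $c(\hat\xi,y_0)^2$, exactly as in the proofs of Propositions \ref{PropositionLSquarenullxi-2} and \ref{PropositionLnotnullxi-2}. Writing $u=\sum_j u_j(x)\phi_j$ with $u_j\in L^2(\R_+)$ supported in $[0,1]$ and $\sum_j\|u_j\|_{L^2}^2=\|u\|_{L^2(\R_+,H)}^2$, those propositions give the diagonal formula
\begin{equation*}
L^2(y_0,\xi)^{-1}u(x)=\sum_j (K_j u_j)(x)\,\phi_j,
\end{equation*}
where $K_j$ is the scalar integral operator of Proposition \ref{Schur-1-1} (for $\xi=0$) or Proposition \ref{Schur-2-1} (for $\xi\neq 0$), with index $\nu=\nu_j\ge 3/2+\delta$ and parameter $\beta=\|\xi\|$.

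First I would treat $L^2(y_0,\xi)^{-1}u$. By Corollary \ref{comp-asymptotics-infinity} applied componentwise, $|K_j u_j(x)|\le \tfrac{C}{\nu_j}\|u_j\|_{L^2}\,x^{-1-\delta}$ uniformly in $j$ for $x>1$, so by Parseval
\begin{equation*}
\|L^2(y_0,\xi)^{-1}u(x)\|_H^2=\sum_j|K_j u_j(x)|^2\le C^2 x^{-2-2\delta}\sum_j\frac{\|u_j\|_{L^2}^2}{\nu_j^2}\le \frac{C^2}{(3/2+\delta)^2}x^{-2-2\delta}\|u\|^2,
\end{equation*}
giving the claimed $O(x^{-1-\delta})$ decay. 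For $L(y_0,\xi)^{-1}u$ I would use $L^{-1}=L\circ L^{-2}$ from Corollary \ref{PropositionLnotnullxi}, setting $v:=L^2(y_0,\xi)^{-1}u$ and bounding the three pieces of $L(y_0,\xi)v=\Gamma(\partial_x+X^{-1}S(y_0))v+ic(\xi,y_0)v$ separately. The derivative contribution is handled by the second estimate of Corollary \ref{comp-asymptotics-infinity}: since $|\partial_x K_j u_j(x)|=x^{-1}|(x\partial_x)K_j u_j(x)|\le C\|u_j\|_{L^2}x^{-2-\delta}$, Parseval yields $\|\partial_x v(x)\|_H=O(x^{-2-\delta})$. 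The multiplication by the bounded operator $c(\xi,y_0)$ preserves the $O(x^{-1-\delta})$ decay already established for $v$.

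The main obstacle is the term $X^{-1}S(y_0)v$, because the basis $\{\phi_j\}$ diagonalizes $A^2(y_0)$ but $S(y_0)\phi_j$ need not be a multiple of $\phi_j$. The key observation is that $A(y_0)=|S(y_0)|+\tfrac12\ge\tfrac12$ is nonnegative self-adjoint, so $\{\phi_j\}$ also diagonalizes $|S(y_0)|$ with eigenvalues $\mu_j=\nu_j-\tfrac12$. Self-adjointness of $S(y_0)$ gives the identity $\|S(y_0)w\|_H^2=\langle S(y_0)^2 w,w\rangle=\||S(y_0)|w\|_H^2$ for $w\in\sD(S(y_0))$, so
\begin{equation*}
\|S(y_0)v(x)\|_H^2=\sum_j\mu_j^2|K_j u_j(x)|^2\le C^2 x^{-2-2\delta}\sum_j\frac{\mu_j^2}{\nu_j^2}\|u_j\|_{L^2}^2\le C^2 x^{-2-2\delta}\|u\|^2,
\end{equation*}
using $\mu_j\le\nu_j$. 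Hence $\|X^{-1}S(y_0)v(x)\|_H=O(x^{-2-\delta})$, and combining all three contributions with the unitarity of $\Gamma$ yields $\|L(y_0,\xi)^{-1}u(x)\|_H=O(x^{-1-\delta})$. The square-integrability on $[1,\infty)$ is then immediate from $2+2\delta>1$, while on $[0,1]$ it follows from $L^{\pm 2}$ mapping into the Sobolev scales built on $L^2$.
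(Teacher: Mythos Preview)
Your proof is correct and follows essentially the same strategy as the paper: reduce to the scalar integral operators $K_j$ via the eigendecomposition, invoke Corollary~\ref{comp-asymptotics-infinity} componentwise for $L^2(y_0,\xi)^{-1}u$, and then write $L(y_0,\xi)^{-1}u = L(y_0,\xi)\bigl(L^2(y_0,\xi)^{-1}u\bigr)$ to transfer the decay. The paper's own proof compresses this last step into a single sentence referring back to Corollary~\ref{comp-asymptotics-infinity}; your version is more explicit in decomposing $L(y_0,\xi)v$ into the three pieces $\Gamma\partial_x v$, $\Gamma X^{-1}S(y_0)v$, and $ic(\xi,y_0)v$, and in particular your handling of the $X^{-1}S(y_0)v$ term via $\|S(y_0)w\|_H=\||S(y_0)|w\|_H$ together with the factor $\mu_j/\nu_j\le 1$ is exactly the computation one needs to justify that line.
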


\begin{proof} Consider $u \in W^0_{\rm comp}(\R_+, H)$.
Note first that $L(y_0,\xi)^{-1} u \in W^{1,1} (\R_+, H)$ and
$L^2(y_0,\xi)^{-1} u \in W^{2,2} (\R_+, H)$ by Proposition \ref{PropositionLnotnullxi-2} and
Corollary \ref{PropositionLnotnullxi}. By the characterization \eqref{Sobolev-space-definition}
of Sobolev scales and the Sobolev embedding $H^1_e(\R_+) \subset C(0,\infty)$ into continuous
functions, $L(y_0,\xi)^{-1} u$ and $L^2(y_0,\xi)^{-1} u$ are
continuous on $(0,\infty)$ with values in $H$, and in that sense their 
pointwise evaluations are well-defined. Recall
\begin{equation*}
L^2(y_0,\xi) = -\b_x^2 + X^{-2}\left(A^2(y_0) - \frac{1}{4}\right) + c(\xi,y_0)^2.
\end{equation*}
By the spectral Witt condition, $\textup{Spec} \, A(y_0) \cap [0,\frac{3}{2}] = \varnothing$
and by discreteness of the spectrum there exists $\delta>0$ such that
\begin{equation}\label{spec-delta}
\textup{Spec} \, A(y_0) \cap [0,\frac{3}{2}+\delta) = \varnothing.
\end{equation}
The integral kernel of $L^2(y_0,\xi)^{-1}$ is given 
in terms of \eqref{k-j-bessel} for $\xi \neq 0$ and \eqref{k-j-bessel-0} for $\xi = 0$. 
In view of \eqref{spec-delta}, in both cases, the asymptotics $\| L^2(y_0,\xi)^{-1} u(x) \|_H= O(x^{-1-\delta})$ as 
$x\to \infty$ follows from Corollary \ref{comp-asymptotics-infinity}. The asymptotics
of $\| L(y_0,\xi)^{-1} u(x)\|_H $ now follows also by Corollary \ref{comp-asymptotics-infinity},
once we observe that $L(y_0,\xi)^{-1} u = L(y_0,\xi) (L^2(y_0,\xi)^{-1} u) \in W^{1,1} (\R_+, H)$.
\end{proof}

\begin{theorem}\label{TheoremParametrix}
	Consider $u\in C_0^\infty(\R_+\times \R^b,H^\infty)$ and denote its Fourier 
	transform on $\R^b$ by $\hat{u} (\xi)$. Fix $y_0 \in \R^b$ and 
	consider a generalized Dirac operator $D_{y_0}$ satisfying the spectral Witt condition 
	\eqref{Witt-condition}. We define 
	\begin{equation}
	Qu(y):= \int_{\R^b} e^{i \langle y, \xi\rangle} L(y_0,\xi)^{-1} \hat{u} (\xi) 
	\dj \xi, \quad \dj \xi := \frac{d \xi}{(2\pi)^b}.
	\end{equation}
	Then $Q$ is a right-inverse to $D_{y_0}$ and defines a bounded operator
	\begin{equation}
	Q:W^{0}_{\rm comp}(\R_+\times \R^b,H) \subset W^{0} \to X\cdot W^{1}(\R_+\times\R^b,H) = W^{1,1}.
	\end{equation}
	\end{theorem}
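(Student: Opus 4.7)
The plan is to establish boundedness $Q : W^0_{\rm comp}(\R_+ \times \R^b, H) \to W^{1,1}(\R_+ \times \R^b, H)$ first, and then deduce the right-inverse property by transporting the identity to the Fourier side. Unpacking the identification $W^{1,1} = X \cdot W^1$, a function $w$ lies in $W^{1,1}(\R_+ \times \R^b, H)$ precisely when $x^{-1} w$, $\partial_x w$, $\partial_{y_j} w$ (for each $j = 1,\ldots,b$), and $x^{-1} S(y_0)\, w$ all lie in $L^2(\R_+ \times \R^b, H)$. Plancherel's theorem in the $y$-variable combined with the uniform-in-$\xi$ bound $L(y_0,\xi)^{-1} : L^2(\R_+, H) \to W^{1,1}(\R_+, H)$ from Corollary \ref{PropositionLnotnullxi} immediately handles three of the four conditions; for instance
\begin{align*}
\|x^{-1} Qu\|_{L^2}^2
  = \int_{\R^b} \| x^{-1} L(y_0,\xi)^{-1} \hat u(\xi)\|_{L^2(\R_+, H)}^2 \dj\xi
  \leq C\, \|u\|_{L^2}^2,
\end{align*}
and analogous estimates bound $\|\partial_x Qu\|_{L^2}$ and $\|x^{-1} S(y_0) Qu\|_{L^2}$.

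The main obstacle is the estimate for $\partial_{y_j} Qu$, which Plancherel recasts as $\int_{\R^b} |\xi_j|^2 \|L(y_0,\xi)^{-1} \hat u(\xi)\|_{L^2(\R_+, H)}^2 \dj\xi$; the bare operator bound from Corollary \ref{PropositionLnotnullxi} does not supply the required $|\xi|$-factor. I would extract it from the Clifford structure of $c(\xi, y_0) = \sum_j c_j(y_0)\xi_j$: the Clifford relations on the $c_j(y_0)$ force $c(\xi, y_0)^2$ to be a scalar multiple of $|\xi|^2 \cdot \textup{Id}$, whence $\|c(\xi, y_0)\, h\|_H = |\xi|\, \|h\|_H$ for every $h \in H$ and hence in $L^2(\R_+, H)$ as well. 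Setting $w := L(y_0,\xi)^{-1} v$ and using the splitting $L(y_0,\xi) = L(y_0, 0) + i c(\xi, y_0)$ from \eqref{L-transform}, we obtain
\begin{align*}
|\xi|\, \|w\|_{L^2} = \|c(\xi, y_0)\, w\|_{L^2} \leq \|v\|_{L^2} + \|L(y_0, 0)\, w\|_{L^2} \leq C'\, \|v\|_{L^2},
\end{align*}
where the last step uses that $L(y_0, 0): W^{1,1}(\R_+, H) \to L^2(\R_+, H)$ is bounded (since it is a sum of $\partial_x$ and $X^{-1} S(y_0)$, both controlled by the $W^{1,1}$-norm) and that $\|w\|_{W^{1,1}} \leq C\, \|v\|_{L^2}$ by Corollary \ref{PropositionLnotnullxi}. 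Since $|\xi_j| \leq |\xi|$, the desired estimate follows.

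With boundedness of $Q$ in hand, the right-inverse identity reduces to a computation after Fourier transforming in $y$. Since $Qu \in W^{1,1}(\R_+ \times \R^b, H)$ and all the constituents $\partial_x$, $X^{-1} S(y_0)$, $\partial_{y_j}$ of $D_{y_0}$ are controlled by the $W^{1,1}$-norm, $D_{y_0}$ maps $W^{1,1}$ boundedly into $W^0 = L^2$, so $D_{y_0} Qu \in L^2$ and we may apply $\mathfrak{F}_{y \to \xi}$. Invoking \eqref{L-transform} and the identity $L(y_0, \xi)\, L(y_0, \xi)^{-1} = \textup{Id}$ on $L^2(\R_+, H)$ from Corollary \ref{PropositionLnotnullxi}, we find
\begin{align*}
\mathfrak{F}_{y \to \xi}(D_{y_0} Qu)(\xi) = L(y_0, \xi)\, L(y_0, \xi)^{-1}\, \hat u(\xi) = \hat u(\xi),
\end{align*}
and Fourier inversion in $y$ yields $D_{y_0} Qu = u$.
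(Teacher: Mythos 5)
Your proposal is correct, and for the one genuinely delicate step it takes a different route from the paper. Both proofs reduce, via Plancherel and the uniform bounds of Corollary \ref{PropositionLnotnullxi}, to showing that $\|L(y_0,\xi)^{-1}\hat u(\xi)\|_{L^2(\R_+,H)}\le C(1+\|\xi\|)^{-1}\|\hat u(\xi)\|_{L^2(\R_+,H)}$, which supplies the missing factor for the $\partial_{y_j}$-estimate. The paper gets this from the quadratic-form identity $\|L(y_0,\xi)v\|^2=\|(\partial_x+X^{-1}S(y_0))v\|^2+\|\xi\|^2\|v\|^2\ge\|\xi\|^2\|v\|^2$, which requires an integration by parts; controlling the boundary terms at $x=\infty$ is exactly why the paper restricts to $W^0_{\rm comp}$ and invokes the decay estimates of Proposition \ref{comp-asymptotics-infinity-prop}. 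You instead isolate $ic(\xi,y_0)w=v-L(y_0,0)w$ and use the Clifford isometry $\|c(\xi,y_0)h\|_H=\|\xi\|\,\|h\|_H$ together with the boundedness of $L(y_0,0):W^{1,1}(\R_+,H)\to L^2(\R_+,H)$ and the uniform bound $\|w\|_{W^{1,1}}\le C\|v\|_{L^2}$. This is purely algebraic, avoids the integration by parts and the decay analysis altogether, and in fact does not appear to use the compact-support restriction for the boundedness statement at all — a mild strengthening. The one place where you should be slightly more careful is the starting identity $\|\xi\|\,\|w\|_{L^2}=\|c(\xi,y_0)w\|_{L^2}$: since $W^{1,1}(\R_+,H)\nsubseteq L^2(\R_+,H)$ (see the paper's footnote in Lemma \ref{LemmaDcompmaxmin}), $\|w\|_{L^2}$ is not known to be finite a priori. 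The fix is immediate: for $\xi\ne0$ write $w=(ic(\xi,y_0))^{-1}\bigl(v-L(y_0,0)w\bigr)$, note the right-hand side lies in $L^2$ with norm at most $(1+C)\|v\|_{L^2}$, and use that $c(\xi,y_0)^{-1}$ acts pointwise with operator norm $\|\xi\|^{-1}$; for $\xi=0$ the desired bound on $\xi_j w$ is vacuous. With that caveat noted, the remaining steps (the other three Plancherel estimates and the right-inverse identity via $L(y_0,\xi)L(y_0,\xi)^{-1}=\mathrm{Id}$ on the Fourier side) coincide with the paper's.
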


\begin{proof}
By the Plancherel theorem we find for any $u\in C_0^\infty(\R_+\times \R^b,H^\infty)$
\begin{align*}
\|X^{-1} Q u \|^2_{L^2(\R_+\times \R^b_y, H)}
&= \| X^{-1} L(y_0,\cdot )^{-1} \hat{u} \|^2_{L^2(\R_+\times \R^b_\xi, H)}
\\ &= \int_{\R^b} \|X^{-1} L(y_0,\xi )^{-1} \hat{u}(\xi) \|^2_{L^2(\R_+, H)} \dj \xi.
\end{align*}
By Corollary \ref{PropositionLnotnullxi}, the operator $X^{-1} L(y_0,\xi )^{-1}$
defines a bounded map from $L^2(\R_+, H)$ to itself, with the operator norm bounded
uniformly in $\xi \in \R^b$. Denote its uniform bound by $C>0$ and compute
again by Plancherel theorem
\begin{align*}
\|X^{-1} Q u \|^2_{L^2(\R_+\times \R^b_y, H)}
&= \int_{\R^b} \|X^{-1} L(y_0,\xi )^{-1} \hat{u}(\xi) \|^2_{L^2(\R_+, H)} \dj \xi
\\ & \leq C \int_{\R^b} \|\hat{u}(\xi) \|^2_{L^2(\R_+, H)} \dj \xi = 
C \| u \|^2_{L^2(\R_+\times \R^b_y, H)}.
\end{align*}
Consequently, $Q: L^2(\R_+\times \R^b,H) \to X\cdot L^2(\R_+\times \R^b,H) = W^{0,1}$
is bounded. 

Furthermore, by Corollary \ref{PropositionLnotnullxi} 
the operators $(X\partial_x) \circ X^{-1} L(y_0,\xi )^{-1}$ and  $S \circ X^{-1} L(y_0,\xi )^{-1}$
are bounded on $L^2(\R_+, H)$ and by the same argument as before $(X\partial_x) \circ Q$ and 
$S \circ Q$ define bounded operators from $L^2$ to $W^{0,1}$. In order to prove the 
statement, it remains to establish boundedness of $(X \partial_y) \circ Q: L^2 \to W^{0,1}$.

For $u \in L^2_{\rm comp}(\R_+\times \R^b,H)$ with compact support in $[0,1] \times \R^b$, its Fourier 
transform $\hat{u} (\xi)$ in the $\R^b$ component, is still an element of $L^2_{\rm comp}(\R_+,H)$ 
with compact support in $[0,1]$. By Corollary \ref{PropositionLnotnullxi} there exists a preimage 
$v = L(y_0,\xi)^{-1} \hat{u} (\xi)$,
and by Proposition \ref{comp-asymptotics-infinity-prop} its norm in $H$ is $O(x^{-1-\delta})$
as $x\to \infty$ for some $\delta>0$. In particular, $v \in L^2(\R_+,H)$.
We compute using commutator relations \Eqref{Eq1},
	\begin{equation}
	\begin{aligned}
	\langle \ L(y_0,\xi) v, &L(y_0,\xi) v \ \rangle_{L^2} =\langle L(y_0,\xi)^2 v , 
	v \rangle_{L^2}\\
	&=\langle (-\partial_x^2 + X^{-2}(S(y_0)^2+S(y_0))) v , v \rangle_{L^2} + \|\xi 
	\|^2\cdot \| v \|^2_{L^2}\\
	&=\|(\partial_x+X^{-1}S(y_0)) v \|^2_{L^2} + \|\xi \|^2\cdot \| v \|^2_{L^2}\\
	&\geq \|\xi \|^2\cdot \| v \|^2_{L^2},
	\end{aligned}
	\end{equation}
	where boundary terms at $x=0$ do not arise due to the weight
	$x$ in $W^{1,1} = X W^{1,0}$. Boundary terms at $x=\infty$ 
	do not arise since $\| v(x) \|_H =O(x^{-1-\delta})$ as $x\to \infty$ for some $\delta>0$. 
	We arrive at the following estimate
	\begin{equation}
	\frac{\|L(y_0,\xi)^{-1} \hat{u} (\xi)\|_{L^2}}{\| \hat{u} (\xi) \|_{L^2}} = 
	\frac{\|L(y_0,\xi)^{-1}L(y_0,\xi) v \|_{L^2}}{\| L(y_0,\xi) v \|_{L^2}} = 
	\frac{\| v \|_{L^2}}{\|L(y_0,\xi) v \|_{L^2}} \leq \|\xi\|^{-1}.
	\end{equation}
	By continuity at $\xi = 0$ we conclude for some constant $C>0$
	\begin{equation}
	\|L(y_0,\xi)^{-1} \hat{u} (\xi)\|_{L^2} \leq C \cdot (1+\|\xi\|) ^{-1} \| \hat{u} (\xi)\|_{L^2}.
	\end{equation}
	We may now estimate for any $u \in W^{0}_{\rm comp}(\R_+\times \R^b,H)$
\begin{align*}
\|X^{-1} (X \partial_{y_i}) Q u \|^2_{L^2(\R_+\times \R^b_y, H)}
&= \int_{\R^b} \| \xi_i \cdot L(y_0,\xi )^{-1} \hat{u}(\xi) \|^2_{L^2(\R_+, H)} \dj \xi
\\ &\leq C \int_{\R^b} \frac{\| \xi \|^2}{\| 1+\xi \|^2} \| \hat{u}(\xi) \|^2_{L^2(\R_+, H)} \dj \xi
\\ &\leq C \| u \|^2_{L^2(\R_+\times \R^b_y, H)}.
\end{align*}
This finishes the proof.
\end{proof}

We point out that it is precisely the fact that we have established invertibility of $L(y_0,\xi)$
for any $\xi \in \R^b$ instead of $\xi \neq 0$, which allows us to write down the parametrix
$Q$ explicitly using Fourier transform and establish its mapping properties as a simple 
consequence of the Plancherel theorem. In case of $L(y_0,\xi)$ being invertible only for 
$\xi \neq 0$ the parametrix construction needs to take care of a singularity at $\xi=0$ via
cutoff functions, in which case one cannot deduce its mapping properties by a simple 
application of the Plancherel theorem and is forced to employ an operator valued version
of the theorem by Calderon and Vaillancourt \cite{CaVa}. \medskip

We conclude with construction of a parametrix for $D_{y_0}^2$,
cf. Theorem \ref{TheoremParametrix}.

\begin{theorem}\label{TheoremParametrix-2}
      Assume the spectral Witt condition \eqref{Witt-condition}.
	Consider $u\in C_0^\infty(\R_+\times \R^b,H^\infty)$ and denote its Fourier 
	transform on $\R^b$ by $\hat{u} (\xi)$. Fix $y_0 \in \R^b$ and 
	consider the square $D^2_{y_0}$ of a generalized Dirac operator. We define 
	\begin{equation}
	Q^2u(y):= \int_{\R^b} e^{i \langle y, \xi\rangle} (L^2(y_0,\xi))^{-1} \hat{u} (\xi) 
	\dj \xi, \quad \dj \xi := \frac{d \xi}{(2\pi)^b}.
	\end{equation}
	Then $Q^2$ is a right-inverse to $D^2_{y_0}$ and defines a bounded operator
	\begin{equation}
	Q^2:W^{0}_{\rm comp}(\R_+\times \R^b,H) \subset W^{0} \to X^2\cdot 
	W^{2}(\R_+\times\R^b,H) = W^{2,2}.
	\end{equation}
	\end{theorem}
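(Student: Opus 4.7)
The plan is to parallel the proof of Theorem \ref{TheoremParametrix}, now invoking the second-order mapping property of $L^2(y_0,\xi)^{-1}$ from Proposition \ref{PropositionLnotnullxi-2} in place of Corollary \ref{PropositionLnotnullxi}, together with an additional quantitative $\|\xi\|^{-2}$-decay estimate. The right-inverse property $D^2_{y_0} Q^2 = \mathrm{Id}$ on $C_0^\infty(\R_+\times\R^b, H^\infty)$ is immediate from the Fourier representation $D^2_{y_0} = \mathfrak{F}^{-1}_\xi\circ L^2(y_0,\xi)\circ \mathfrak{F}_\xi$ and the identity $L^2(y_0,\xi) L^2(y_0,\xi)^{-1}=\mathrm{Id}$. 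The substance of the proof lies in showing that $Q^2$ maps $W^0_{\rm comp}(\R_+\times\R^b, H)$ boundedly into $W^{2,2}(\R_+\times\R^b, H)$.

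By the commutator relations \eqref{Eq1} one has the factorization $L^2(y_0,\xi) = (\partial_x + X^{-1}S(y_0))^*(\partial_x + X^{-1}S(y_0)) + \|\xi\|^2 I$. Given $u\in W^0_{\rm comp}$, the Fourier transform $\hat{u}(\xi)$ has $x$-support in $[0,1]$, so by Proposition \ref{comp-asymptotics-infinity-prop} the function $v := L^2(y_0,\xi)^{-1}\hat{u}(\xi)$ satisfies $\|v(x)\|_H = O(x^{-1-\delta})$ as $x\to\infty$. Integration by parts is thus justified at both endpoints (at $x=0$ via the $X^2$-weight of $W^{2,2}$, exactly as in the proof of Theorem \ref{TheoremParametrix}), giving
\begin{equation*}
\langle L^2(y_0,\xi)v, v\rangle_{L^2} = \|(\partial_x + X^{-1}S(y_0))v\|^2_{L^2} + \|\xi\|^2 \|v\|^2_{L^2} \geq \|\xi\|^2 \|v\|^2_{L^2}.
\end{equation*}
Cauchy--Schwarz combined with the uniform bound from Proposition \ref{PropositionLnotnullxi-2} at $\xi=0$ yields $\|L^2(y_0,\xi)^{-1}\|_{L^2(\R_+,H)\to L^2(\R_+,H)} \leq C(1+\|\xi\|)^{-2}$.

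Proposition \ref{PropositionLnotnullxi-2} also provides $\|L^2(y_0,\xi)^{-1}\|_{L^2\to W^{2,2}(\R_+,H)} \leq C$ uniformly in $\xi$. Interpolating on the Hilbert-space scale $\{W^{s,s}(\R_+,H)\}_{0\leq s\leq 2}$ of Proposition \ref{p.ISHS.12} between these two endpoints one obtains
\begin{equation*}
\|L^2(y_0,\xi)^{-1}\|_{L^2(\R_+,H)\to W^{s,s}(\R_+,H)} \leq C(1+\|\xi\|)^{-(2-s)}, \qquad 0\leq s\leq 2.
\end{equation*}
Under the Fourier transform $X\partial_{y_j}$ becomes multiplication by the scalar $iX\xi_j$, hence every composition $V_1' V_2' X^{-2}Q^2$ with $V_i'\in\{\mathrm{Id}, X\partial_x, S(y_0), X\partial_{y_j}\}$ corresponds to an operator-valued Fourier multiplier of the form $i^{|\beta|}\xi^\beta V^\alpha X^{-|\alpha|} L^2(y_0,\xi)^{-1}$, where $|\alpha|+|\beta|=2$ and $V^\alpha$ is a monomial of degree $|\alpha|$ in $\{X\partial_x, S(y_0)\}$. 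By the interpolation bound with $s=|\alpha|$, the factor $V^\alpha X^{-|\alpha|} L^2(y_0,\xi)^{-1}$ has operator norm on $L^2(\R_+,H)$ dominated by $C(1+\|\xi\|)^{-|\beta|}$, which exactly compensates $|\xi^\beta|\leq(1+\|\xi\|)^{|\beta|}$. Plancherel on $\R^b_\xi$ then yields uniform $L^2(\R_+\times\R^b,H)$-bounds on every such composition, which is tantamount to $Q^2: W^0_{\rm comp}\to W^{2,2}$.

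The hard part will be the mixed-derivative case, combining one $y$-direction factor with one $X\partial_x$ or $S(y_0)$ factor: here neither the $L^2$-decay endpoint nor the $W^{2,2}$-regularity endpoint is adequate on its own, and the interpolation step is indispensable. A secondary technical point is the justification of the boundary vanishing at $x=\infty$ in the energy identity, which is precisely what Proposition \ref{comp-asymptotics-infinity-prop} was set up to supply.
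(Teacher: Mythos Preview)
Your overall strategy is sound and parallels the paper's proof closely, and your Cauchy--Schwarz derivation of $\|L^2(y_0,\xi)^{-1}\hat u\|_{L^2}\le C(1+\|\xi\|)^{-2}\|\hat u\|_{L^2}$ is in fact a clean shortcut compared with the paper, which obtains the same bound by applying the first-order energy inequality $\|L(y_0,\xi)w\|\ge\|\xi\|\,\|w\|$ twice, once to $v=L^2(y_0,\xi)^{-1}\hat u$ and once to $w=L(y_0,\xi)v$ (using Proposition~\ref{comp-asymptotics-infinity-prop} to ensure that $w$, too, has $O(x^{-1-\delta})$ decay so that the integration by parts is again justified).

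The one genuine gap is the interpolation step for the mixed derivatives. Proposition~\ref{p.ISHS.12} asserts that the \emph{unweighted} scale $W^s(\R_+,H)$ is an interpolation scale; it says nothing about the weighted scale $W^{s,s}=X^sW^s$, and the latter is not obviously one --- indeed $W^{s,s}(\R_+,H)\not\subset L^2(\R_+,H)$ for $s>0$ because of growth at infinity, so the basic hypothesis $H^{s'}\hookrightarrow H^s$ of Proposition~\ref{p.ISHS.3} already fails. Thus the asserted bound $\|L^2(y_0,\xi)^{-1}\|_{L^2\to W^{1,1}}\le C(1+\|\xi\|)^{-1}$ does not follow from the two endpoints by the interpolation machinery of \S\ref{s.ISHS} as you cite it. The paper sidesteps this entirely: having established $\|L(y_0,\xi)v\|_{L^2}\le\|\xi\|^{-1}\|L^2(y_0,\xi)v\|_{L^2}$, it combines this with the uniform bound $L(y_0,\xi)^{-1}:L^2\to W^{1,1}$ from Corollary~\ref{PropositionLnotnullxi} (and injectivity of $L(y_0,\xi)$ on $W^{1,1}$) to get $\|v\|_{W^{1,1}}\le C\|L(y_0,\xi)v\|_{L^2}\le C(1+\|\xi\|)^{-1}\|\hat u\|_{L^2}$ directly, which is exactly what the mixed term $(X\partial_y)V\,Q^2$ needs. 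Replacing your interpolation paragraph by this two-line factorisation through the first-order parametrix would close the gap.
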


\begin{proof}
By the Plancherel theorem we find for any $u\in C_0^\infty(\R_+\times \R^b,H^\infty)$
\begin{align*}
\|X^{-2} \circ Q^2 u \|^2_{L^2(\R_+\times \R^b_y, H)}
&= \| X^{-2} \circ (L^2(y_0,\cdot ))^{-1} \hat{u} \|^2_{L^2(\R_+\times \R^b_\xi, H)}
\\ &= \int_{\R^b} \|X^{-2} \circ (L^2(y_0,\xi ))^{-1} \hat{u}(\xi) \|^2_{L^2(\R_+, H)} \dj \xi.
\end{align*}
By Proposition \ref{PropositionLnotnullxi-2}, the operator $X^{-2} (L^2(y_0,\xi ))^{-1}$
defines a bounded map from $L^2(\R_+, H)$ to itself, with the operator norm bounded
uniformly in $\xi \in \R^b$. Denote its uniform bound by $C>0$ and compute
again by Plancherel theorem
\begin{align*}
\|X^{-2} \circ Q^2 u \|^2_{L^2(\R_+\times \R^b_y, H)}
&= \int_{\R^b} \|X^{-2} \circ (L^2(y_0,\xi ))^{-1} \hat{u}(\xi) \|^2_{L^2(\R_+, H)} \dj \xi
\\ & \leq C \int_{\R^b} \|\hat{u}(\xi) \|^2_{L^2(\R_+, H)} \dj \xi = 
C \| u \|^2_{L^2(\R_+\times \R^b_y, H)}.
\end{align*}
Consequently, $Q^2: L^2(\R_+\times \R^b,H) \to X^2\cdot L^2(\R_+\times \R^b,H) = W^{0,2}$
is bounded. Furthermore, by Proposition \ref{PropositionLnotnullxi-2} we find for any
$$ V_1, V_2 \in \mathcal{K} := \{(X\partial_x), S\},$$
that the operators $V_1 \circ X^{-2} \circ (L^2(y_0,\xi ))^{-1}$ and 
$V_1 \circ V_2 \circ X^{-2} \circ (L^2(y_0,\xi ))^{-1}$
are bounded on $L^2(\R_+, H)$. By the same argument as before, 
$V_1 \circ Q^2$ and $V_2 \circ V_2 \circ Q^2$
define bounded operators from $L^2(\R_+\times \R^b,H)$ to 
$W^{0,2}(\R_+\times \R^b,H)$. In order to prove the 
statement, it remains to establish boundedness of $(X \partial_y) \circ V_1 \circ Q^2$ 
and $(X \partial_y)^2 \circ Q^2$
as maps from $L^2(\R_+\times \R^b,H)$ to $W^{0,2}(\R_+\times \R^b,H)$.
\medskip

For $u \in W^{0}_{\rm comp}(\R_+\times \R^b,H)$ with compact support in $[0,1] \times \R^b$, its Fourier 
transform $\hat{u} (\xi)$ in the $\R^b$ component, is still an element of $W^{0}_{\rm comp}(\R_+,H)$
with compact support in $[0,1]$.
By Proposition \ref{PropositionLnotnullxi-2}, $v = L^2(y_0,\xi)^{-1} \hat{u} (\xi) \in 
W^{2,2}(\R_+,H)$. By Proposition \ref{comp-asymptotics-infinity-prop}, $\| v(x) \|_H = O(x^{-1-\delta})$
as $x\to \infty$ for some $\delta>0$. In particular, $v \in L^2(\R_+,H)$.
We compute using commutator relations \Eqref{Eq1},
	\begin{equation}
	\begin{aligned}
	\langle \ L(y_0,\xi) v, \, &L(y_0,\xi) v \ \rangle_{L^2} =\langle L(y_0,\xi)^2 v , 
	v \rangle_{L^2}\\
	&=\langle (-\partial_x^2 + X^{-2}(S(y_0)^2+S(y_0))) v , v \rangle_{L^2} + \|\xi 
	\|^2\cdot \| v \|^2_{L^2}\\
	&=\|(\partial_x+X^{-1}S(y_0)) v \|^2_{L^2} + \|\xi \|^2\cdot \| v \|^2_{L^2}\\
	&\geq \|\xi \|^2\cdot \| v \|^2_{L^2},
	\end{aligned}
	\end{equation}
	where there are no boundary terms after integration by parts.
	More precisely, boundary terms at $x=0$ do not arise due to the weight
	$x^2$ in $W^{2,2} = X^2 W^{2,0}$. Boundary terms at $x=\infty$ 
	do not arise since $\| v(x) \|_H = O(x^{-1-\delta})$ as $x\to \infty$.
	
	By Proposition \ref{comp-asymptotics-infinity-prop}, $L(y_0,\xi) v \in W^{1,1}(\R_+,H)$
	with the asymptotic expansion $\| L(y_0,\xi) v(x) \|_H = O(x^{-1-\delta})$ as $x\to \infty$
	as well. Hence, in the estimates above, we can replace $v$ with $w = L(y_0,\xi) v$
	and still conclude
	\begin{equation}
	\langle \ L(y_0,\xi) w, L(y_0,\xi) w \ \rangle_{L^2} \geq \|\xi \|^2\cdot \| w \|^2_{L^2}.
	\end{equation}
	We arrive at the following estimate
	\begin{equation*}
	\frac{\|L^2(y_0,\xi)^{-1} \hat{u} (\xi)\|_{L^2}}{\| \hat{u} (\xi) \|_{L^2}} = 
	\frac{\| v \|_{L^2}}{\|L^2(y_0,\xi) v \|_{L^2}} \leq 
	\|\xi\|^{-1} \frac{\| v \|_{L^2}}{\|L(y_0,\xi) v \|_{L^2}}
	\leq \|\xi\|^{-2}.
	\end{equation*}
	By continuity at $\xi = 0$ we conclude for some constant $C>0$
	\begin{equation}
	\|L^2(y_0,\xi)^{-1} \hat{u} (\xi)\|_{L^2} \leq C \cdot (1+\|\xi\|) ^{-2} \| \hat{u} (\xi)\|_{L^2}.
	\end{equation}
	We may now estimate for any $u \in W^{0}_{\rm comp}(\R_+\times \R^b,H)$
\begin{align*}
\|X^{-2} (X \partial_{y_i}) (X \partial_{y_j}) Q^2 u \|^2_{L^2(\R_+\times \R^b_y, H)}
&= \int_{\R^b} \| \xi_i \xi_j \cdot L^2(y_0,\xi )^{-1} \hat{u}(\xi) \|^2_{L^2(\R_+, H)} \dj \xi
\\ &\leq C \int_{\R^b} \frac{\| \xi \|^2}{(1+\| \xi \|)^2} \| \hat{u}(\xi) \|^2_{L^2(\R_+, H)} \dj \xi
\\ &\leq C \| u \|^2_{L^2(\R_+\times \R^b_y, H)}.
\end{align*}
Similar estimate holds for $(X \partial_y) V \, Q^2 u$ with $V \in \mathcal{K}$.
This finishes the proof.
\end{proof}

\section{Minimal domain of a Dirac Operator on an abstract edge}\label{min-domain-Dirac-section}

We now employ the previous parametrix construction in order
to deduce statements on the minimal and maximal domains of $D_{y_0}$ 
and consequently for $D$. Recall $H=H(S(y_0))$ and the basic definitions of minimal and maximal domains.
As noted in Remark \ref{different-sobolev-scales},
the interpolation scales $H^s(S(y))$ and $H^s(S(y_0))$ coincide for $0\leq s \leq 1$. 

\begin{definition}\label{minmax} 
The maximal and minimal domain of $D$ are defined as follows:
	\begin{equation*}
	\begin{aligned}
	\sD(D_{\max})&:=\{u\in L^2(\R_+\times \R^b,H)\ | \ 
	Du\in L^2(\R_+\times\R^b,H) \}\\
	\sD(D_{\min})&:=\{u\in\sD(D_{\max}) \ | \ \exists (u_n)\subset 
	C_0^\infty(\R_+\times \R^b,H^\infty) \\ & \qquad \qquad \qquad \qquad \text{with}\; u_n \stackrel{L^2}{\to} u,\; 
	Du_n \stackrel{L^2}{\to} Du    \}.
	\end{aligned}
	\end{equation*}
\end{definition}

Using smooth cutoff functions we define localized versions of domains:
\begin{equation}\label{localized-domains}
\begin{aligned}
\sD_{\rm comp}(D_{\max})&:=\{\varphi u\ | u\in \sD(D_{\max}), 
\varphi \in C^\infty_0([0,\infty) \times \R^b) \},\\
\sD_{\rm comp}(D_{\min})&:=\{\varphi u\ | u\in \sD(D_{\min}), 
\varphi \in C^\infty_0([0,\infty) \times \R^b)\},\\
W^{s,\delta}_{\rm \comp}(\R_+\times \R^b,H)&:=\{\varphi u\ | u\in X^\delta W^s, 
\varphi \in C^\infty_0([0,\infty) \times \R^b)\},
\end{aligned}
\end{equation}
where in each case we additionally require\footnote{Restriction of the support to be in 
$[0,1] \times \R^b$ is necessary to achieve uniformity of the estimates in Corollary 
\ref{comp-asymptotics-infinity} and for the consequence in Proposition \ref{comp-asymptotics-infinity-prop}
to hold.} $\supp \varphi \subset [0,1] \times \R^b$.
One checks directly from the definitions
\begin{equation}
\sD_{\rm comp}(D_{\max / \min}) \subseteq \sD(D_{\max/\min}).
\end{equation}
The maximal and minimal domains $\sD(D_{y_0, \, \max}), \sD(D_{y_0, \, \min})$
and their respective localized versions $\sD_{\rm comp}(D_{y_0, \, \max}), \sD_{\rm comp}(D_{y_0, \, \min})$
are defined analogously.
\begin{lemma}\label{LemmaDcompmaxmin}
	$\sD_{\rm comp}(D_{y_0, \, \max / \min}) \subseteq W^{1,1}_{\rm comp}(\R_+\times 
	\R^b,H)$.
\end{lemma}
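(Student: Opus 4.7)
The plan is to show that any $u \in \sD_{\rm comp}(D_{y_0, \max})$ coincides with $Qf$ for $f = D_{y_0} u$, where $Q$ is the parametrix from Theorem \ref{TheoremParametrix}; this will immediately yield $u \in W^{1,1}$ and hence, together with the compact support, membership in $W^{1,1}_{\rm comp}$. Writing $u = \varphi w$ with $w \in \sD(D_{y_0, \max})$ and $\varphi \in C^\infty_0([0,\infty)\times\R^b)$ supported in $[0,1]\times \R^b$, one checks that $u \in L^2$ with compact support in $[0,1]\times\R^b$, and since $D_{y_0}$ is a differential operator, $f := D_{y_0} u$ has the same support property; in particular $f \in W^0_{\rm comp}(\R_+\times\R^b, H)$. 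Theorem \ref{TheoremParametrix} then gives $Qf \in W^{1,1}$ and $D_{y_0}(Qf) = f$, so $v := u - Qf \in L^2$ satisfies $D_{y_0} v = 0$ in the distributional sense.

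The crux, and the step I expect to be the main obstacle, is to rule out nontrivial $L^2$ distributional solutions of $D_{y_0} v = 0$. My plan is to take partial Fourier transform in $y$: since $v$ is tempered, Plancherel and Fubini give that for almost every $\xi \in \R^b$ the slice $\hat v(\cdot,\xi)$ lies in $L^2(\R_+, H)$ and satisfies $L(y_0, \xi)\hat v(\cdot,\xi) = 0$ as a distribution on $\R_+$. Decomposing along the joint eigenbasis $\{\phi_{j,\pm}\}$ of $S(y_0), \Gamma$ and $ic(\hat\xi,y_0)$ used in the proof of Corollary \ref{PropositionLnotnullxi}, on each two-dimensional eigenspace this reduces to a $2\times 2$ first-order system with coefficients smooth on $(0,\infty)$; by elliptic regularity away from $x=0$, distributional solutions are smooth and thus linear combinations of the two classical fundamental solutions. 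For $\xi\neq 0$ these are built from $\sqrt{x}\,I_{\nu_j}(\|\xi\|x)$ and $\sqrt{x}\,K_{\nu_j}(\|\xi\|x)$, and for $\xi = 0$ from $x^{\pm\nu_j + 1/2}$. Under the spectral Witt condition $\nu_j \geq 3/2 + \delta$, an explicit check then shows that neither fundamental solution lies in $L^2(\R_+)$: the $K_{\nu_j}$ and $x^{-\nu_j + 1/2}$ pieces fail integrability near $x=0$, while the $I_{\nu_j}$ and $x^{\nu_j + 1/2}$ pieces fail integrability at infinity. Hence $\hat v(\cdot,\xi) = 0$ for almost every $\xi$, so $v = 0$ and $u = Qf \in W^{1,1}$.

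Since $\supp u$ is compact in $[0,1]\times\R^b$, choosing a cutoff equal to one on $\supp u$ exhibits $u$ as an element of $W^{1,1}_{\rm comp}(\R_+\times\R^b, H)$, giving the inclusion for the maximal domain. The inclusion for the minimal domain is automatic from $\sD_{\rm comp}(D_{y_0, \min}) \subseteq \sD_{\rm comp}(D_{y_0, \max})$. Apart from the vanishing argument for $v$, everything in this proof is formal bookkeeping using the mapping properties of $Q$ established in Theorem \ref{TheoremParametrix}, and the fiberwise indicial analysis is essentially the one already carried out to prove injectivity of $L(y_0,\xi)$ in Corollary \ref{PropositionLnotnullxi}, strengthened from $W^{1,1}$ to $L^2$ thanks to the strict lower bound $\nu_j \geq 3/2 + \delta$.
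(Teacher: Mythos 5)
Your proof is correct in outline but takes a genuinely different route from the paper at the decisive step. You reduce the problem to showing that $v=u-Q(D_{y_0}u)$ is an $L^2$ distributional solution of $D_{y_0}v=0$ and then kill $v$ by a fiberwise indicial analysis after Fourier transform in $y$. The paper never touches the $L^2$-kernel of $D_{y_0}$: it tests $u$ against $\phi=D_{y_0}^t(Q^t\phi)$ for an arbitrary test function $\phi$, inserts a cutoff $\psi\equiv 1$ on $\supp u\cup\supp\phi$ so that the commutator term $\langle u,[\psi,D_{y_0}^t]Q^t\phi\rangle$ vanishes, and integrates by parts to get $\langle u,\phi\rangle=\langle QD_{y_0}u,\phi\rangle$ directly, i.e.\ $u=Q(D_{y_0}u)$ as distributions. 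The duality argument buys you independence from any uniqueness statement for the homogeneous equation; your argument buys a more concrete picture but at the price of two points you must actually supply. First, $Qf\in L^2$ is not automatic from $Qf\in W^{1,1}$ --- the paper explicitly notes $W^{1,1}(\R_+\times\R^b,H)\nsubseteq L^2(\R_+\times\R^b,H)$ because of the weight $x$ at infinity --- but it does follow for compactly supported $f$ from the bound $\|L(y_0,\xi)^{-1}\hat f(\xi)\|_{L^2}\le C(1+\|\xi\|)^{-1}\|\hat f(\xi)\|_{L^2}$ established inside the proof of Theorem \ref{TheoremParametrix}; you should cite this rather than assert $v\in L^2$. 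Second, the $L^2(\R_+,E_j)$-triviality of $\ker L(y_0,\xi)|_{E_j}$ is strictly stronger than the $W^{1,1}$-injectivity recorded in Corollary \ref{PropositionLnotnullxi} (neither space contains the other), so it must be checked. It is true, but with a caveat on your phrasing: the solution on $E_j$ is a two-component vector whose components are Bessel functions of orders $\nu_j$ and $\nu_j-1$ (with $\nu_j\ge\frac32+\delta$); the order-$(\nu_j-1)$ component of the recessive ($K$-type) solution behaves like $x^{3/2-\nu_j}$ near $x=0$ and may well be square integrable there, so the exclusion rests on the order-$\nu_j$ component $\sim x^{1/2-\nu_j}$, which is not. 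With these two points made explicit, and the standard Fubini--Plancherel argument justifying the a.e.-$\xi$ slicing of the distributional equation, your proof goes through.
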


\begin{proof}
Since $\sD_{\rm comp}(D_{y_0, \, \min}) \subseteq
\sD_{\rm comp}(D_{y_0, \, \max})$, it suffices
to show 
\begin{equation*}
\sD_{\rm comp}(D_{y_0, \, \max}) \subseteq W^{1,1}_{\rm comp}(\R_+\times \R^b,H).
\end{equation*}
Note that the differential expression $D_{y_0}$ induces two mappings
\begin{align*}
&D_{y_0} : \sD (D_{y_0, \, \max}) \to L^2(\R_+\times \R^b,H), \\
&D_{y_0} : W^{1,1}(\R_+\times \R^b,H) \to L^2(\R_+\times \R^b,H),
\end{align*}
where the former is an unbounded self-adjoint operator in the Hilbert
space $L^2(\R_+\times \R^b,H)$, and the latter is a bounded operator 
between Sobolev spaces\footnote{Note that 
$W^{1,1}(\R_+\times \R^b,H) \nsubseteq L^2(\R_+\times \R^b,H)$.}.
Theorem \ref{TheoremParametrix} provides the right inverse 
$Q:L^2_{\rm comp}(\R_+\times \R^b,H)\to 
W^{1,1}(\R_+\times \R^b,H)$ to the latter mapping, but not to the former. 
More precisely, we only have
\begin{equation*}
\forall \, u \in L^2_{\rm comp}(\R_+\times \R^b,H): \quad D_{y_0} (Q u) = u.
\end{equation*}
The same holds for the formal adjoints $D^t_{y_0}$ and $Q^t$ in $L^2(\R_+\times \R^b,H)$
\begin{align*}
&D^t_{y_0} : W^{1,1} \to L^2, \quad Q^t:L^2_{\rm comp}\to W^{1,1}\\
&\forall \, u \in L^2_{\rm comp}(\R_+\times \R^b,H): \quad D^t_{y_0} (Q^t u) = u.
\end{align*}
Consider $u \in \sD_{\rm comp}(D_{y_0, \, \max})$ and a test function 
$\phi \in C_0^\infty(\R_+\times \R^b,H^\infty)$. We fix a smooth cutoff function 
$\psi \in C_0^\infty(\R_+\times \R^b,H^\infty)$, such that $\psi \equiv 1$ on 
$\supp u \cup \supp \phi$. We compute with $L^2=L^2(\R_+\times \R^b,H)$
\begin{align*}
\langle u, \phi \rangle_{L^2} &= \langle u, \psi  D^t_{y_0} (Q^t \phi ) \rangle_{L^2}
\\ &= \langle u, [\psi, D^t_{y_0}] (Q^t \phi ) \rangle_{L^2} + \langle u, D^t_{y_0} \psi (Q^t \phi ) \rangle_{L^2}
\end{align*}
Note that $\supp \, [\psi, D^t_{y_0}]$ is by construction disjoint from $\supp u$ and 
consequently the first summand above is zero. Using $u \in \sD_{\rm comp}(D_{y_0, \, \max})$
we can integrate by parts and conclude
\begin{align*}
\langle u, \phi \rangle_{L^2} &= \langle u, D^t_{y_0} \psi (Q^t \phi ) \rangle_{L^2}
\\ &= \langle D_{y_0} u, \psi (Q^t \phi ) \rangle_{L^2} = \langle D_{y_0} u, Q^t \phi \rangle_{L^2}
\\ &= \langle Q D_{y_0} u, \phi \rangle_{L^2}.
\end{align*}
We conclude that $u = Q(D_{y_0}u)$ as distributions. 
By Theorem \ref{TheoremParametrix} 	
\begin{equation}\label{QD}
	u = Q(D_{y_0}u)\in W^{1,1}_{\rm comp}(\R_+\times \R^b,H).
	\end{equation}
\end{proof}

\begin{corollary}\label{Corollaryequalitymaxmin}
	$\sD_{\rm comp}(D_{y_0, \, \min}) = \sD_{\rm comp}(D_{y_0, \, \max}) = W^{1,1}_{\rm 
	comp}(\R_+\times \R^b,H)$ .
\end{corollary}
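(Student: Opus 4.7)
Lemma \ref{LemmaDcompmaxmin} already gives
$\sD_{\rm comp}(D_{y_0, \, \min}) \subseteq \sD_{\rm comp}(D_{y_0, \, \max}) \subseteq W^{1,1}_{\rm comp}$,
so only the reverse inclusion $W^{1,1}_{\rm comp} \subseteq \sD_{\rm comp}(D_{y_0, \, \min})$
needs proof. Given $u \in W^{1,1}_{\rm comp}$ with $\supp u \subset [0,1]\times \R^b$, the plan
is to approximate $u$ in the graph norm of $D_{y_0}$ by a sequence
$u_n \in C_0^\infty(\R_+\times \R^b, H^\infty)$. The underlying observation is that
$D_{y_0}: W^{1,1} \to L^2$ is bounded: inspecting
$D_{y_0} = \Gamma(\partial_x + X^{-1} S(y_0)) + T_{y_0}$ together with
$W^{1,1} = X\cdot W^{1,0}$ shows that each of the three summands maps $W^{1,1}$
continuously into $L^2$, so that it suffices to approximate in a slightly stronger sense
than $L^2$.

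\emph{Stage 1: cut off near $x=0$.} Choose $\chi \in C^\infty(\R_+)$ with $\chi(x)=0$
for $x\leq 1$ and $\chi(x)=1$ for $x\geq 2$, and set $\chi_\epsilon(x) := \chi(x/\epsilon)$.
Then $\chi_\epsilon u \to u$ in $L^2$ by dominated convergence, and the only
nontrivial piece of $D_{y_0}(\chi_\epsilon u) - \chi_\epsilon D_{y_0} u$ is the commutator
$\Gamma \chi_\epsilon'$. Writing $u = xv$ with $v \in W^{1,0} \subseteq L^2$, the weight
$x \sim \epsilon$ on the support of $\chi_\epsilon' = \epsilon^{-1}\chi'(\cdot/\epsilon)$ exactly cancels
the $\epsilon^{-1}$ blow-up:
\begin{equation*}
\|\Gamma \chi_\epsilon' \, u\|_{L^2}^2 \;\leq\; 4 \,\|\chi'\|_\infty^2 \int_{\epsilon}^{2\epsilon}\int_{\R^b}
\|v(x,y)\|_H^2 \, dy\, dx \;\longrightarrow\; 0.
\end{equation*}
This is the decisive use of the weight in $W^{1,1}$.

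\emph{Stage 2: spectral truncation in $H$.} Let $P_N$ be the spectral projection of
$|S(y_0)|$ onto $[0, N]$. By \eqref{Eq1}, $\Gamma$ anticommutes with $S(y_0)$ and hence
commutes with $S(y_0)^2$, with $|S(y_0)|$, and with $P_N$; while $T_{y_0}$ commutes with
$S(y_0)$ and thus with $P_N$. Therefore $P_N$ commutes with $D_{y_0}$, and
$D_{y_0}(P_N \chi_\epsilon u) = P_N D_{y_0}(\chi_\epsilon u) \to D_{y_0}(\chi_\epsilon u)$
in $L^2$ as $N\to \infty$; simultaneously $P_N \chi_\epsilon u \to \chi_\epsilon u$ in $L^2$.
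The result takes values in the finite-dimensional subspace $P_N H \subset H^\infty$.

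\emph{Stage 3: mollify in $(x,y)$.} With support in $[\epsilon, 1]\times K$
for some compact $K\subset \R^b$ and values in the finite-dimensional
$P_N H$, the function $P_N\chi_\epsilon u$ lies in ordinary $H^1(\R_+\times \R^b; P_N H)$,
and convolution with a standard mollifier produces $C_0^\infty(\R_+\times \R^b, H^\infty)$
approximations converging in $H^1$. On this function class the coefficients of $D_{y_0}$
are bounded (since $X \geq \epsilon$ and $S(y_0)$ acts boundedly on $P_N H$), so
$H^1$-convergence forces graph-norm convergence. A standard diagonal extraction over
$\epsilon \downarrow 0$, $N \uparrow \infty$, and the mollification parameter assembles the
required sequence $u_n$. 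The main obstacle is Stage 1: without the weight $X$ in $W^{1,1}$
the commutator $[D_{y_0}, \chi_\epsilon]$ would blow up like $\epsilon^{-1}$ in $L^2$, so
it is precisely the definition $W^{1,1} = X \cdot W^{1,0}$ that makes the cutoff admissible.
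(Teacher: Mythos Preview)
Your proof is correct, but it takes a more hands-on route than the paper. The paper's argument is a two-liner: it asserts that $C_0^\infty(\R_+\times\R^b,H^\infty)$ is dense in $W^{1,1}_{\rm comp}$ (treated as a standard fact about weighted Sobolev/interpolation spaces), observes that $D_{y_0}:W^{1,1}_{\rm comp}\to L^2$ is continuous, and concludes that any $W^{1,1}$-approximating sequence is automatically a graph-norm approximating sequence, so $u\in\sD_{\rm comp}(D_{y_0,\min})$.

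You instead construct the approximating sequence explicitly and verify graph-norm convergence directly, via cutoff near $x=0$, spectral truncation in $H$, and mollification in $(x,y)$. In effect your three stages \emph{prove} the density (or rather the weaker graph-norm approximation) that the paper simply invokes. Both arguments share the key point that $D_{y_0}:W^{1,1}\to L^2$ is bounded, and your Stage~1 isolates exactly why the weight in $W^{1,1}=X\cdot W^{1,0}$ is what makes the cutoff commutator harmless. Your Stage~2 is a nice use of the anticommutation/commutation relations \eqref{Eq1}: note that $[T_{y_0},P_N]=0$ relies on $[c_j(y_0),S(y_0)]=0$, which indeed follows from $[T,S]=0$ by reading off the first-order coefficients. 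Your approach is longer but fully self-contained; the paper's is slicker but leans on the density assertion as known.
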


\begin{proof}
By Lemma \ref{LemmaDcompmaxmin} it suffices to show that 
$W^{1,1}_{\rm comp}(\R_+\times \R^b,H)$ is included in $\sD_{\rm comp}(D_{y_0, \, \min})$.
Note that $D_{y_0}: W^{1,1}_{\rm comp}(\R_+\times \R^b,H) 
\to L^2(\R_+\times \R^b,H)$ is continuous, and $C_0^\infty(\R_+\times \R^b,H^\infty)
\subset W^{1,1}_{\rm comp}(\R_+\times \R^b,H)$ is dense.
Consider $u\in W^{1,1}_{\rm comp}(\R_+\times \R^b,H)$ and some $(u_n)\subset 
C^\infty_0(\R_+\times \R^b,H^\infty)$ such that $u_n \stackrel{W^{1,1}}{\rightarrow} u$. 
By continuity, $D_{y_0}u_n \to D_{y_0} u$ in $L^2$. Hence by definition, 
$u\in \sD_{\rm comp}(D_{y_0, \, \min})$.
\end{proof}

Now we want to extend this statement to a perturbation of $D_{y_0}$
\begin{equation}
P = \Gamma(\partial_x + X^{-1}S(y)) + T + V 
:=D_{y_0} + D_{1,y_0}
\end{equation}
where $V:W^{1,1}(\R_+ \times \R^b, H) \to 
W^{0,1}(\R_+ \times \R^b, H)$ is a bounded linear operator, preserving
compact supports and usually referred to as a higher order term.

\begin{theorem}\label{TheoremDcompPmimPmax}
Assume in addition to the spectral Witt condition \eqref{Witt-condition} that 
\begin{equation}\label{smooth-family}
\partial_y S(y)(|S(y_0)|+1)^{-1}
\end{equation}
are bounded operators on $H$ for any $y, y_0\in \R^b$.
Then 
\begin{equation}
\sD_{\rm comp}(P_{\min}) = \sD_{\rm comp}(P_{\max}) = 
W^{1,1}_{\rm comp}(\R_+ \times \R^b, H).
\end{equation}
\end{theorem}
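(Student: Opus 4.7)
The plan is to mirror Corollary \ref{Corollaryequalitymaxmin}, proving the two non-trivial inclusions $W^{1,1}_{\rm comp}\subseteq \sD_{\rm comp}(P_{\min})$ and $\sD_{\rm comp}(P_{\max})\subseteq W^{1,1}_{\rm comp}$, since $\sD_{\rm comp}(P_{\min})\subseteq \sD_{\rm comp}(P_{\max})$ is automatic. For the first inclusion I would verify that $P\colon W^{1,1}_{\rm comp}\to L^2$ is bounded and then invoke density of $C_0^\infty$ exactly as in the proof of Corollary \ref{Corollaryequalitymaxmin}. The only new ingredient is boundedness of $D_{1,y_0}=\Gamma X^{-1}(S(y)-S(y_0))+(T-T_{y_0})+V$: the $V$-term is bounded $W^{1,1}\to W^{0,1}\hookrightarrow L^2$ by hypothesis; the term $T-T_{y_0}$ has smooth coefficients vanishing at $y_0$ applied to $\partial_y$-derivatives which lie in $L^2$ for $u\in W^{1,1}$; and the critical term I would factor as $X^{-1}(S(y)-S(y_0)) = (S(y)-S(y_0))(|S(y_0)|+1)^{-1}\cdot X^{-1}(|S(y_0)|+1)$, whereupon the hypothesis \eqref{smooth-family} together with the fundamental theorem of calculus yields uniform boundedness of the first factor on compact $y$-supports, while the second factor maps $W^{1,1}$ to $L^2$ directly from the identification $W^{1,1}=X\cdot W^{1,0}$.

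For the reverse inclusion $\sD_{\rm comp}(P_{\max})\subseteq W^{1,1}_{\rm comp}$ the plan is to repeat the duality argument of Lemma \ref{LemmaDcompmaxmin} with $P$, $P^t$ in place of $D_{y_0}$, $D_{y_0}^t$ and with right-parametrices $Q_P, Q_{P^t}\colon L^2_{\rm comp}\to W^{1,1}$ in place of $Q$, $Q^t$. Assuming these are in hand, for $u\in\sD_{\rm comp}(P_{\max})$ and a test function $\phi\in C_0^\infty$, a cutoff $\psi\equiv 1$ on $\supp u\cup\supp\phi$ gives
\begin{equation*}
\langle u,\phi\rangle_{L^2}
= \langle u,\psi P^t(Q_{P^t}\phi)\rangle
= \langle Pu,\psi Q_{P^t}\phi\rangle
= \langle Pu,Q_{P^t}\phi\rangle
= \langle Q_P(Pu),\phi\rangle,
\end{equation*}
where the commutator $[\psi,P^t]$ is multiplication by a function supported off $\supp u$ and therefore contributes nothing. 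This identifies $u=Q_P(Pu)\in W^{1,1}_{\rm comp}$ as distributions, as desired.

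The parametrices $Q_P$, $Q_{P^t}$ I would construct by a Neumann series from $Q$: from $PQ = I+D_{1,y_0}Q$ on $L^2_{\rm comp}$, set $Q_P := Q(I+D_{1,y_0}Q)^{-1}$, provided $I+D_{1,y_0}Q$ is invertible on $L^2_{\rm comp}$. To obtain invertibility I would localize via a partition of unity $\{\chi_i\}$ in $y$ subordinate to balls $B(y_i,\epsilon)$ of small radius and reinterpret each $\chi_i u$ as a perturbation of $D_{y_i}$; the terms $\Gamma X^{-1}(S(y)-S(y_i))$ and $T-T_{y_i}$ then have operator norm $O(\epsilon)$ as maps $W^{1,1}_{\rm comp}\to L^2$, by the same factorization as above combined with $|y-y_i|\le\epsilon$ on the support. \emph{The main obstacle is the $V$-term}, which does not shrink with $\epsilon$: I would handle it by showing that $VQ$ is a compact operator on $L^2_{\rm comp}$, exploiting that $V$ maps into $W^{0,1}=X\cdot L^2$ and that $W^{0,1}_{\rm comp}$ embeds compactly into $L^2$ on compact supports via a Rellich-type argument combined with the edge regularity inherited from $Q$. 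Then $I+D_{1,y_i}Q$ becomes a small-norm perturbation of $I+K$ with $K$ compact, hence Fredholm of index zero on $L^2_{\rm comp}$, and its invertibility reduces to injectivity, which one checks directly using the spectral Witt condition as in Propositions \ref{PropositionLSquarenullxi-2} and \ref{PropositionLnotnullxi-2}.
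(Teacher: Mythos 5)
Your first inclusion ($W^{1,1}_{\rm comp}\subseteq \sD_{\rm comp}(P_{\min})$) is fine and matches the paper's Step 2/Step 4 in spirit: continuity of $P$ on $W^{1,1}_{\rm comp}$ plus density of $C^\infty_0$. The factorization $X^{-1}(S(y)-S(y_0))=(S(y)-S(y_0))(|S(y_0)|+1)^{-1}\cdot(|S(y_0)|+1)X^{-1}$ together with \eqref{smooth-family} and the fundamental theorem of calculus is exactly the mechanism the paper uses.

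The reverse inclusion is where your route diverges and where there is a genuine gap. First, your premise that ``the $V$-term does not shrink with $\epsilon$'' is the opposite of the structural fact the paper exploits: $V$ maps $W^{1,1}$ into $W^{0,1}=X\cdot L^2$, so $X^{-1}VQ\psi$ is bounded on $L^2$, and localizing with $\phi$ supported in $\{x<\epsilon\}$ gives $\|\phi VQ\psi\|\le \sup_{\supp\phi}x\cdot\|X^{-1}VQ\psi\|\le C\epsilon$. Once all three pieces of $\phi D_{1,y_0}Q\psi$ have norm $O(\epsilon)$, one gets $\|\phi D_{1,y_0}u\|\le q\|D_{y_0}u\|$ with $q<1$, hence equivalence of the graph norms of $D_{y_0}$ and $D_{y_0}+\phi D_{1,y_0}$ and equality of their (localized, minimal and maximal) domains; a partition of unity in $(x,y)$ and the identity $P(\psi_j u)=(D_{y_j}+\phi_j D_{1,y_j})\psi_j u$ then give $\sD_{\rm comp}(P_{\max})\subseteq W^{1,1}_{\rm comp}$ without ever constructing a parametrix for $P$. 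This Kato--Rellich-type argument is the paper's proof. Your Fredholm detour, by contrast, rests on two claims that are neither proved nor obviously true: (i) compactness of $VQ$ on $L^2_{\rm comp}$ --- note that $V$ is merely an abstract bounded map $W^{1,1}\to W^{0,1}$ preserving compact supports, multiplication by $X$ is bounded but not compact, and the Rellich-type embedding you invoke for Hilbert-space-valued edge Sobolev spaces over the non-compact base $\R_+\times\R^b$ is not available in this abstract setting without additional hypotheses; and (ii) injectivity of $I+D_{1,y_0}Q$, which does not follow from the spectral Witt condition --- that condition controls the model operator $L(y_0,\xi)$, not the perturbed operator $P$, whose kernel could a priori be nontrivial. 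There is also the technical point that $L^2_{\rm comp}$ is an inductive limit rather than a Banach space, so ``Fredholm of index zero on $L^2_{\rm comp}$'' and the Neumann-series inverse need reinterpretation. Finally, in your duality computation the commutator $[\psi,P^t]$ contains $[\psi,V^t]$, and $V$ is not assumed local, so the claim that this commutator is ``multiplication by a function supported off $\supp u$'' requires justification.
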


\begin{proof} 
\textbf{Step 1:} Consider $u\in C_0^\infty((0,\infty)\times \R^b, H^\infty)$ and 
smooth cutoff functions $\phi, \psi \in C_0^\infty([0,\infty)\times \R^b)$ 
taking values in $[0,1]$, such that $\supp\phi \subset [0,\epsilon) 
\times B_\epsilon(y_0)$ and $\psi \restriction \supp u \equiv 1$.
We compute using \eqref{QD}
	\begin{equation}
	\begin{aligned}
		\|\phi D_{1,y_0} u\|_{L^2} &= \|\phi D_{1,y_0} Q D_{y_0} u\|_{L^2}
		= \|\phi D_{1,y_0} Q \psi D_{y_0} u\|_{L^2}\\
		&\leq \|\phi D_{1,y_0} Q \psi\|_{L^2\to L^2} \cdot \|D_{y_0} 
		u\|_{L^2}.
	\end{aligned}
	\end{equation}
	In order to estimate the norm of $\phi D_{1,y_0} Q \psi$, note that
	\begin{equation}
	\begin{aligned}
	D_{1,y_0} Q  & = \Gamma \, X^{-1} \left(S(y)-S(y_0)\right) Q + \left(T-T_{y_0}\right) Q +  V Q
	\\ & = \Gamma \, X^{-1} (y-y_0) \int_0^1 \frac{\partial S}{\partial 
	t}\left(y_0 + t(y-y_0)\right) dt\ Q
	\\ & + (y-y_0) \int_0^1 \frac{\partial T}{\partial t}\left(y_0 + 
	t(y-y_0)\right)dt \ Q + VQ.
	\end{aligned}
	\end{equation}
	In view of the assumption \eqref{smooth-family} and boundedness of 
	the higher order term $V:W^{1,1} \to W^{0,1}$ we conclude from 
	Theorem \ref{TheoremParametrix} that
	\begin{equation*}
	X^{-1} \frac{\partial S}{\partial t}\left(y_0 + t(y-y_0)\right) Q \psi, 
	\quad \frac{\partial T}{\partial t}\left(y_0 + t(y-y_0)\right) \circ Q \psi, 
	\quad X^{-1} V \circ Q \psi
	\end{equation*}
	are bounded operators on $L^2(\R_+ \times \R^b, H)$ with bound uniform in $t\in [0,1]$
	and $\psi$. Hence we conclude for some unform constant $C>0$
	\begin{equation}
	\begin{aligned}
	\|\phi D_{1,y_0} Q \psi \|_{L^2\to L^2} \leq C \left( \sup_{q\in \supp \phi} x(q) 
	+  \sup_{q\in \supp \phi} \|y(q) -y_0\| \right) \leq 2 \epsilon C.
	\end{aligned}
	\end{equation}
	Thus we may choose $\epsilon>0$ sufficiently small such that
	\begin{equation}
	\|\phi  D_{1,y_0} u\|_{L^2} \leq q \cdot \|D_{y_0} u\|_{L^2}, \;\; 
	\text{for}\;\; q<1.
	\end{equation}
Then the following inequalities hold for $u\in C^\infty_0 (\R_+\times 
	\R^b, H^\infty)$,
	\begin{equation}
	\begin{aligned}
	\|(D_{y_0} + \phi D_{1,y_0})u\|_{L^2} &\leq \|D_{y_0} u\|_{L^2} + q \|D_{y_0} 
	u\|_{L^2}\\ &\leq (1+q)\cdot \|D_{y_0} u\|_{L^2}.
	\end{aligned}
	\end{equation}
	On the other hand
	\begin{equation}
	\begin{aligned}
	\|D_{y_0} u\|_{L^2} &\leq \|(D_{y_0}+\phi D_{1,y_0})u\|_{L^2} + \|\phi D_{1,y_0} 
	u\|_{L^2}\\
	&\leq \|(D_{y_0}+\phi D_{1,y_0})u\|_{L^2} + q\|D_{y_0} u\|_{L^2}. \\
	\Rightarrow \ \|D_{y_0} u\|_{L^2} &\leq (1-q) ^{-1}\|(D_{y_0}+\phi D_{1,y_0})u\|_{L^2}.
	\end{aligned}
	\end{equation}
	Thus the graph-norms of $D_{y_0}$ and $(D_{y_0}+\phi D_{1,y_0})$ are equivalent and 
	hence their minimal domains coincide. Same statement holds for the maximal as well as the localized domains. Thus we have the following equalities. 
	\begin{equation}\label{a}
	\begin{split}
	 &\sD_{\rm comp}(D_{y_0, \, \min})  = \sD_{\rm comp}((D_{y_0} + \phi D_{1,y_0})_{\min}), \\
	 &\sD_{\rm comp}(D_{y_0, \, \max})  = \sD_{\rm comp}((D_{y_0} + \phi D_{1,y_0})_{\max}).
	 \end{split}
	\end{equation}
	The equalities continue to hold for a cutoff function 
	$\phi \in C_0^\infty((0,\infty)\times \R^b)$ such that for some $x_0 > \epsilon$,
	$\supp\phi \subset (x_0 - \epsilon, x_0 + \epsilon) \times B_\epsilon(y_0)$ by
	a similar argument. \medskip
	
	\textbf{Step 2:} We now prove the following inclusion
	\begin{equation}\label{c}
	\sD_{\rm comp}((D_{y_0} + \phi D_{1,y_0})_{\min}) 
	\subseteq \sD_{\rm comp}((D_{y_0} + D_{1,y_0})_{\min}).
	\end{equation} 
	Indeed, for any $u\in \sD_{\min}(D_{y_0} + \phi D_{1,y_0})$ there exists $(u_n)\subset 
	C^\infty_0((0,\infty)\times \R^b, H^\infty)$ converging to $u$ in the graph norm of 
	$(D_{y_0} + \phi D_{1,y_0})$. By \eqref{a} and Corollary \ref{Corollaryequalitymaxmin},
	$(u_n)$ converges to $u$ in $W^{1,1}$. Hence, using continuity of 
	$D_{1,y_0}: W^{1,1}\to L^2$ we conclude
	\begin{equation*}
	\begin{aligned}
	(D_{y_0}+D_{1,y_0})u_n &= (D_{y_0} + \phi D_{1,y_0})u_n + (1-\phi)D_{1,y_0} u_n\\ 
	&\stackrel{L^2}{\rightarrow} (D_{y_0}+\omega D_{1,y_0})u + (1-\phi)D_{1,y_0} u = (D_{y_0} + 
	D_{1,y_0})u.
	\end{aligned}
	\end{equation*}	
	Hence $u\in \sD_{\rm comp}((D_{y_0} + D_{1,y_0})_{\min})$ and \eqref{c} follows. 
	\medskip

\textbf{Step 3:} Consider now $u \in \sD_{\rm comp}(P_{\max / \min})$. Due to compact 
	support there exist finitely many points $\{(x_1,y_1), \ldots, (x_N,y_N)\} 
	\subset \R_+ \times \R^b$ and smooth cutoff functions 
	$\{\psi_1, \ldots, \psi_N\} \subset C_0^\infty([0,\infty)\times \R^b)$ such that 
	\begin{equation*}
	u = \sum_{j=1}^N \psi_j u, \quad \supp (\psi_j u) \subset 
	\left(\left(x_j - \frac{\epsilon}{2}, x_j + \frac{\epsilon}{2}\right)\cap [0, \epsilon) \right)
	\times B_{\frac{\epsilon}{2}}(y_j).
	\end{equation*}
	The maximal and minimal domains are stable under multiplication with cutoff functions 
	and hence each $\psi_j u \in \sD_{\rm comp}(P_{\max / \min})$.
	Consider for each $j=1, \ldots, N$ a cutoff function 
	$\phi_j \in C_0^\infty([0,\infty)\times \R^b)$ such that 
	$\supp\phi_j \subset ((x_j - \epsilon, x_j + \epsilon)\cap [0,\epsilon)) \times B_\epsilon(y_j)$
	and $\phi_j \restriction \supp (\psi_j u) \equiv 1$. Then as distributions
	\begin{equation*}
	P (\psi_j u) = (D_{y_j} + D_{1,y_j}) \psi_j u = (D_{y_j} + \phi_j D_{1,y_j}) \psi_j u.
	\end{equation*}
	We conclude $\psi_j u \in \sD_{\rm comp}((D_{y_j} + \phi_j D_{1,y_j})_{\max / \min})$.
	In view of \eqref{a} and Corollary \ref{Corollaryequalitymaxmin} we find
     \begin{equation}\label{d}
	 \sD_{\rm comp}(P_{\min}) \subseteq
	  \sD_{\rm comp}(P_{\max}) \subseteq 
	  W^{1,1}_{\rm comp}(\R_+ \times \R^b, H).
	\end{equation}

	\textbf{Step 4:} The statement now follows from a sequence of inclusions
\begin{equation}
	\begin{aligned}
	W^{1,1}_{\rm comp} &\ = \sD_{\rm comp}(D_{y_0, \, \min}) \\ &\stackrel{\eqref{a}}{=}
	\sD_{\rm comp}((D_{y_0} + \phi D_{1,y_0})_{\min}) 
	\stackrel{\eqref{c}}{\subseteq} \sD_{\rm comp}(P_{\min})\\
	&\ \subseteq \sD_{\rm comp}(P_{\max}) \stackrel{\eqref{d}}{=} W^{1,1}_{\rm comp}.
	\end{aligned}
	\end{equation}
	The first equality is due to Corollary \ref{Corollaryequalitymaxmin}.
	Hence all inclusions are in fact equalities and the statement follows.
\end{proof}

\section{Minimal domain of a Laplace Operator on an abstract edge}

Definition \ref{minmax} extends to define the notion of minimal 
and maximal domain for the squares $D_{y_0}^2$ and $D^2$ of the generalized
Dirac operators. Their localized versions are defined as in \eqref{localized-domains}.
In this section, we discuss the minimal and maximal domains of $D_{y_0}^2$ and $D^2$ 
by repeating the arguments of \S \ref{min-domain-Dirac-section} with appropriate changes. 
\medskip

We also note as in Remark \ref{different-sobolev-scales},
the interpolation scales $H^s(S(y))$ and $H^s(S(y_0))$ coincide for $0\leq s \leq 1$, but 
a priori may differ for $s>1$. While this was sufficient for the discussion of the domain of $D$ in the previous 
section, it is insufficient for the discussion of the domain of $D^2$.
Hence, within the scope of this section we pose the following

\begin{assumption}\label{assume-equal-interpolation-scales}
The interpolation scales $H^s(S(y))$ are independent of $y\in \R^b$ for $0 \leq s \leq 2$,
in which case we write $H^s\equiv H^s(S(y))$.
\end{assumption}

The following result follows by repeating the arguments of Lemma \ref{LemmaDcompmaxmin}
and Corollary \ref{Corollaryequalitymaxmin} ad verbatim, where
$D_{y_0}$ is replaced by $D_{y_0}^2$, $W^{1,1}$ by $W^{2,2}$ and $Q$ by $Q^2$.
These changes do not affect the overall argument.

\begin{proposition}\label{Corollaryequalitymaxmin-2}
	$\sD_{\rm comp}(D^2_{y_0, \, \min}) = \sD_{\rm comp}(D^2_{y_0, \, \max}) = W^{2,2}_{\rm 
	comp}(\R_+\times \R^b,H)$ .
\end{proposition}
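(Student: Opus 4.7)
My plan is to mirror the proof of Lemma \ref{LemmaDcompmaxmin} and Corollary \ref{Corollaryequalitymaxmin} essentially verbatim, with the systematic substitutions $D_{y_0} \rightsquigarrow D^2_{y_0}$, $W^{1,1} \rightsquigarrow W^{2,2}$, and $Q \rightsquigarrow Q^2$, using Theorem \ref{TheoremParametrix-2} in place of Theorem \ref{TheoremParametrix}. The proof splits into the two inclusions $\sD_{\rm comp}(D^2_{y_0, \max}) \subseteq W^{2,2}_{\rm comp} \subseteq \sD_{\rm comp}(D^2_{y_0, \min})$, the remaining inclusion $\sD_{\rm comp}(D^2_{y_0, \min}) \subseteq \sD_{\rm comp}(D^2_{y_0, \max})$ being automatic.

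For the first inclusion, I would fix $u \in \sD_{\rm comp}(D^2_{y_0, \max})$ and a test function $\phi \in C_0^\infty(\R_+\times \R^b, H^\infty)$, choose a smooth cutoff $\psi \in C_0^\infty$ with $\psi \equiv 1$ on $\supp u \cup \supp \phi$, and compute
\begin{equation*}
\langle u, \phi \rangle_{L^2} = \langle u, \psi\, (D^2_{y_0})^t (Q^2)^t \phi \rangle_{L^2}
= \langle u, [\psi, (D^2_{y_0})^t] (Q^2)^t \phi \rangle_{L^2} + \langle u, (D^2_{y_0})^t\, \psi (Q^2)^t \phi \rangle_{L^2}.
\end{equation*}
The commutator $[\psi, (D^2_{y_0})^t]$ is now a first-order differential operator (rather than a multiplication operator, as in the $D_{y_0}$-case), but its coefficients are supported on $\supp d\psi$, which is disjoint from $\supp u$, so the first term still vanishes. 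Integration by parts in the second term, legitimate because $\psi(Q^2)^t\phi \in W^{2,2}_{\rm comp}$ by Theorem \ref{TheoremParametrix-2} (here Assumption \ref{assume-equal-interpolation-scales} ensures $W^{2,2}$ is a well-defined, $y_0$-independent test space) and because $u \in \sD(D^2_{y_0,\max})$, yields $\langle Q^2 D^2_{y_0} u, \phi \rangle_{L^2}$. Hence $u = Q^2(D^2_{y_0} u)$ as distributions, and Theorem \ref{TheoremParametrix-2} places $Q^2(D^2_{y_0} u) \in W^{2,2}_{\rm comp}$.

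For the reverse inclusion, I would invoke the continuity $D^2_{y_0}: W^{2,2}_{\rm comp} \to L^2$ (which is immediate from the definition of $W^{2,2}$ as the interpolation space at $s=2$) together with density of $C_0^\infty(\R_+\times\R^b, H^\infty)$ in $W^{2,2}_{\rm comp}$. Then for any $u \in W^{2,2}_{\rm comp}$ there exists a sequence $(u_n) \subset C_0^\infty$ with $u_n \to u$ in $W^{2,2}$, so $D^2_{y_0} u_n \to D^2_{y_0} u$ in $L^2$, placing $u$ in $\sD_{\rm comp}(D^2_{y_0, \min})$.

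The main point requiring slightly more care than in the $D_{y_0}$-argument is the integration by parts: moving a second-order operator across the cutoff $\psi$ produces a genuine first-order commutator, so one must verify both that $(Q^2)^t$ is indeed a parametrix for the formal adjoint $(D^2_{y_0})^t$ with the same mapping properties — which follows because the commutator relations \eqref{Eq1} make $D^2_{y_0}$ symmetric at the formal level — and that the support argument kills the commutator contribution despite its higher differential order. Assumption \ref{assume-equal-interpolation-scales} enters essentially here: without $y_0$-independence of $H^s(S(y))$ up to $s=2$, the space $W^{2,2}_{\rm comp}$ would not be an intrinsic object, and the identification $u = Q^2(D^2_{y_0} u) \in W^{2,2}_{\rm comp}$ would not make sense globally.
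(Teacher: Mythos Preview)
Your proposal is correct and follows exactly the approach the paper takes: the paper's proof consists of the single sentence that the result ``follows by repeating the arguments of Lemma \ref{LemmaDcompmaxmin} and Corollary \ref{Corollaryequalitymaxmin} ad verbatim, where $D_{y_0}$ is replaced by $D_{y_0}^2$, $W^{1,1}$ by $W^{2,2}$ and $Q$ by $Q^2$,'' which is precisely the substitution scheme you carry out. Your additional remark that the commutator $[\psi,(D^2_{y_0})^t]$ is now first order but still supported away from $\supp u$ is a correct and helpful elaboration of the one place where the second-order case is not literally identical to the first-order one.
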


Now we want to extend this statement to a perturbation of $D^2_{y_0}$
\begin{equation}
G = -\b_x^2 + X^{-2}\ S(y) \ (S(y)+1) + T^2 + W 
:=D^2_{y_0} + R_{y_0}
\end{equation}
where $W:W^{2,2}(\R_+ \times \R^b, H) \to 
W^{0,1}(\R_+ \times \R^b, H)$ is a bounded linear operator, preserving
compact supports, and is referred to as a higher order term.

\begin{theorem}\label{TheoremDcompPmimPmax-2}
Assume in addition to the spectral Witt condition \eqref{Witt-condition} that 
\begin{equation}\label{smooth-family-2}
\partial_y S(y) \circ (|S(y_0)|+1)^{-1}, \quad (|S(y_0)|+1) \circ \partial_y S(y) \circ (|S(y_0)|+1)^{-2}
\end{equation}
are bounded operators on $H$ for any $y, y_0\in \R^b$.
Then 
\begin{equation}
\sD_{\rm comp}(G_{\min}) = \sD_{\rm comp}(G_{\max}) = 
W^{2,2}_{\rm comp}(\R_+ \times \R^b, H).
\end{equation}
\end{theorem}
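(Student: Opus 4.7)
The plan is to transcribe the proof of Theorem \ref{TheoremDcompPmimPmax} step by step, substituting $D_{y_0}$ by $D^2_{y_0}$, the perturbation $D_{1,y_0}$ by $R_{y_0}$, the Sobolev scale $W^{1,1}$ by $W^{2,2}$, and the first-order parametrix $Q$ by the second-order parametrix $Q^2$ from Theorem \ref{TheoremParametrix-2}, with Proposition \ref{Corollaryequalitymaxmin-2} playing the role that Corollary \ref{Corollaryequalitymaxmin} played there. With these substitutions, Steps 2--4 of the earlier proof will transfer essentially verbatim: once the graph norms of $D^2_{y_0}$ and $D^2_{y_0}+\phi R_{y_0}$ are shown equivalent for a cutoff $\phi$ supported in a small box around some $(x_0,y_0)$, the passage to the full perturbation $G$ will follow from continuity of $R_{y_0}:W^{2,2}\to L^2$, a finite partition of unity will localize to finitely many such boxes, and the chain of inclusions
\begin{equation*}
W^{2,2}_{\rm comp} = \sD_{\rm comp}(D^2_{y_0,\min}) \subseteq \sD_{\rm comp}(G_{\min}) \subseteq \sD_{\rm comp}(G_{\max}) \subseteq W^{2,2}_{\rm comp}
\end{equation*}
will force every containment to be an equality.

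The only genuinely new ingredient lies in Step~1, where I need the estimate $\|\phi R_{y_0} Q^2 \psi\|_{L^2\to L^2} \le 2\epsilon C$ for $\phi$ supported in $[0,\epsilon)\times B_\epsilon(y_0)$. I would decompose
\begin{equation*}
R_{y_0} = X^{-2}\bigl(S(y)(S(y)+1) - S(y_0)(S(y_0)+1)\bigr) + (T^2 - T^2_{y_0}) + W.
\end{equation*}
The $T^2$-difference and the higher-order term $W$ are handled in the same fashion as the corresponding pieces in Theorem \ref{TheoremDcompPmimPmax}: a Taylor expansion in $y-y_0$, combined with the mapping property $Q^2:L^2_{\rm comp}\to W^{2,2}$ from Theorem \ref{TheoremParametrix-2} and boundedness of $W:W^{2,2}\to W^{0,1}$, produces the required factor $\|y-y_0\|$ plus a factor of $x$ from the weight.

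The hard part is the first, $S$-quadratic, summand. I would factorize
\begin{equation*}
S(y)(S(y)+1) - S(y_0)(S(y_0)+1) = (y-y_0)\int_0^1 \bigl[\partial_y S(y_t)\cdot(S(y_t)+S(y_0)+1) + (S(y_t)-S(y_0))\,\partial_y S(y_t)\bigr]\,dt,
\end{equation*}
with $y_t := y_0 + t(y-y_0)$. Composing with $X^{-2}Q^2$, Theorem \ref{TheoremParametrix-2} gives $X^{-2}Q^2:L^2\to W^{2,0}$, and Assumption \ref{assume-equal-interpolation-scales} ensures that $(|S(y_0)|+1)^2$ maps $W^{2,0}$ boundedly into $L^2$ uniformly in $y_0$. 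Splitting each occurrence of $S(y_t)$ as $(S(y_t)-S(y_0)) + S(y_0)$ and applying the first identity in \eqref{smooth-family-2} to the factors of the form $\partial_y S(y_t)(|S(y_0)|+1)^{-1}$, together with the second identity to commute one power of $|S(y_0)|+1$ past $\partial_y S$, will yield $\|\phi X^{-2}(S(y)(S(y)+1)-S(y_0)(S(y_0)+1))Q^2\psi\|_{L^2\to L^2}\le C\,\sup_{\supp\phi}\|y-y_0\|$, as required.

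The main obstacle is precisely this quadratic appearance of $S$ in the principal part of $D^2$: the single bound in \eqref{smooth-family} that sufficed for the first-order operator $D$ would here lose a full power of $|S(y_0)|+1$ that $Q^2$ cannot absorb, and it is the additional commutator-type condition $(|S(y_0)|+1)\partial_y S(|S(y_0)|+1)^{-2}\in\sL(H)$ from \eqref{smooth-family-2}, together with Assumption \ref{assume-equal-interpolation-scales}, that exactly compensates for this loss. After this estimate the rest of the argument is a mechanical transcription: in Step~2 the equivalence of graph norms gives $\sD_{\rm comp}(D^2_{y_0,\min/\max}) = \sD_{\rm comp}((D^2_{y_0}+\phi R_{y_0})_{\min/\max})$, in Step~3 continuity of $R_{y_0}$ on $W^{2,2}$ upgrades this to an inclusion $\sD_{\rm comp}((D^2_{y_0}+\phi R_{y_0})_{\min})\subseteq \sD_{\rm comp}(G_{\min})$, and a partition-of-unity argument in Step~4 closes the loop.
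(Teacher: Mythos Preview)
Your approach is essentially the same as the paper's: reduce to Step~1 (small-$\epsilon$ bound on $\phi R_{y_0}Q^2\psi$), then transcribe Steps~2--4 verbatim. The paper streamlines Step~1 by first rewriting $S(S+1)=A^2-\tfrac14$ with $A=|S|+\tfrac12$, so the quadratic difference collapses to $A^2(y)-A^2(y_0)$ and the hypothesis \eqref{smooth-family-2} is recast as boundedness of $\partial_y A^2(y)\,(|S(y_0)|+1)^{-2}$; you instead work with $S(S+1)$ directly, which is perfectly legitimate and arguably more transparent.

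One algebraic slip: your displayed factorization of $S(y)(S(y)+1)-S(y_0)(S(y_0)+1)$ is off by a commutator. Differentiating $t\mapsto S(y_t)(S(y_t)+1)$ gives the integrand $(\partial_y S)(S(y_t)+1)+S(y_t)(\partial_y S)$; your expression $\partial_y S\,(S(y_t)+S(y_0)+1)+(S(y_t)-S(y_0))\,\partial_y S$ differs from this by $[S(y_0),\partial_y S(y_t)]$. This missing term is harmless---it is bounded after composition with $(|S(y_0)|+1)^{-2}$ by exactly the two conditions in \eqref{smooth-family-2}---but the identity as written is incorrect and should be fixed.
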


\begin{proof} The assumption \eqref{smooth-family-2} translates into the condition that
for $A=|S|+\frac{1}{2}$ 
\begin{equation}\label{smooth-family-3}
\partial_y A^2(y) \circ (|S(y_0)|+1)^{-2}
\end{equation}
is bounded. From there we proceed exactly as in Theorem \ref{TheoremDcompPmimPmax}.
\medskip

Consider $u\in C_0^\infty((0,\infty)\times \R^b, H^\infty)$ and 
smooth cutoff functions $\phi, \psi \in C_0^\infty([0,\infty)\times \R^b)$ 
taking values in $[0,1]$, such that $\supp\phi \subset [0,\epsilon) 
\times B_\epsilon(y_0)$ and $\psi \restriction \supp u \equiv 1$.
We compute using the analogue of \eqref{QD} for $D^2_{y_0}$
	\begin{equation}
	\begin{aligned}
		\|\phi R_{y_0} u\|_{L^2} &= \|\phi R_{y_0} Q^2 D^2_{y_0} u\|_{L^2}
		= \|\phi R_{y_0} Q^2 \psi D^2_{y_0} u\|_{L^2}\\
		&\leq \|\phi R_{y_0} Q^2 \psi\|_{L^2\to L^2} \cdot \|D^2_{y_0} 
		u\|_{L^2}.
	\end{aligned}
	\end{equation}
	In order to estimate the norm of $\phi R_{y_0} Q^2 \psi$, note that
	\begin{equation}
	\begin{aligned}
	R_{y_0} Q ^2 & = X^{-2} \left(A^2(y)-A^2(y_0)\right) Q^2
	 + \left(T^2-T^2_{y_0}\right) Q^2 +  W Q^2
	\\ & = X^{-2} (y-y_0) \int_0^1 \frac{\partial A^2}{\partial t}\left(y_0 + 
	t(y-y_0)\right)dt\ Q^2
	\\ & + (y-y_0) \int_0^1 \frac{\partial T^2}{\partial t}\left(y_0 + 
	t(y-y_0)\right)dt\ Q^2 + WQ^2.
	\end{aligned}
	\end{equation}
In view of \eqref{smooth-family-3} and boundedness of 
	the higher order term $W:W^{2,2} \to W^{0,1}$ we conclude from 
	Theorem \ref{TheoremParametrix-2} that
	\begin{equation*}
	X^{-2} \frac{\partial A^2}{\partial t}\left(y_0 + t(y-y_0)\right) Q^2 \psi, 
	\quad \frac{\partial T^2}{\partial t}\left(y_0 + t(y-y_0)\right) \, Q^2 \psi, 
	\quad X^{-1} W \, Q^2 \psi
	\end{equation*}
	are bounded operators on $L^2(\R_+ \times \R^b, H)$ with bound uniform in $t\in [0,1]$
	and $\psi$. Hence we conclude for some unform constant $C>0$
	\begin{equation}
	\begin{aligned}
	\|\phi R_{y_0} Q^2 \psi \|_{L^2\to L^2} \leq C \left( \sup_{q\in \supp \phi} x(q) 
	+  \sup_{q\in \supp \phi} \|y(q) -y_0\| \right) \leq 2 \epsilon C.
	\end{aligned}
	\end{equation}
	Thus we may choose $\epsilon>0$ sufficiently small such that
	\begin{equation}
	\|\phi  R_{y_0} u\|_{L^2} \leq q \cdot \|D^2_{y_0} u\|_{L^2}, \;\; 
	\text{for}\;\; q<1.
	\end{equation}
Then the following inequalities hold for $u\in C^\infty_0 (\R_+\times 
	\R^b, H^\infty)$,
	\begin{equation}
	\begin{aligned}
	\|(D^2_{y_0} + \phi R_{y_0})u\|_{L^2} &\leq \|D^2_{y_0} u\|_{L^2} + q \|D^2_{y_0} 
	u\|_{L^2}\\ &\leq (1+q)\cdot \|D^2_{y_0} u\|_{L^2}.
	\end{aligned}
	\end{equation}
	On the other hand
	\begin{equation}
	\begin{aligned}
	\|D^2_{y_0} u\|_{L^2} &\leq \|(D^2_{y_0}+\phi R_{y_0})u\|_{L^2} + \|\phi R_{y_0} 
	u\|_{L^2}\\
	&\leq \|(D^2_{y_0}+\phi R_{y_0})u\|_{L^2} + q\|D^2_{y_0} u\|_{L^2}. \\
	\Rightarrow \ \|D^2_{y_0} u\|_{L^2} &\leq (1-q) ^{-1}\|(D^2_{y_0}+\phi R_{y_0})u\|_{L^2}.
	\end{aligned}
	\end{equation}
	Thus the graph-norms of $D^2_{y_0}$ and $(D^2_{y_0}+\phi R_{y_0})$ are equivalent and 
	hence their minimal domains coincide. Same statement holds for the maximal as well as the localized domains. Thus we have the following equalities. 
	\begin{equation}\label{a-2}
	\begin{split}
	 &\sD_{\rm comp}(D^2_{y_0, \, \min})  = \sD_{\rm comp}((D^2_{y_0} + \phi R_{y_0})_{\min}), \\
	 &\sD_{\rm comp}(D^2_{y_0, \, \max})  = \sD_{\rm comp}((D^2_{y_0} + \phi R_{y_0})_{\max}).
	 \end{split}
	\end{equation}
The equalities continue to hold for a cutoff function 
	$\phi \in C_0^\infty((0,\infty)\times \R^b)$ such that for some $x_0 > \epsilon$,
	$\supp\phi \subset (x_0 - \epsilon, x_0 + \epsilon) \times B_\epsilon(y_0)$ by
	a similar argument. \medskip
	
	From there we may repeat the arguments of the proof of Theorem 
	\ref{TheoremDcompPmimPmax} ad verbatim, replacing $D_{y_0}$ by $D^2_{y_0}$,
	$D_{1,y_0}$ by $R_{y_0}$, $P$ by $G$, $W^{1,1}$ by $W^{2,2}$. These replacements
	do not affect the overall argument. 
\end{proof}

\section{Domains of Dirac and Laplace Operators on a Stratified Space}

Consider a compact stratified space $M_k$ of depth $k\in \N$ with an iterated 
cone-edge metric $g_k$. Each singular stratum $B$ of $M_k$ admits an open 
neighbourhood $\cU \subset M_k$ with local coordinates $y$ and a defining 
function $x_k$ such that
\begin{equation}\label{iterated-edge-metric-last}
g|_{\cU} = dx_k^2 + x_k^2 \ g_{k-1}(x_k, y) + g_B(y) + h =: \overline{g} + h,
\end{equation}
where $g_{k-1}(x_k,y)$ is a smooth family of iterated cone-edge metrics on a 
compact stratified space $M_{k-1}$ of lower depth and $h$ is a higher order
symmetric $2$-tensor, smooth on the resolution $\widetilde{\cU}$ with $|h|_{\overline{g}} = O(x_k)$
as $x_k \to 0$. 

The associated Sobolev-spaces are defined in Definition \ref{Sobolev-spaces}.
Recall, their elements take values in the vector bundle $E$, which denotes the exterior algebra of the 
incomplete edge cotangent bundle $\Lambda^* {}^{ie}T^*\cU$ in case of the 
Gauss--Bonnet operator, 
and the spinor bundle in case of the spin Dirac operator. We usually omit $E$ from the notation. 
We introduce here the localized versions of the Sobolev spaces ($s\in \N$)
\begin{equation}
\begin{aligned}
\sH^{s,\delta}_{e,{\rm comp}} :=\{\phi \cdot u \ | \ \phi \in C^\infty_0(\widetilde{\cU}), 
u\in \sH^{s,\delta}_{e}   \}.
\end{aligned}
\end{equation}

Consider the unitary transformation $\Phi$ in \eqref{unitary-trafo}, \cf 
\cite[(5.10)]{BS2},
which maps $L^2(\cU,E,\overline{g})$ to $L^2(\cU, E, \overline{g}_{\textup{prod}})$, where 
we recall $\overline{g}$ from \eqref{iterated-edge-metric-last} and set $\overline{g}_{\textup{prod}}
:= dx_k^2 + g_{k-1}(x_k, y) + g_B(y)$. The spaces $\sH^{*,*}_{e,{\rm comp}}$ 
with compact support in $\cU$ may be defined with respect
to $\overline{g}$ and $\overline{g}_{\textup{prod}}$. We indicate the choice 
of the metric when necessary,
\eg $\sH^{*,*}_{e,{\rm comp}}(M_k,\overline{g}_{\textup{prod}}), L^2_{{\rm 
comp}}
(M_k,\overline{g}_{\textup{prod}})$, and do not specify the metric 
when the statement holds for both choices. Note 
\begin{align*}
\sH^{*,*}_{e,{\rm comp}}(M_k,\overline{g}_{\textup{prod}}) = 
\Phi \sH^{*,*}_{e,{\rm comp}}(M_k,\overline{g}).
\end{align*}
Whenever we use the Sobolev spaces 
$\sH^{*,*}_{e}(M_k)$ or $L^2(M_k)$ without compact support in the open interior of $M_k$, 
we use the iterated cone-edge metric $g_k$ in the definition of the 
$L^2$-structure.

\begin{remark}We write $L^2_{\rm comp}:= 
	\sH^{0,0}_{e,{\rm comp}}$ and denote by $\rho_k$ a smooth
	function on the resolution $\widetilde{M}_k$, nowhere vanishing
	in its open interior, and vanishing to first order at each 
	boundary face of $\widetilde{M}_k$. Iteratively, $\rho_k = x_k \rho_{k-1}$. Then 
\begin{equation}\label{Sobolev-spaces-versions}
\begin{split}
\sH^{1,1}_{e,{\rm comp}} &= \rho_k \ \{u \in L^2_{\rm comp} \ | \ 
\rho_k \partial_x u,  \rho_k \partial_y u, \mathcal{V}_{e, k-1}(M_{k-1}) u \in L^2_{\rm comp} \ \} \\
&= \{u \in L^2_{\rm comp} \ | \ 
\frac{u}{\rho_k}, \partial_x u, \partial_y u, \rho_k^{-1}\mathcal{V}_{e, k-1}(M_{k-1}) u \in L^2_{\rm comp} \ \}.
\end{split}
\end{equation}
Here, the first equality in \eqref{Sobolev-spaces-versions}
follows by Definition \ref{Sobolev-spaces}, 
once we recall from \eqref{edge-vector-field-depth-k} the following
iterative structure of edge vector fields
\begin{equation}
\mathcal{V}_{e,k}\restriction \widetilde{\cU} = C^\infty(\widetilde{\cU})\textup{- span}\, 
\{\rho_k \partial_x, \rho_k\partial_y, 
\mathcal{V}_{e, k-1}(M_{k-1})\}.
\end{equation}
The second equality in \eqref{Sobolev-spaces-versions} is now straightforward.
Similarly
\begin{equation}\label{Sobolev-spaces-versions-2}
\begin{split}
\sH^{2,2}_{e,{\rm comp}} &= \rho^2_k \ \{u \in L^2_{\rm comp} \ | \ 
\{\rho_k \partial_x, \rho_k \partial_y, \mathcal{V}_{e, k-1}(M_{k-1})\}^j \, u \in L^2_{\rm comp}, \ j=1,2 \} \\
&= \{u \in L^2_{\rm comp} \ | \ 
\{\rho_k^{-1}, \partial_x, \partial_y, \rho_k^{-1}\mathcal{V}_{e, k-1}(M_{k-1})\}^j \, u \in L^2_{\rm comp}, \ 
j=1,2 \}.
\end{split}
\end{equation}
\end{remark}
	
The spin Dirac and the Gauss--Bonnet operators $D_k$ on $(M_k,g_k)$ admit under 
a rescaling $\Phi$ as in 
\eqref{unitary-trafo} the following form over the singular neighbourhood $\cU \subset M_k$
\begin{equation}\label{EquationA}
\Phi \circ D_k \circ \Phi^{-1}= \Gamma(\partial_{x_k} + X_k^{-1}S_{k-1}(y))+T + V,
\end{equation}
which satisfies the following iterative properties
\begin{enumerate}
	\item $S_{k-1}(y) = D_{k-1}(y)+R_{k-1}(y)$, where $D_{k-1}(y)$ is 
	a smooth family of differential operators (spin Dirac or the Gauss--Bonnet 
	operators) on $(M_{k-1}, g_{k-1}(0,y))$. The operators $S_{k-1}(y),D_{k-1}(y)$ extend
	continuously to bounded maps $\sH^{1,1}_e(M_{k-1}) \to L^2(M_{k-1})$. 
	Moreover, $R_{k-1}(y)$ extends continuously to a bounded operator on $L^2(M_{k-1})$;
	\item $x_k^{-1}V$ extends continuously to a map from $\sH^{1,1}_{e, {\rm comp}}$ to
	$L^2_{\rm comp}$;
	\item $T$ is a Dirac Operator on $B$.
\end{enumerate}

Since at this stage essential self-adjointness  
of each $S_{k-1}(y)$ and discreteness of its self-adjoint extension 
is yet to be established, we reformulate the spectral Witt condition 
\eqref{Witt-condition} in terms of quadratic forms. 
Here, we employ the notions introduced in 
Kato \cite[Chapter 6, \S 1]{Kato}. We define for any 
smooth compactly supported $u \in C^\infty_0(M_{k-1})$ using the inner product of
$L^2(M_{k-1}, g_{k-1}(0,y))$
\begin{equation}
t(S_{k-1}(y))[u] := \|S_{k-1}(y) u\|^2_{L^2}.
\end{equation}
This is the quadratic form associated to the symmetric differential operator $S_{k-1}(y)^2$,
densely defined with domain $C^\infty_0(M_{k-1})$ in the Hilbert space 
$L^2(M_{k-1}, g_{k-1}(0,y))$. The numerical range of $t(S_{k-1}(y))$ is defined by 
\begin{equation}
\Theta(S_{k-1}(y)) := \left\{ t(S_{k-1}(y))[u] \in \R \mid u \in C^\infty_0(M_{k-1}), 
\|u\|^2_{L^2} = 1\right\}.
\end{equation}

We can now reformulate the spectral Witt condition, cf. \eqref{Witt-condition}, 
as follows. 
 
\begin{definition}\label{Witt-stratified}
The operator $D_k$ on the stratified space $M_k$
satisfies the spectral Witt condition, if there exists $\delta > 0$ such that in all depths $j \leq k$ 
the numerical ranges $\Theta(S_{k-1}(y))$ are subsets
of $[1+\delta, \infty)$ for any $y\in B$.
\end{definition}

\begin{proposition}
Assume that $S_{k-1}(y)$ with domain $C^\infty_0(M_{k-1})$ in the Hilbert space 
$L^2(M_{k-1}, g_{k-1}(0,y))$ is essentially self-adjoint and its self adjoint 
realization is discrete. Then $\Theta(S_{k-1}(y)) \subset [1 + \delta, \infty)$ for some $\delta > 0$ if and 
only if $\textup{Spec} S_{k-1}(y) \cap [-1, 1] = \varnothing$.
\end{proposition}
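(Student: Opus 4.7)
The plan is to reduce the statement to a spectral-theoretic calculation, using essential self-adjointness to identify the quadratic form $t(S_{k-1}(y))$ with the form of $\overline{S}_{k-1}(y)^2$, and then applying the spectral mapping theorem together with discreteness.

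First, denote $S=S_{k-1}(y)$ and let $\overline S$ be its closure, which by assumption is self-adjoint with discrete spectrum. Since $C^\infty_0(M_{k-1})$ is a core for $\overline S$, and $t(S)[u]=\langle Su,Su\rangle=\langle S^{*}Su,u\rangle$ for $u\in C^\infty_0(M_{k-1})$, the closure of $t(S)$ is precisely the quadratic form of the nonnegative self-adjoint operator $\overline S{}^{*}\overline S=\overline S{}^{2}$. In particular, $C^\infty_0(M_{k-1})$ is also a form core, hence by a standard density argument
\begin{equation*}
\inf\,\Theta(S)\;=\;\inf_{\substack{u\in \sD(\overline S),\\ \|u\|=1}}\|\overline S u\|^2\;=\;\inf\,\mathrm{Spec}(\overline S{}^{2}).
\end{equation*}
By the spectral mapping theorem $\mathrm{Spec}(\overline S{}^{2})=\{\lambda^2\mid \lambda\in\mathrm{Spec}(\overline S)\}$, so $\inf\,\Theta(S)=\bigl(\inf_{\lambda\in\mathrm{Spec}(\overline S)}|\lambda|\bigr)^2$.

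For the forward direction, suppose $\Theta(S)\subset [1+\delta,\infty)$ for some $\delta>0$. Then $|\lambda|\geq\sqrt{1+\delta}>1$ for every $\lambda\in\mathrm{Spec}(\overline S)$, which gives $\mathrm{Spec}(\overline S)\cap[-1,1]=\varnothing$. For the converse, assume $\mathrm{Spec}(\overline S)\cap[-1,1]=\varnothing$. Here the discreteness of the spectrum is essential: it ensures that there is an $\varepsilon>0$ with $\mathrm{Spec}(\overline S)\cap[-1-\varepsilon,1+\varepsilon]=\varnothing$, for otherwise one could find a sequence of eigenvalues accumulating at $\pm 1$. Hence $|\lambda|\geq 1+\varepsilon$ for all $\lambda\in\mathrm{Spec}(\overline S)$, and therefore $\inf\Theta(S)\geq (1+\varepsilon)^2\geq 1+\delta$ with $\delta:=2\varepsilon+\varepsilon^2>0$.

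The main obstacle is conceptual rather than technical: one must be careful to pass between the numerical range of the symmetric operator $S$ defined on $C^\infty_0(M_{k-1})$ and the spectrum of its self-adjoint closure. This is precisely where both essential self-adjointness (so that $t(S)$ has a unique closed extension, which is the form of $\overline S{}^{2}$) and discreteness of the spectrum (so that a puncture in the spectrum automatically yields a uniform gap) are used. Beyond these two invocations the argument is a routine application of the spectral theorem, and no additional estimates on $S_{k-1}(y)$ or on the metric $g_{k-1}(0,y)$ are needed.
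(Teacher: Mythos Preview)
Your proof is correct and follows essentially the same route as the paper's: both reduce the statement to the identity $\inf\Theta(S_{k-1}(y))=\inf\mathrm{Spec}\bigl(\overline{S}_{k-1}(y)^{2}\bigr)$, then invoke the spectral mapping theorem and discreteness to pass between the gap for $S$ and the gap for $S^2$. The only difference is cosmetic: the paper quotes Kato's theorem that the numerical range is dense in the spectrum of $S_{k-1}(y)^2$, whereas you obtain the needed equality of infima directly from the fact that $C^\infty_0(M_{k-1})$, being a core for $\overline{S}_{k-1}(y)$, is a form core for $\overline{S}_{k-1}(y)^2$.
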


\begin{proof}
By Kato \cite[Chapter 6, \S 4, Theorem 1.18]{Kato}, $\Theta(S_{k-1}(y))$ is a dense subset of 
$\textup{Spec} \, S_{k-1}(y)^2$. If the spectral Witt condition in the sense of Definition 
\ref{Witt-stratified} holds, this implies that $\textup{Spec} \, S_{k-1}(y)^2 \subset [1 + \delta, \infty)$
for some $\delta>0$. By discreteness this is equivalent to $\textup{Spec} S_{k-1}(y) \cap [-1, 1] = \varnothing$.

Conversely, if $\textup{Spec} \, S_{k-1}(y) \cap [-1, 1] = \varnothing$, then by discreteness of the 
spectrum, $S_{k-1}(y)^2 > 4+\delta$ for some $\delta > 0$. The spectral 
Witt condition in the sense of Definition \ref{Witt-stratified} now follows, since 
by Kato \cite[Chapter 6, \S 4, Theorem 1.18]{Kato}, 
$\Theta(S_{k-1}(y))$ is a dense subset of $\textup{Spec} \, S_{k-1}(y)^2$. 
\end{proof}

We can now prove our main result. 

\begin{theorem}
	Let $M_k$ be a compact stratified Witt space. Let $D_k$ denote 
	either the Gauss--Bonnet or the spin Dirac operator. Assume that $D_k$ 
	satisfies the 
	spectral Witt condition\footnote{In case of the Gauss--Bonnet 
	operator on a stratified Witt space this 
	can always be achieved by scaling the iterated cone-edge metric on fibers 
	accordingly.}. Then 
	$\sD_{\max}(D_k) = \sD_{\min}(D_k) = \sH^{1,1}_e(M_k)$.
\end{theorem}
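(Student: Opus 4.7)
The plan is to proceed by induction on the depth $k$ of the stratified space $M_k$, using the local parametrix machinery of Sections through Theorem \ref{TheoremDcompPmimPmax} as the inductive engine. The base case $k=0$ is a compact smooth manifold, where $D_0$ is a classical elliptic Dirac-type operator and the statement $\sD_{\max}(D_0)=\sD_{\min}(D_0)=H^1(M_0)$ is standard. Assume the theorem holds in depth $k-1$, so that for each parameter $y$ in the top stratum $B$ of $M_k$, the tangential operator $D_{k-1}(y)$ is essentially self-adjoint on $L^2(M_{k-1})$ with domain $\sH^{1,1}_e(M_{k-1})$.

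Next, I would verify the abstract hypotheses that feed into Theorem \ref{TheoremDcompPmimPmax}, with $H := L^2(M_{k-1})$ and $S(y) := S_{k-1}(y) = D_{k-1}(y) + R_{k-1}(y)$. Because $R_{k-1}(y)$ is bounded on $L^2(M_{k-1})$, a Kato--Rellich perturbation argument combined with the induction hypothesis shows that $S_{k-1}(y)$ is essentially self-adjoint with fixed domain $\sD_S = \sH^{1,1}_e(M_{k-1})$, independent of $y$. The spectral Witt condition of Definition \ref{Witt-stratified} then guarantees $\spec S_{k-1}(y)\cap[-1,1]=\varnothing$ uniformly in $y\in B$, matching \eqref{Witt-condition}. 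Smoothness of the family $g_{k-1}(x_k,y)$ in $y$ yields the family smoothness hypothesis \eqref{smooth-family}. The representation \eqref{EquationA} then realizes $D_k$ near each point of $B$ as a perturbation $P = D_{y_0} + D_{1,y_0}$ of a model generalized Dirac operator on an abstract edge, with the higher-order term $V$ satisfying the required boundedness $\sH^{1,1}_{e,\rm comp}\to \sH^{0,1}_{e,\rm comp}$.

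With these hypotheses verified, Theorem \ref{TheoremDcompPmimPmax} applies and yields, for any $u$ supported in a sufficiently small neighborhood of $B$,
\begin{equation*}
\sD_{\rm comp}(P_{\min}) = \sD_{\rm comp}(P_{\max}) = W^{1,1}_{\rm comp}(\R_+\times\R^b,H).
\end{equation*}
I would then identify the abstract space $W^{1,1}$, built from the interpolation scales $H^s_e(\R_+\times\R^b)$ and $H^s(S(y_0))$ via Proposition \ref{p.ISHS.12}, with the concrete edge Sobolev space $\sH^{1,1}_{e,\rm comp}(M_k)$. By Proposition \ref{p.ISHS.12} and the characterization \eqref{Sobolev-spaces-versions}, this comes down to recognizing
\begin{equation*}
W^{1}(\R_+\times\R^b,L^2(M_{k-1})) = \bigl(H^1_e(\R_+\times\R^b)\hot L^2(M_{k-1})\bigr)\cap\bigl(L^2(\R_+\times\R^b)\hot \sH^{1,1}_e(M_{k-1})\bigr),
\end{equation*}
which on the nose matches the iterative description of $\sH^{1,1}_{e,\rm comp}$ via $\{\rho_k\partial_x,\rho_k\partial_y,\mathcal V_{e,k-1}(M_{k-1})\}$ after multiplication by the weight $\rho_k = x_k\rho_{k-1}$. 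The induction hypothesis is essential here, since it is precisely what makes $\sD_S=\sH^{1,1}_e(M_{k-1})$ explicit.

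Finally I would globalize: choose a finite partition of unity $\{\chi_i\}$ on $M_k$, with each $\chi_i$ either supported in a distinguished neighborhood of some stratum (so the previous local identification applies after the unitary transformation $\Phi$ of \eqref{unitary-trafo}) or supported in the smooth open regular part (where elliptic regularity gives $\sD_{\max}=\sD_{\min}=\sH^1$). For $u\in\sD_{\max}(D_k)$, commutators $[\chi_i,D_k]$ are bounded multiplication operators, so each $\chi_i u\in\sD_{\rm comp}((D_k)_{\max})$ lies in $\sH^{1,1}_{e,\rm comp}$ by the local result; summing gives $u\in\sH^{1,1}_e(M_k)$. Conversely any $u\in\sH^{1,1}_e(M_k)$ can be approximated in the graph norm by $C^\infty_0(M_k)$ sections by combining local approximations from $\sD_{\rm comp}((D_k)_{\min})=W^{1,1}_{\rm comp}$ and smoothing in the interior. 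I expect the main technical obstacle to be the bookkeeping in the inductive step: matching the tensor-product scale $W^1$ with the iteratively defined edge Sobolev space $\sH^{1,1}_e(M_k)$, and verifying the smoothness condition \eqref{smooth-family} on the family $S_{k-1}(y)$ given only that the link metric $g_{k-1}(x_k,y)$ depends smoothly on $y$; the key point is that differentiating in $y$ produces at most a first-order operator on $M_{k-1}$, which is controlled by $(|S(y_0)|+1)^{-1}$ precisely because of the inductive identification of domains.
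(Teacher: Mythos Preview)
Your proposal is correct and follows essentially the same approach as the paper: induction on depth, verification of the abstract hypotheses so that Theorem \ref{TheoremDcompPmimPmax} applies near each stratum, and identification of the abstract space $W^{1,1}_{\rm comp}$ with $\sH^{1,1}_{e,\rm comp}(M_k)$ via the tensor-product description and \eqref{Sobolev-spaces-versions}. The paper packages the induction hypothesis slightly differently (as a standalone ``Assumption'' recording self-adjointness, discreteness, the spectral gap, and the boundedness conditions on $S_{k-1}(y)$ and $\partial_y S_{k-1}(y)$), and it carries out the Sobolev-space identification as two explicit inclusions rather than a one-line match, but the substance is the same; your explicit partition-of-unity globalization and Kato--Rellich remark are things the paper leaves implicit.
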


\begin{proof}
We prove the result by induction on the following statement.

\begin{assumption}\label{iterative-assumption}
On any compact stratified space $M_j$ the operator $D_j$ satisfies the
following conditions near each stratum $B$:
For $y\in B$,
		$S_{j-1}(y)$ admits a unique self-adjoint extension in $L^2(M_{j-1})$
		with discrete spectrum and $\spec S_{j-1}\cap [-1, 1] = \varnothing$.
		The unique self-adjoint domain of $S_{j-1}(y)$ is given by $\sH^{1,1}_e(M_{j-1})$.
		The compositions $S_{j-1}(y) (|S_{j-1}(y_0)|+1)^{-1}$ and 
		$\partial_y S_{j-1}(y) (|S_{j-1}(y_0)|+1)^{-1}$ are bounded on $L^2(M_{j-1})$
		for $y,y_0\in B$.
\end{assumption}

These assumptions are trivially satisfied if $j=1$. Assume that 
Assumption \ref{iterative-assumption} is satisfied for $j\leq k$. 
We need to prove that Assumption \ref{iterative-assumption} is then 
satisfied for $j \leq k+1$. Let $\sD_{\rm comp}(D_k)$ denote 
elements in the maximal domain of $D_k$ with compact support in $\widetilde{\cU}$. 
Then by Theorem \ref{TheoremDcompPmimPmax}, we conclude
\begin{equation*}
	\begin{aligned}
	\Phi \sD_{\rm comp}(D_k) &\equiv \sD_{\rm comp}(\Phi \circ D_k \circ \Phi^{-1}) 
	\\ &= W^{1,1}_{\rm comp} (\R_+\times 
	\R^b,H^{\bullet}(S_{k-1}))\\
	&= \sH^{1,1}_{e,{\rm comp}}(\R_+\times \R^b)\hot L^2(M_{k-1}) 
	\cap \sH^{0,1}_{e,{\rm comp}}(\R_+\times 
	\R^b)\hot \sH^{1,1}_e(M_{k-1})\\
	&\subseteq \{u \in L^2_{\rm comp} (M_k,\overline{g}_{\textup{prod}})\ | \ 
	\frac{u}{\rho_k}, \partial_x u, \partial_y u, \rho_k^{-1}\mathcal{V}_{e, k-1}(M_{k-1}) u 
	\\ &\in L^2_{\rm comp} (M_k,\overline{g}_{\textup{prod}}) \} =\sH^{1,1}_{e,{\rm comp}}
	(M_k, \overline{g}_{\textup{prod}}) \equiv \Phi \sH^{1,1}_{e,{\rm comp}}
	(M_k, \overline{g}),
	\end{aligned}
	\end{equation*}
where we used \eqref{Sobolev-spaces-versions}
in the last line. On the other hand it is straightforward to 
check that 
\begin{equation*}
\begin{aligned}
&\Phi \sH^{1,1}_{e,{\rm comp}}
	(M_k, \overline{g}) \equiv \sH^{1,1}_{e,{\rm comp}}
	(M_k, \overline{g}_{\textup{prod}}) \\ &= \rho_k \ \{u \in L^2_{\rm comp}(M_k, \overline{g}_{\textup{prod}}) \ | \ 
\rho_k \partial_x u,  \rho_k \partial_y u, \mathcal{V}_{e, k-1}(M_{k-1}) u \in L^2_{\rm comp} (M_k, \overline{g}_{\textup{prod}}) \} \\
&\subseteq \sH^{1,1}_{e,{\rm comp}}(\R_+\times \R^b)\hot L^2(M_{k-1}) 
\cap \sH^{0,1}_{e,{\rm comp}}(\R_+\times 
\R^b)\hot \sH^{1,1}_e(M_{k-1}) \\
& = W^{1,1}_{\rm comp} (\R_+\times 
\R^b,H^{\bullet}(S_{k-1})) = \sD_{\rm comp}(\Phi \circ D_k \circ \Phi^{-1}) 
\equiv \Phi \sD_{\rm comp}(D_k).
\end{aligned}
\end{equation*}
We conclude $\sD_{\rm comp}(D_k) = \sH^{1,1}_{e,{\rm comp}}(M_k, \overline{g})$
and hence $\sD(D_k) = \sH^{1,1}_{e}(M_k)$. 
Essential self-adjointness of $D_k$ implies essential self-adjointness of $S_k$
with the domain of both given by $\sH^{1,1}_{e}(M_k)$ independently of parameters.
The domain $\sH^{1,1}_{e}(M_k)$ embeds compactly into $L^2(M_k)$ and hence
both $D_k$ and $S_k$ are discrete. 

Since $S_k$ is discrete, the spectral Witt condition of Definition 
\ref{Witt-stratified} implies 
\begin{equation}
\spec S_{k} \cap \left[-1, 1\right] = \varnothing.
\end{equation} 
The mapping properties of $(|S_{k}|+1)^{-1}$ are derived from the mapping properties 
of the model parametrix in Theorem \ref{TheoremParametrix} in the usual way and 
hence $(|S_{k}|+1)^{-1}:L^2(M_k) \to \sH^{1,1}_{e}(M_k)$ is bounded. Since 
$S_k, \partial_y S_k$ are bounded maps from $\sH^{1,1}_{e}(M_k)$
to $L^2(M_k)$ by the iterative properties of the individual operators in \eqref{EquationA},
we conclude that Assumption \ref{iterative-assumption} 
is satisfied for $j \leq k+1$ and hence holds for all $j \in \N$.
\end{proof}
	
Similar arguments apply for the Laplace operators.

\begin{corollary}
	Let $M_k$ be a compact stratified Witt space. Let $D_k$ denote 
	either the Gauss--Bonnet or the spin Dirac operator. Assume that $D_k$ 
	satisfies the 
	spectral Witt condition. Then
	$\sD_{\max}(D^2_k) = \sD_{\min}(D^2_k) = \sH^{2,2}_e(M_k)$.
\end{corollary}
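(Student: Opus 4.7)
The plan is to run the induction on depth $k$ in parallel with the preceding theorem, replacing Theorem \ref{TheoremDcompPmimPmax} throughout by Theorem \ref{TheoremDcompPmimPmax-2}. I would enlarge Assumption \ref{iterative-assumption} so that it also records, at each depth $j$, that the self-adjoint extension of $S_{j-1}(y)^2$ has domain $\sH^{2,2}_e(M_{j-1})$ independently of $y\in B$, and that both compositions appearing in \eqref{smooth-family-2} are bounded on $L^2(M_{j-1})$. The enlarged assumption is trivially true at the base of the induction, where the link is a smooth compact manifold.

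For the inductive step at depth $k$, I would localize near a stratum $B\subset M_k$ and apply the unitary rescaling $\Phi$ of \eqref{unitary-trafo} to bring $D_k^2$ into the abstract edge form $D_{y_0}^2+R_{y_0}$ of Theorem \ref{TheoremDcompPmimPmax-2}. The spectral Witt condition at depth $k$ together with the enlarged inductive assumption at depth $k-1$ supply Assumption \ref{assume-equal-interpolation-scales} and the boundedness hypotheses \eqref{smooth-family-2}; the higher order term $R_{y_0}$ inherits the required mapping $W^{2,2}\to W^{0,1}$ from item (2) of the iterative structure following \eqref{EquationA}. Theorem \ref{TheoremDcompPmimPmax-2} then yields
\begin{equation*}
\Phi\,\sD_{\rm comp}(D^2_{k,\max}) = \Phi\,\sD_{\rm comp}(D^2_{k,\min}) = W^{2,2}_{\rm comp}(\R_+\times \R^b, H^\bullet(S_{k-1})).
\end{equation*}

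Next, I would identify the right-hand side with $\Phi\,\sH^{2,2}_{e,{\rm comp}}(M_k,\overline{g})$ using the same tensor-product and intersection machinery as in the theorem, but one regularity order higher. Concretely, combine Proposition \ref{p.ISHS.12} for the scale generated by $\gL\hot I + I\hot S_{k-1}$ with the description \eqref{Sobolev-spaces-versions-2} of $\sH^{2,2}_{e,{\rm comp}}$ and the inductively available identity $\sD(S_{k-1}(y)^2)=\sH^{2,2}_e(M_{k-1})$. From this one reads off $\sD(D_k^2) = \sH^{2,2}_e(M_k)$; essential self-adjointness and discreteness of $S_k^2$ follow exactly as in the preceding theorem, and a parametrix argument based on Theorem \ref{TheoremParametrix-2} propagates the enlarged assumption to depth $k+1$, closing the induction.

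The main obstacle is verifying Assumption \ref{assume-equal-interpolation-scales}, namely the $y$-independence of the interpolation scale $H^s(S_{k-1}(y))$ on the whole range $0\leq s\leq 2$, rather than only $0\leq s\leq 1$ as was sufficient in the proof for $D$. This is precisely why the induction must prove the statement for $D^2$ jointly with, or immediately after, the statement for $D$ at each fixed depth: the equality $\sD(S_{k-1}(y)^2)=\sH^{2,2}_e(M_{k-1})$ is the corollary's content one depth lower. Likewise, the second composition in \eqref{smooth-family-2} demands boundedness of $\partial_y S_{k-1}(y)$ from $\sH^{2,2}_e(M_{k-1})$ to $\sH^{1,1}_e(M_{k-1})$; extracting this from the iterative form \eqref{EquationA} and the smooth dependence of $g_{k-1}(x_k,y)$ on $y$ is technical but routine, once the inductive characterisation of $\sH^{2,2}_e(M_{k-1})$ is in hand.
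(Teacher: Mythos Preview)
Your proposal is correct and follows essentially the same approach as the paper: induction on depth, using the corollary at depth $k-1$ to supply Assumption \ref{assume-equal-interpolation-scales} via $\sD(S_{k-1}(y)^2)=\sH^{2,2}_e(M_{k-1})$, then invoking Theorem \ref{TheoremDcompPmimPmax-2} and identifying $W^{2,2}_{\rm comp}$ with $\sH^{2,2}_{e,{\rm comp}}$ through \eqref{Sobolev-spaces-versions-2}. The paper structures the induction slightly differently---it treats the theorem for $D$ as already established at all depths and runs a separate induction for $D^2$, rather than enlarging the inductive package---but this is a cosmetic difference; your identification of Assumption \ref{assume-equal-interpolation-scales} as the crux, and why it forces the $D^2$ statement at lower depth into the hypothesis, matches the paper exactly.
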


\begin{proof}
We prove the result by induction. The statement is trivially satisfied if $k=0$. Assume that 
the statement holds for $(k-1)\in \N_0$. In particular, by induction hypothesis and by 
Theorem \ref{TheoremDcompPmimPmax}
\begin{equation}\label{domains-S-S-square}
\begin{split}
&H^1(S_{k-1})\equiv \sD (S_{k-1}) = \sH^{1,1}_e(M_{k-1}), \\ 
&H^2(S_{k-1}) \equiv \sD (S^2_{k-1}) = \sH^{2,2}_e(M_{k-1}).
\end{split}
\end{equation}
Since the domains $\sD (S^2_{k-1}(y))$ are independent of $y$ by the induction 
hypothesis, their interpolation scales $H^s(S_{k-1}(y))$ coincide for $0\leq s \leq 2$
and the Assumption \ref{assume-equal-interpolation-scales} is satisfied. 
The spectral Witt condition is satisfied in each depth by Theorem \ref{TheoremDcompPmimPmax}.
We need to prove the statement for $k$. Let $\sD_{\rm comp}(D^2_k)$ denote 
elements in the maximal domain of $D^2_k$ with compact support in $\widetilde{\cU}$. 
Then by Theorem \ref{TheoremDcompPmimPmax-2} and \eqref{domains-S-S-square} we conclude
\begin{equation*}
	\begin{aligned}
	\Phi \sD_{\rm comp}(D^2_k) &\equiv \sD_{\rm comp}(\Phi \circ D^2_k \circ \Phi^{-1})\\
	&= W^{2,2}_{\rm comp} (\R_+\times 
	\R^b,H^{\bullet}(S_{k-1}))\\
	&= \sH^{2,2}_{e,{\rm comp}}(\R_+\times \R^b)\hot L^2(M_{k-1}) 
	\\ &\cap \sH^{0,2}_{e,{\rm comp}}(\R_+\times 
	\R^b)\hot \sH^{1,1}_e(M_{k-1}) \\ &\cap \sH^{0,2}_{e,{\rm comp}}(\R_+\times 
	\R^b)\hot \sH^{2,2}_e(M_{k-1}) \\ &= \sH^{2,2}_{e,{\rm comp}}(M_k,\overline{g}_{\textup{prod}})
	\equiv \Phi \sH^{2,2}_{e,{\rm comp}}(M_k,\overline{g}).
	\end{aligned}
	\end{equation*}
where we used \eqref{Sobolev-spaces-versions-2}
in the last equality. The statement follows.
\end{proof}

We conclude the section with pointing out that while we cannot geometrically control the 
spectral Witt condition in case of the spin Dirac operator, for the 
Gauss--Bonnet 
operator on a stratified Witt space, we find $0 \notin \spec S_{k}$ in each iteration step, and 
can scale the spectral gap up by a simple rescaling of the metric to achieve the spectral 
Witt condition.

\appendix
\section{\relax}\label{s.TPO}
\subsection*{Notation} In this section matrices $(a_{ij})_{1\le i,j\le n}$
will often be abbreviated  $(a_{ij})_{ij}$ as long as the size $n$ is clear
from the context. Summations $\sum_{i,j,k,\ldots}$ will always denote a
\emph{finite} sum where all summation indices run independently from $1$
to $n$.

\subsection{Positivity of Matrices of Operators on Hilbert spaces}
\
The following result is based on Lance \cite[Lemma 4.3]{Lan}.

\begin{proposition}\label{p.TPO.1} 
  Let $a=(a_{ij})_{1\leq i, j \leq n}$, $b=(b_{ij})_{1\leq i,j \leq n}$ be
matrices of operators on Hilbert spaces $H_1$, $H_2$, respectively. 
\textit{I.e.,}
$a_{ij}\in \sL(H_1)$, $b_{ij}\in\sL(H_2)$. We may view $a$ as
an element of $\Mat_n(\sL(H_1))$  or of $\sL(H_1^n)$. 
Assume that $a\geq 0$ and $b\geq 0$. Then the following holds.
\begin{thmenum}
\item $(a_{ij}\otimes b_{ij})_{ij}\geq 0$ in 
$\sL((H_1\hot  H_2)^n) = \Mat_n(\sL(H_1\hot H_2))$;
		
\item $\sum_{i,j} a_{ij} \otimes b_{ij} \geq 0$ in 
		$\sL(H_1\hot H_2)$.
		
\item If $a\leq c=(c_{ij})_{ij} \in \sL(H_1^n)$, 
  $b\leq d=(d_{ij})_{ij}\in \sL(H_2^n)$ then
\begin{equation}
  (a_{ij} \otimes b_{ij})_{ij} \leq (c_{ij}\otimes d_{ij})_{ij}.
\end{equation}
\end{thmenum}
\end{proposition}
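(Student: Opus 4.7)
My plan is to prove (i) by exhibiting $(a_{ij}\otimes b_{ij})_{ij}$ explicitly as $U^*U$ for a suitable operator matrix $U$; parts (ii) and (iii) will then follow from (i) by essentially trivial arguments.

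For (i), since $a\in\sL(H_1^n)$ is positive, set $T:=a^{1/2}\in\sL(H_1^n)$ so that $a=T^*T$. Writing $T$ as an operator matrix $T=(t_{ij})_{ij}$ with $t_{ij}\in\sL(H_1)$, the identity $a=T^*T$ reads entry-wise as the (finite) sum $a_{ij}=\sum_k t_{ki}^*\, t_{kj}$. Analogously, $b=S^*S$ with $S=(s_{ij})_{ij}$ and $b_{ij}=\sum_l s_{li}^*\, s_{lj}$. Substituting into the tensor product and using multiplicativity of $\otimes$,
\begin{equation*}
a_{ij}\otimes b_{ij} \ = \ \sum_{k,l} (t_{ki}\otimes s_{li})^*\, (t_{kj}\otimes s_{lj}).
\end{equation*}
Defining the bounded operator $U:(H_1\hot H_2)^n\to(H_1\hot H_2)^{n^2}$ whose $((k,l),j)$-block is $t_{kj}\otimes s_{lj}$, this display is precisely $(a_{ij}\otimes b_{ij})_{ij}=U^*U\geq 0$, proving (i).

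For (ii), I would evaluate the matrix of operators from (i) on the diagonal vector $X:=(x,\ldots,x)\in(H_1\hot H_2)^n$:
\begin{equation*}
\bigl\langle (a_{ij}\otimes b_{ij})_{ij}\, X,\, X\bigr\rangle \ = \ \Bigl\langle \Bigl(\sum_{i,j} a_{ij}\otimes b_{ij}\Bigr) x,\, x \Bigr\rangle \ \geq \ 0,
\end{equation*}
which yields the claim since $x\in H_1\hot H_2$ was arbitrary. For (iii), note that $d\geq b\geq 0$, so (i) applies to the pairs $(c-a,d)$ and $(a,d-b)$, giving $((c_{ij}-a_{ij})\otimes d_{ij})_{ij}\geq 0$ and $(a_{ij}\otimes(d_{ij}-b_{ij}))_{ij}\geq 0$. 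Summing these entry-wise produces $(c_{ij}\otimes d_{ij})_{ij}-(a_{ij}\otimes b_{ij})_{ij}\geq 0$, as required.

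I expect no serious analytic obstacle: the matrix size $n$ is finite, so the sums appearing in the factorization terminate, and $a^{1/2}$ exists by the functional calculus for bounded positive operators. The one nontrivial idea is the Cholesky-style factorization trick in (i), which is the operator-theoretic analogue of the classical Schur product theorem (the Hadamard product of positive semidefinite matrices is positive). The identification $(a_{ij}\otimes b_{ij})_{ij}=U^*U$ makes positivity of this "Hadamard tensor product" manifest, and everything else is book-keeping.
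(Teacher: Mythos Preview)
Your proof is correct and follows essentially the same approach as the paper: factor $a=s^*s$, $b=t^*t$, expand $a_{ij}\otimes b_{ij}$ as $\sum_{k,l}(s_{ki}\otimes t_{li})^*(s_{kj}\otimes t_{lj})$, use the diagonal vector for (ii), and a telescoping sum for (iii). The only cosmetic differences are that you package the sum in (i) as a single $U^*U$ with target $(H_1\hot H_2)^{n^2}$ where the paper instead shows each summand matrix $\bigl((s_{ki}\otimes t_{li})^*(s_{kj}\otimes t_{lj})\bigr)_{ij}$ is positive, and in (iii) you telescope via $(c-a)\otimes d + a\otimes(d-b)$ whereas the paper uses $(c-a)\otimes b + c\otimes(d-b)$; both are equivalent.
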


Note that for $H_1 = H_2  = \C$ this is an elementary statement about positive
semi-definite matrices.

\begin{proof}
\textup{(1)} Write $a=s^{\ast} s$, $s=(s_{ij})$, $b=t^{\ast} t$, $t=(t_{ij})$. 
Thus $a_{ij} = \sum_k s^{\ast}_{ki}s_{kj}$, $b_{ij} = \sum_k t^{\ast}_{ki} t_{kj}$,
and
\begin{equation}
    a_{ij} \otimes b_{ij} 
       = \sum_{k,l} s^{\ast}_{ki}s_{kj} \otimes t_{li}^{\ast} t_{lj}
       = \sum_{k,l} (s_{ki}\otimes t_{li})^{\ast}(s_{kj}\otimes t_{lj}).
\end{equation}
So it suffices to prove that the matrices
\begin{equation}
   \bigl\{ (s_{ki}\otimes t_{li})^{\ast} (s_{kj}\otimes t_{lj})\bigr\}_{ij} \geq 0.
\end{equation}
For fixed $k,l$ let $T_{i}:=s_{ki}\otimes t_{li}$. Then for
$\xi=(\xi_i)_{1\leq i \leq n} \in (H_1\hot  H_2)^n$ we have
\begin{equation}
  \begin{split}
  \langle (T_i^\ast T_j)_{ij} \xi, \xi \rangle 
      &= \Big\langle \Bl\sum_k T_i^\ast T_k \xi_k\Br_i, \xi \Bigr\rangle
              = \sum_{i,k} \langle T_i^\ast T_k \xi_k , \xi_i \rangle\\
      &= \sum_{i,k} \langle T_k \xi_k , T_i \xi_i \rangle 
          = \| (T_i \xi_i)_i \|^2 \geq 0.
  \end{split}
\end{equation}
So indeed the matrix $(T_i^\ast T_j)_{ij}$ is $\geq 0$. 
	
\textup{(2)}        
It suffices to show that if $(f_{ij})_{ij} := (a_{ij}\otimes b_{ij})_{ij} \geq 0$ then 
$\sum_{i,j} f_{ij} \geq 0$. Given $x\in H$ put $y_i = x$, $y = (y_i)_{1\leq i \leq n}\in H^n$. 
Then
\begin{equation}
   \begin{split}
     0\leq \langle (a_{ij})\cdot (y_i), (y_i) \rangle 
         &=\sum_i\Bigl\langle \sum_j  a_{ij}y_j,y_i\Bigr\rangle\\
	 &=\Bigl\langle \sum_{i,j} a_{ij} x, x\Bigr\rangle 
         = \Bigl\langle \Bl\sum_{i,j} a_{ij}\Br x,x\Bigr\rangle.
\end{split}
\end{equation}
	
\textup{(3)} From $c-a\geq 0$ and $d-b\geq 0$ and (1) we infer
that the matrices $\bl (c_{ij}-a_{ij})\otimes b_{ij} \br$
and $\bl c_{ij}\otimes ( d_{ij}-b_{ij}) \br$ are $\ge 0$
and hence  
\begin{equation*}
    0 \leq \bl (c_{ij}-a_{ij})\otimes b_{ij} \br + 
        \bl c_{ij}\otimes ( d_{ij}-b_{ij}) \br
    = ( c_{ij}\otimes d_{ij} )  - ( a_{ij} \otimes b_{ij}).\qedhere
\end{equation*}
\end{proof}

\begin{proposition}\label{p.TPO.2} 
Let $A,B$ be self-adjoint operators in Hilbert spaces $H_1$, $H_2$, 
respectively, and let
\begin{equation}
    A\hot  B:= \text{ closure of } A\otimes_\alg B \text{ on }
    \sD^{\infty}(A)\otimes_\alg \sD^{\infty} (B),
\end{equation}
where $\sD^{\infty}(A):= \bigcap_{s\geq 0} \sD(|A|^{s})$. 
Then $A\hot B$ is self-adjoint and 
$\sD^{\infty}(A)\otimes_\alg \sD^{\infty} (B)
  = \sD^{\infty}(A) \otimes_\alg H_2 \cap
     H_1 \otimes_\alg \sD^{\infty}(B)$.
\end{proposition}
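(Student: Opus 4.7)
The plan is to establish the two claims separately, tackling the algebraic identity first (since it is purely formal) and then deriving self-adjointness via the spectral theorem.

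For the algebraic identity, I will prove the nontrivial inclusion
\[
\sD^\infty(A)\otimes_\alg H_2 \cap H_1\otimes_\alg \sD^\infty(B) \subseteq \sD^\infty(A)\otimes_\alg\sD^\infty(B).
\]
Given $\omega$ in the intersection, write $\omega = \sum_{i=1}^n a_i\otimes b_i$ with $a_1,\ldots,a_n\in \sD^\infty(A)$ chosen linearly independent (we may always pass to such a representation by eliminating dependencies), and simultaneously $\omega = \sum_{j=1}^m c_j\otimes d_j$ with $d_1,\ldots,d_m\in \sD^\infty(B)$ linearly independent. Pick continuous linear functionals $\varphi_i\in H_1^\ast$ with $\varphi_i(a_k)=\delta_{ik}$. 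Applying $\varphi_i\otimes \mathrm{id}$ to both representations yields $b_i = \sum_j \varphi_i(c_j)\, d_j\in \mathrm{span}\{d_1,\ldots,d_m\}\subset \sD^\infty(B)$. Hence $\omega\in \sD^\infty(A)\otimes_\alg \sD^\infty(B)$, and the reverse inclusion is trivial.

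For self-adjointness, the plan is to pass to a multiplication model via the spectral theorem. There exist $\sigma$-finite measure spaces $(X_1,\mu_1)$, $(X_2,\mu_2)$, unitaries $U_j\colon H_j \to L^2(X_j,\mu_j)$, and real-valued measurable functions $f_A$, $f_B$ such that $U_1 A U_1^\ast = M_{f_A}$ and $U_2 B U_2^\ast = M_{f_B}$, where $M_f$ denotes multiplication by $f$. Under the canonical unitary $U_1\hot U_2\colon H_1\hot H_2 \to L^2(X_1\times X_2,\mu_1\otimes\mu_2)$, the algebraic tensor product $A\otimes_\alg B$ becomes the restriction of the multiplication operator $M_{f_A\cdot f_B}$ to simple tensors from $\sD^\infty(A)\otimes_\alg \sD^\infty(B)$. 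Since $M_{f_A f_B}$ with its natural domain $\sD(M_{f_A f_B}) = \{g\in L^2 : f_A f_B\cdot g\in L^2\}$ is self-adjoint, the desired conclusion reduces to proving that the algebraic tensor product is a \emph{core} for $M_{f_A f_B}$.

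The main obstacle is precisely this coreness step. I would approach it as follows. For $N\in\N$ let $E_N=\{|f_A|\le N\}\subset X_1$, $F_N=\{|f_B|\le N\}\subset X_2$, and for $g\in \sD(M_{f_A f_B})$ set $g_N := \chi_{E_N\times F_N}\cdot g$. By dominated convergence $g_N\to g$ and $f_A f_B\, g_N\to f_A f_B\, g$ in $L^2$, so it suffices to approximate each $g_N$ in the graph norm by algebraic tensors from $\sD^\infty(A)\otimes_\alg \sD^\infty(B)$. Using $\sigma$-finiteness, approximate $g_N$ in $L^2$ by finite linear combinations $\sum c_{ij}\,\chi_{P_i}\otimes \chi_{Q_j}$ with $P_i\subset E_N$, $Q_j\subset F_N$ of finite measure; since $|f_A f_B|\le N^2$ on these sets, the convergence is automatic in the graph norm. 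Each $\chi_{P_i}$ lies in $\sD^\infty(A)$ because $f_A^n \chi_{P_i}$ is bounded and supported on a set of finite measure, hence in $L^2$; similarly for $\chi_{Q_j}$. This produces the required approximation, proving that $A\hot B = U^\ast M_{f_A f_B} U$ is self-adjoint, which concludes the argument. A minor preliminary observation is that $A\otimes_\alg B$ is symmetric on $\sD^\infty(A)\otimes_\alg \sD^\infty(B)$, hence closable, so the closure operation is meaningful to begin with.
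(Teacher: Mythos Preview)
Your proof is correct. The algebraic identity argument via dual functionals is essentially the same idea as the paper's (which instead chooses the $\tilde y_j$ orthonormal in $\sD^\infty(B)$ and pairs with $(I+B^2)\tilde y_j$ to extract the $\tilde x_j$); both are ways of ``slicing'' one tensor representation against the other. For self-adjointness, however, you take a genuinely different route: the paper first shows $A\hot I$ is self-adjoint directly by exhibiting its resolvent $(A-\lambda)^{-1}\hot I$, then invokes that $A\hot I$ and $I\hot B$ are commuting self-adjoint operators so their product is essentially self-adjoint on the intersection of the smooth domains, and finally uses the algebraic identity to identify that intersection with $\sD^\infty(A)\otimes_\alg\sD^\infty(B)$. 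Your spectral-theorem reduction to multiplication operators and the explicit core argument with truncations $\chi_{E_N\times F_N}$ is more concrete and arguably more transparent; it also makes the role of the core explicit rather than hidden in the ``commuting self-adjoints have essentially self-adjoint product'' step. The only caveat is that your appeal to a $\sigma$-finite multiplication model tacitly assumes separability of $H_1,H_2$, which the paper does not; this is harmless in the intended application but worth noting.
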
	
	
\begin{proof} It is straightforward to see that $A\otimes B$
is symmetric on $\sD^{\infty}(A)\otimes_\alg \sD^{\infty} (B)$
and hence $A\hot B$ is a symmetric closed operator. It remains
to show self-adjointness which is equivalent to the denseness of
the ranges $\ran (A\hot B\pm i I)$.

First we prove the statement for $B = I$ being the identity on $H_2$.
Then the graph norm of $A\otimes I$ on $\sD^{\infty}(A)\otimes H_2$ is the Hilbert 
space tensor norm for $\sD(A)\hot  H_2$. Hence $\sD(A\hot  I) = \sD(A)\hot  H_2$. 
The resolvent of $A\hot  I$ is obviously $(A\hot  I - \lambda I  \hot  I)^{-1} = (A-\lambda I )^{-1}\hot  I$.
Thus the denseness of $\ran (A\hot I \pm I\hot I)$ follows from the denseness
of $\ran (A\pm I)$. Hence $A\hot  I$ is self-adjoint. 
	
For general $B$ we now know that $A\hot  I$ and $I\hot  B$ 
are commuting self-adjoint operators. Hence
$(A\hot  I)\cdot (I \hot  B)$ is essentially self-adjoint on
\begin{equation}
  \sD^{\infty}(A) \otimes_\alg H_2 \bigcap H_1 \otimes_\alg \sD^{\infty}(B).
\end{equation}
It remains to see that the latter equals 
$\sD^{\infty}(A) \otimes_\alg \sD^{\infty}(B)$. Because then
$(A\otimes I)\cdot (I\otimes B) = A\otimes_\alg B$ and we conclude
the essential self-adjointness of $A\otimes_\alg B$.

To this end consider $\xi\in
 \sD^{\infty}(A) \otimes_\alg H_2 \bigcap H_1 \otimes_\alg \sD^{\infty}(B)$.
Then there exist
$x_i\in \sD^{\infty}(A)$, $y_i\in H_2$, 
$\tilde{x}_i\in H_1$, $\tilde{y}_i\in \sD^{\infty}(B)$, $i=1,\ldots n$ 
such that
\begin{equation}
    \sum_i x_i \otimes y_i = \xi = \sum_i \tilde{x}_i \otimes \tilde{y}_i,
\end{equation}
where without loss of generality we may assume that $\tilde{y}_i$ is 
orthonormal in $\sD^{\infty}(B)$. There is an obvious pairing
\begin{equation}
    \bl H_1 \otimes_\alg H_2 \br \times H_2 \to H_1,
\end{equation}
induced by the $H_2$ scalar product. Pick an index $j$. Then on the one 
hand
\begin{equation}
    \Bigl\langle\sum_i \tilde{x}_i \otimes \tilde{y}_i, (I+B^2)\tilde{y}_j 
	\Bigr\rangle = \sum_i \tilde{x}_i \langle \tilde{y}_i, \tilde{y}_j 
	\rangle_B = \tilde{x}_j,
\end{equation}
and on the other hand
\begin{equation}
  \begin{split}
    \Bigl\langle\sum_i \tilde{x}_i \otimes \tilde{y}_i, (I+B^2)\tilde{y}_j \Bigr\rangle 
        &= \Bigl\langle \sum_i x_i \otimes y_i , (I+B^2) \tilde{y}_j\Bigr\rangle\\
	&= \sum_i x_i\, \langle y_i, (I+B^2) \tilde{y}_j\rangle \in \sD^\infty(A).
   \end{split}
\end{equation}
This proves $\tilde{x}_j\in \sD^{\infty}(A)$ for any $j=1,\ldots,n$ and 
the statement follows. 
\end{proof}
	
\begin{proposition}\label{p.TPO.3} 
Let $A,C\geq 0$ be self-adjoint operators in $H_1$; $B,D\geq 0$ 
self-adjoint operators in $H_2$. If $A\leq C$, $\sD(C)\subset \sD(A)$ and 
$B\leq D$, $\sD(D)\subset\sD(B)$ then
\begin{equation}
  A\hot  B \leq C\hot  D,\quad  \sD(C\hot  D) \subset \sD(A\hot  B).
\end{equation}
\end{proposition}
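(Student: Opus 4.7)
The plan is to reduce to the bounded case, extend via Yosida approximation and the L\"owner--Heinz theorem, and pass to the limit by monotone convergence of quadratic forms in the joint spectral representation of the commuting pair $(A \otimes I,\; I \otimes B)$. For bounded $0 \leq a \leq c$ in $\sL(H_1)$ and $0 \leq b \leq d$ in $\sL(H_2)$, the algebraic identity
\[
c \otimes d - a \otimes b \;=\; (c - a) \otimes b \;+\; c \otimes (d - b)
\]
decomposes the difference into a sum of tensor products of nonnegative bounded operators; each summand is nonnegative by the $n = 1$ case of Proposition \ref{p.TPO.1} (alternatively, writing $\alpha = s^\ast s$, $\beta = t^\ast t$ gives $\alpha \otimes \beta = (s \otimes t)^\ast (s \otimes t) \geq 0$). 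Hence $a \otimes b \leq c \otimes d$, and the domain inclusion is vacuous.

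For the general unbounded case I would introduce the Yosida regularizations $A_n := A(I + A/n)^{-1}$ and $B_n, C_n, D_n$ analogously, all bounded and nonnegative self-adjoint. Since $t \mapsto t(1 + t/n)^{-1}$ is operator monotone on $[0, \infty)$, the L\"owner--Heinz theorem promotes $A \leq C$ and $B \leq D$ to $A_n \leq C_n$ and $B_n \leq D_n$, so the bounded case yields $A_n \otimes B_n \leq C_n \otimes D_n$ for every $n$. To pass to the limit I would exploit the joint spectral resolutions of the commuting pairs $(A \otimes I,\; I \otimes B)$ and $(C \otimes I,\; I \otimes D)$ supplied by Proposition \ref{p.TPO.2}: in these representations $A_n \otimes B_n$ acts as multiplication by $\phi_n(\lambda)\phi_n(\mu)$, with $\phi_n(t) = t(1 + t/n)^{-1} \uparrow t$ pointwise and monotonically, and similarly for $C_n \otimes D_n$. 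Monotone convergence of quadratic forms then gives $\langle (A \hot B)\xi, \xi \rangle = \lim_n \langle (A_n \otimes B_n)\xi, \xi \rangle$, with value $+\infty$ off the form domain, and analogously for $C \hot D$. Applying the bounded-case inequality to any $\xi \in \sD((C \hot D)^{1/2})$ and taking $n \to \infty$ delivers both the form inequality $A \hot B \leq C \hot D$ and the form-domain inclusion $\sD((C \hot D)^{1/2}) \subset \sD((A \hot B)^{1/2})$.

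The main obstacle will be the operator-domain inclusion $\sD(C \hot D) \subset \sD(A \hot B)$, which is strictly stronger than the form-domain inclusion since $t \mapsto t^2$ is not operator monotone on $[0, \infty)$; this is precisely where the separate hypothesis $\sD(C) \subset \sD(A)$ enters. By the closed graph theorem it is equivalent to the boundedness of $K_1 := A(C + I)^{-1}$ on $H_1$ and of $K_2 := B(D + I)^{-1}$ on $H_2$. On the core $\sD^\infty(C) \otimes_\alg \sD^\infty(D)$ for $C \hot D$ (Proposition \ref{p.TPO.2}) one has the formal factorization
\[
A \otimes B \;=\; (K_1 \otimes K_2) \cdot \bigl((C + I) \otimes (D + I)\bigr),
\]
and $K_1 \otimes K_2$ extends to a bounded operator on $H_1 \hot H_2$ of norm $\leq \|K_1\|\,\|K_2\|$. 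Expanding $(C + I) \hot (D + I) = C \hot D + C \otimes I + I \otimes D + I$ and dominating the mixed terms by $C \hot D$ plus bounded lower-order contributions via the already-proved form inequalities applied in the variable not being differentiated, I would obtain an estimate $\|(A \hot B)\xi\| \leq M(\|(C \hot D)\xi\| + \|\xi\|)$ on the core; closedness of $A \hot B$ then extends the estimate to all of $\sD(C \hot D)$, placing it inside $\sD(A \hot B)$.
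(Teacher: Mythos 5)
Your proof of the inequality $A\hot B\le C\hot D$ is correct but follows a genuinely different route from the paper's. The paper works entirely on the algebraic core $\sD^{\infty}(C)\otimes\sD^{\infty}(D)$: it forms the Gram-type matrix inequalities $0\le(\langle Ax_i,x_j\rangle)_{ij}\le(\langle Cx_i,x_j\rangle)_{ij}$ and $0\le(\langle By_i,y_j\rangle)_{ij}\le(\langle Dy_i,y_j\rangle)_{ij}$ and feeds them into the matrix-positivity statement of Proposition \ref{p.TPO.1}, obtaining $\langle(C\otimes D)\xi,\xi\rangle-\langle(A\otimes B)\xi,\xi\rangle\ge0$ in a single computation, with no functional calculus and no approximation. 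Your route (bounded case via the same decomposition $(c-a)\otimes b+c\otimes(d-b)$, then Yosida regularization, L\"owner--Heinz, and monotone convergence in the joint spectral representation furnished by Proposition \ref{p.TPO.2}) is also valid, and it yields the form inequality on the whole form domain of $C\hot D$ rather than only on the core; the price is heavier machinery.

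The gap is in your argument for $\sD(C\hot D)\subset\sD(A\hot B)$. The factorization $A\otimes B=(K_1\otimes K_2)\bigl((C+I)\otimes(D+I)\bigr)$ on the core is fine, but the subsequent step of ``dominating the mixed terms'' cannot be carried out: there is no estimate of the form $\|((C+I)\otimes(D+I))\xi\|\le M\bigl(\|(C\hot D)\xi\|+\|\xi\|\bigr)$. In the joint spectral picture this would require $(\lambda+1)(\mu+1)\le M(\lambda\mu+1)$ for all $\lambda,\mu\ge0$, which fails as $\lambda\to\infty$ with $\mu$ near $0$; concretely, with $C$ the diagonal operator with entries $n$ and $D$ the diagonal operator with entries $n^{-2}$, the vector $\xi=\sum_n n^{-1}e_n\otimes e_n$ lies in $\sD(C\hot D)$ but not in $\sD(C\otimes I)$. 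Thus your chain of estimates only proves $\sD((C+I)\hot(D+I))\subset\sD(A\hot B)$, and $\sD((C+I)\hot(D+I))=\sD(C\hot D)\cap\sD(C\otimes I)\cap\sD(I\otimes D)$ is in general strictly smaller than $\sD(C\hot D)$. To close the argument one needs a relative bound of $A\otimes B$ against $C\otimes D$ itself, not against $(C+I)\otimes(D+I)$. (For what it is worth, the paper itself dispatches the domain inclusion with a one-line reference to Proposition \ref{p.TPO.2}, and in its only application, Lemma \ref{p.ISHS.8}, one has $A=B=I$, where the inclusion is trivial.)
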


\begin{proof}
The domain inclusion is clear from Proposition \ref{p.TPO.2}. To prove
the inequality, let $\sum_{i=1}^n x_i \otimes y_i
\in \sD^{\infty}(C)\otimes_\alg \sD^{\infty}(D)$ be given. Consider
the matrices $(\langle A x_i, x_j\rangle)_{ij}$, 
$(\langle C x_i, x_j\rangle)_{ij}$, 
$(\langle B y_i, y_j\rangle)_{ij}$,
and  $(\langle D y_i, y_j\rangle)_{ij}$. For complex 
numbers $\lambda_i$ we have
\begin{equation}
  \begin{split}
    \sum \overline{\lambda}_i \langle A x_i, x_j\rangle \lambda_j 
        &= \langle A \sum \lambda x_i , \lambda_i x_i\rangle \geq 0\\
 \langle A \sum \lambda x_i , \lambda_i x_i\rangle 
 	&\leq \langle C \sum \lambda x_i , \lambda_i x_i\rangle=\sum 
	\overline{\lambda}_i \langle C x_i, x_j\rangle \lambda_j.
\end{split}
\end{equation}
Thus we have the matrix inequalities
\begin{equation}
	0 \leq \left(\langle A x_i, x_j\rangle\right)_{ij} \leq 
		\left(\langle C x_i, x_j\rangle\right)_{ij}
\end{equation}
and analogously
\begin{equation}
  0 \leq \left(\langle B y_i, y_j\rangle\right)_{ij} \leq 
	\left(\langle D y_i, y_j\rangle\right)_{ij}.
\end{equation}
Proposition \ref{p.TPO.1} implies
\begin{equation*}
  \begin{split}
    0 &\leq \sum_{i,j} \langle (C-A) x_i, x_j\rangle \langle D y_i, 
        	y_j\rangle + \sum_{i,j} \langle A x_i, x_j\rangle\langle (D-B) y_i, 
	             y_j\rangle\\
    &= \sum_{i,j} \langle C x_i, x_j\rangle \langle D y_i, y_j\rangle - 
         	\langle A x_i, x_j\rangle \langle B y_i, y_j\rangle\\
    &= \Bigl\langle (C\otimes D) \sum x_i \otimes y_i, \sum x_i \otimes 
   	y_i\Bigr\rangle - \Bigl\langle (A\otimes B) \sum x_i \otimes y_i, 
        \sum x_i \otimes y_i\Bigr\rangle,
\end{split}
\end{equation*}
and hence $A\hot B \le C\hot D$.
\end{proof}

\subsection{Uniform asymptotic expansions of modified Bessel functions}

According to Olver \cite[p. 377 (7.16), (7.17)]{Olv:AAS}, we may write
for any $\mu > 0$ and $x>0$
\begin{equation}\label{full-expansion-olver}
\begin{split}
    I_\mu (\mu x)  &= \frac{1}{\sqrt{2\pi\mu}}\cdot 
	\frac{e^{\mu \cdot\eta(x)}}{(1+x^2)^{1/4}} 
	\left(\sum_{j=0}^{n-1} \frac{U_j(p(x))}{\mu^j} + \eta_{n,1}(\mu, x)\right) 
	\frac{1}{1+\eta_{n,1}(\mu, \infty)}, \\
	K_\mu (\mu x) &= \sqrt{\frac{2\pi}{\mu}}\cdot 
	\frac{e^{-\mu \cdot\eta(x)}}{(1+x^2)^{1/4}} 
	\left(\sum_{j=0}^{n-1} (-1)^j \frac{U_j(p(x))}{\mu^j} + \eta_{n,2}(\mu, 
	x)\right)
\end{split}
\end{equation}
where $p(x)= \sqrt{1+x^2}$, $\eta(x) = p(x) +\ln \frac{x}{1+p(x)}$
and $U_j(p)$ are iteratively defined polynomials in $p$ with $U_0 \equiv 1$. 
By Olver \cite[p. 377 (7.14), (7.15)]{Olv:AAS}, the error terms $\eta_{n,1}$ and 
$\eta_{n,2}$ admit the following bounds
\begin{equation}\label{variation-bounds}
\begin{split}
    |\eta_{n,1}(\mu, x)| &\leq 2 \exp \left(\frac{2\mathscr{V}_{(1,p(x))}(U_1)}{\mu}\right)
    \frac{\mathscr{V}_{(1,p(x))}(U_n)}{\mu^n}, \\ 
    |\eta_{n,2}(\mu, x)| &\leq 2 \exp \left(\frac{2\mathscr{V}_{(0,p(x))}(U_1)}{\mu}\right)
    \frac{\mathscr{V}_{(0,p(x))}(U_n)}{\mu^n}
\end{split}
\end{equation}
where $\mathscr{V}_{(a,b)}(f)$ denotes the total variation of a differentiable function $f$ 
along an interval $(a,b)$. In case of complex-valued arguments $x$, one takes here the 
variation along $\eta(x)$-progressive paths. However, here $x, p(x), \eta(x)$ are
all real-valued, and $\eta(x)$ is monotonously increasing as $x\to \infty$ by \eqref{eta-grow}.

Since $p((0,\infty)) = (0,1)$, we may take in \eqref{variation-bounds} 
variation over $(0,1)$ for both error terms. Since for any $j\in \N$ the total variations
$\mathscr{V}_{(0,1)}(U_j)$ are taken along finite 
paths and since $U_j$ are polynomials, we conclude that for any $n\in \N_0$
\begin{equation}\label{variation-bounds-2}
 \eta_{n,1}(\mu, x) = O(\mu^{-n}),  \quad
 \eta_{n,2}(\mu, x) = O(\mu^{-n}), 
 \ \textup{as} \ \mu \to \infty. 
\end{equation}
uniformly in $x\in (0,\infty)$. Hence the expansions 
\eqref{full-expansion-olver} are uniform in $x\in (0,\infty)$ as well.

\bibliography{local}
\bibliographystyle{amsalpha-lmp}

\end{document}